\newtheorem{thm}{Theorem}[section]
\newtheorem{theorem}[thm]{Theorem}
\newtheorem{corollary}[thm]{Corollary}
\newtheorem{lemma}[thm]{Lemma}
\newtheorem{prop}[thm]{Proposition}
\newtheorem{conjecture}[thm]{Conjecture}
\newtheorem{thm-dfn}[thm]{Theorem-Definition}
\newtheorem{remark}[thm]{Remark}
\newtheorem{definition}[thm]{Definition}
\numberwithin{equation}{section}
\newenvironment{rouge}
{\relax\color{red}}
{\hspace*{.3ex}\relax}
\newenvironment{bluet}
{\relax\color{blue}}
{\hspace*{.3ex}\relax}
\newcommand{\br}{\begin{rouge}}
\newcommand{\er}{\end{rouge}}
\newcommand{\bb}{\begin{bluet}}
\newcommand{\eb}{\end{bluet}}
\newcommand{\nc}{\newcommand}
\newcommand{\cD}{{\mathcal D}}
\newcommand{\cA}{{\mathcal A}}
\newcommand{\cF}{{\mathcal F}}
\newcommand{\cN}{{\mathcal N}}
\newcommand{\cH}{{\mathcal H}}
\newcommand{\cL}{{\mathcal L}}
\newcommand{\cM}{{\mathcal M}}
\newcommand{\bC}{{\mathbb C}}
\newcommand{\bZ}{{\mathbb Z}}
\newcommand{\bQ}{{\mathbb Q}}
\newcommand{\bN}{{\mathbb N}}
\newcommand{\bG}{{\mathbb G}}
\newcommand{\La}{{\mathfrak{a}}}
\newcommand{\Lt}{{\mathfrak{t}}}
\newcommand{\Lg}{{\mathfrak{g}}}
\newcommand{\Ll}{{\mathfrak{l}}}
\newcommand{\Lp}{{\mathfrak{p}}}
\newcommand{\Ls}{{\mathfrak{s}}}
\newcommand{\ft}{{\mathfrak{t}}}
\newcommand{\fg}{{\mathfrak g}}
\newcommand{\fF}{{\mathfrak{F}}}
\newcommand{\fa}{{\mathfrak{a}}}
\nc{\ot}{\otimes}
\nc{\on}{\operatorname}
\nc{\Lie}{{\operatorname{Lie}}}
\nc{\oh}{{\operatorname{H}}}
\nc{\gr}{{\operatorname{gr}}}
\nc{\rk}{{\operatorname{rank}}}
\nc{\codim}{{\operatorname{codim}}}
\nc{\img}{{\operatorname{Im}}}
\nc{\IC}{{\operatorname{IC}}}
\nc{\bI}{{\mathbf 1}}
\nc{\lp}{{\left(}}
\nc{\rp}{{\right)}}
\newcommand{\beqn}{\begin{equation*}}
\newcommand{\eeqn}{\end{equation*}}
\newcommand{\beq}{\begin{equation}}
\newcommand{\eeq}{\end{equation}}
\newcommand{\bega}{\begin{gathered}}
\newcommand{\eega}{\end{gathered}}
\newcommand{\bern}{\begin{eqnarray*}}
\newcommand{\eern}{\end{eqnarray*}}
\newcommand{\ber}{\begin{eqnarray}}
\newcommand{\eer}{\end{eqnarray}}
\newcommand{\inv}{{\mathbin{/\mkern-4mu/}}}
\newcommand{\baseRing}{\bC}
\newcommand{\braid}{b}
\newcommand{\basepta}{a_0}
\newcommand{\barbasepta}{\bar{a}_0}
\newcommand{\Cartan}{\fa}
\nc{\diag}{{\on{diag}}}  
\nc{\Ts}{{T_{\mathbf{s}}}}
\nc{\Tds}{{\check T_{\mathbf{s}}}}
\begin{document}
 
\title[Character sheaves for graded Lie algebras: Stable Gradings]{Character Sheaves for Graded Lie Algebras: Stable Gradings}

\author{Kari Vilonen}\address{School of Mathematics and Statistics, University of Melbourne, VIC 3010, Australia, also Department of Mathematics and Statistics, University of Helsinki, Helsinki, Finland}
\email{kari.vilonen@unimelb.edu.au, kari.vilonen@helsinki.fi}
\thanks{KV was supported in part by the ARC grants DP150103525 and DP180101445  and the Academy of Finland}

\author{Ting Xue}
\address{School of Mathematics and Statistics, University of Melbourne, VIC 3010, Australia, also Department of Mathematics and Statistics, University of Helsinki, Helsinki, Finland} 
\email{ting.xue@unimelb.edu.au}
\thanks{TX was supported in part by the ARC grant DP150103525.}

\begin{abstract}In this paper we construct full support character sheaves for stably graded Lie algebras. Conjecturally these are precisely the cuspidal character sheaves. Irreducible representations of Hecke algebras associated to complex reflection groups at roots of unity enter the description. We do so by   analysing the Fourier transform of the nearby cycle sheaves constructed in \cite{GVX2}.
\end{abstract}

\date{Dec 15, 2020}

\maketitle

\tableofcontents

\section{Introduction}

In this paper we initiate the study of character sheaves for general $\bZ/m\bZ$-graded Lie algebras. The goal is to determine the cuspidal character sheaves, i.e., the character sheaves which cannot be obtained (as a direct summand) by parabolic induction from smaller graded Lie algebras.

Let us consider a semisimple algebraic group $G$ over complex numbers. Character sheaves were introduced by Lusztig in~\cite{Lu2}. In this paper we consider a generalization of this situation, namely, the case of graded Lie algebras. 
Let $\theta$ be an automorphism of $G$ of finite order $m$. It induces a grading on the Lie algebra
\beqn
\Lg = \bigoplus_{i\in\bZ/m\bZ} \fg_i
\eeqn
and $K=G^\theta$ acts on the constituents of this decomposition.   Graded Lie algebras were initially studied by Vinberg~\cite{Vin}. They arise naturally in various context, for example, from the Moy-Prasad filtrations in the theory of $p$-adic groups. 

For simplicity we will assume that $G$ is simply connected so that $K$ is connected. We can identify $\fg_1^*$ with $\fg_{m-1}=\fg_{-1}$.  In~\cite{MV} character sheaves on a group are characterized by the property that their singular support is nilpotent.  We define them similarly here as irreducible $K$-equivariant  perverse sheaf on $\fg_1$ with nilpotent singular support.  Thus, a character sheaf is the Fourier transform of an irreducible $K$-equivariant  perverse sheaf on $\cN_{-1}=\cN \cap \fg_{-1}$, where we have written $\cN$ for the nilpotent cone in $\fg$. 

Recall that $\Lg_1$ possesses a Cartan subspace $\fa\subset \fg_1$ which is a maximal abelian subspace consisting of semisimple elements. We have the adjoint quotient map
\beqn
f: \Lg_1 \to \Lg_1 \inv K \cong \La \slash W_\La \,,
\eeqn
where $W_\fa=N_K(\fa)/Z_K(\fa)$ is the little Weyl group which is a complex reflection group. In what follows the ``nearby cycle construction" refers to nearby cycles with respect to the entirely analogous adjoint quotient map for $\Lg_{-1}$. By the rank of the graded Lie algebra we mean the dimension of the Cartan subspace $\fa$, and by the ``semisimple" rank we mean $\dim\fa-\dim\fa^K$.

The case of involutions was considered in~\cite{VX1} where we gave a complete classification of character sheaves for classical groups. The case of higher order gradings is significantly more complicated as the Weyl groups arising in this situation are no longer Coxeter groups but rather complex reflection groups. These complexities are already present in the geometric prequel~\cite{GVX2} to this paper. Hence, we will not attempt to give as complete an answer as in~\cite{VX1} in general but will rather describe the cuspidal character sheaves. 

We recall that Lusztig has described the character sheaves in the case when there is no grading, i.e., in the case that $G$ acts on the Lie algebra $\fg$ by the adjoint action in~\cite{Lu3,Lu1}. We call this setting the ungraded case. This case can also be viewed as arising from the involution on $G\times G$ which switches the two factors. The case of rank zero graded Lie algebras was also done by Lusztig in~\cite{Lu4}. In this case the character sheaves themselves also have nilpotent support. Lusztig describes the cuspidal ones in this setting in terms of the cuspidals in the ungraded case. 

For automorphisms of finite order the situation is different as was observed in~\cite{VX1} in the case of involutions. Let us consider $Z(G)^\theta$, where $Z(G)$ is the center of $G$. The $Z(G)^\theta$ acts on a character sheaf via a character. We expect the cuspidal character sheaves to have as large a support as allowed by the central character. Note that the character sheaves can be broken into ``blocks" with respect to this action. 

We further expect that the cuspidal character sheaves in the case of finite order $\theta$ come from nearby cycle constructions as explained in~\cite{GVX1,GVX2} and a generalization of these constructions. In this manner we obtain character sheaves associated to irreducible representations of Hecke algebras of (complex) reflection groups with specific parameters. Note that only certain gradings afford cuspidal character sheaves. In this paper we consider stable gradings and, still conjecturally, produce all cuspidal character sheaves in this context. In the sequel~\cite{VX2} we consider certain very special unstable gradings. 
We will make use of a generalization of the nearby cycle construction of~\cite{GVX2} which starts with, instead of a local system, an intersection homology sheaf which is constructed from a  character sheaf arising from rank zero gradings. In this context we also obtain cuspidal character sheaves associated to irreducible representations of Hecke algebras of (complex) reflection groups with specific parameters as above. In addition, the central character can impose a very strong condition on the support of the character sheaf. For example, it can force the support to be nilpotent. Such character sheaves are isolated and can be studied via their central character. 

So far we have explained constructions that produce character sheaves. To show that we have obtained all cuspidal character sheaves we will eventually relate our study to double affine Hecke algebras (DAHA's) in~\cite{VX3}. The idea is the following. In a series of papers~\cite{LY1,LY2,LY3} Lusztig and Yun relate the irreducible $K$-equivariant  perverse sheaves on $\cN_{-1}$ to representations of certain trigonometric DAHA's. On the other hand the character sheaves can be related, via the KZ functor, to representations of certain rational DAHA's. In~\cite{VX3} we will explain, at the moment up to some conjectures, how we have, in the context of cuspidal character sheaves, a bijection between the (full support) irreducible representations of the rational DAHA's arising from our context and the finite dimensional irreducible representations of the trigonometric DAHA's of Lusztig-Yun.

As we already pointed out, in this paper we consider the case of stable gradings. A grading on $\Lg$ is stable if $\fg_1$ is $K$-stable in the sense of invariant theory so that, in particular, $I=Z_K(\fa)$ is finite. The pair $(\fg_1,K$) constitutes a {\it polar} representation and we apply the main result of~\cite{GVX2} to this situation. To a character $\chi$ of $I$ we associate a local system $\cL_\chi$ on a regular semisimple $K$-orbit in $\fg_{-1}$. Via a nearby cycle construction we take the limit of $\cL_\chi$ to the nilpotent cone and thus obtain a perverse sheaf $P_\chi\in \on{Perv}_{K}(\cN_{-1})$. Let us write $\fF$  for the Fourier transform. Following~\cite{GVX2} we have: 
\beqn
\fF P_\chi \ = \ \on{IC}(\fg_1,\cM_\chi)\,,
\eeqn
an intersection homology sheaf on $\fg_1$ associated to a local system $\cM_\chi$. Note that we can also think of $\cM_\chi$ as the local system obtained by microlocalizing $P_\chi$ to the conormal bundle at the origin. 

In~\cite{GVX2} we prove a general theorem which allows one to determine $\cM_\chi$ by reducing it to (semisimple) rank one calculations. Let us note that a priori $\cM_\chi$ is a representation of a semidirect product $I \rtimes B_{W_\fa}$ where $B_{W_\fa}$ is the braid group of the little Weyl group $W_\fa$. Obtaining such a semidirect product decomposition depends on a choice of a Kostant slice which, in the terminology of~\cite{GVX2} provides us with a regular splitting. The main point of~\cite{GVX2} is to show that the action of $B_{W_\fa}$ is obtained  by induction from a Hecke algebra  $\cH_{W_{\fa,\chi}^0}$ of a particular subgroup $W_{\fa,\chi}^0$ of the stabilizer group $W_{\fa,\chi}=\on{Stab}_{W_\fa} (\chi)$. The reason we have to pass to $W_{\fa,\chi}^0$ is that the group $W_{\fa,\chi}$ is not necessarily a complex reflection group. The subgroup ${W_{\fa,\chi}^0}$ is the maximal subgroup which is a complex reflection subgroup of $W_{\fa,\chi}$. 

The construction of the Hecke algebra proceeds as follows. Each (distinguished) primitive reflection $s\in W_\fa$ gives rise to a (stable) grading of semisimple rank one and a nearby cycle sheaf $P_{\chi_s}$. In this case the braid group is just $\bZ$ and the microlocal monodromy has a minimal polynomial $R_{\chi,s}$ whose degree we denote by $n_s$. The braid generator $\sigma_s$ (which gives rise to $1\in \bZ$) of the braid group does not necessarily lie in  $B_{W_{\fa,\chi}^0}$. We denote by $e_s$ the smallest integer such that $s^{e_s}\in W_{\fa,\chi}$. We show that there is a polynomial $\bar R_{\chi,s}(x)$ so that
$R_{\chi,s}(x)=\bar R_{\chi,s}(x^{e_s})$. We then set 
\beqn
\cH_{W_{\fa,\chi}^0} \ = \ \bC[B_{W_{\fa,\chi}^0} ] / \langle  \bar R_{\chi,s}(\sigma_s^{e_s})\mid \text{$s$ a (distinguished) primitive reflection}   \rangle.
\eeqn
In this paper we determine the polynomials $R_{\chi,s}(x)$ explicitly in the case of stable gradings. We do so with a few exceptions which arise from exceptional groups. At the end of the paper we write down these Hecke algebras completely explicitly in classical types. These explicit calculations have served as an invaluable guide during this project. For arbitrary gradings of rank one the invariant polynomial (there is only one invariant because we are in rank one!) appears to be very complicated. Thus, at least at first sight, it does not seem feasible to try to compute the polynomial $R_{\chi,s}(x)$ explicitly in general. However, in the stable grading case the situation is quite manageable as the nilpotent cone is given by a normal crossings divisor (except in some exceptional cases). The case of the special unstable gradings that we plan to treat in~\cite{VX2} is manageable for similar reasons. It is striking that in order to describe the cuspidal character sheaves (and hence all the character sheaves) we are precisely in a situation allowing an explicit calculation. 

As in~\cite{VX1} the full support constituents of the sheaf $\on{IC}(\Lg_1,\cM_\chi)$ are character sheaves. We conjecture that these are precisely the cuspidal character sheaves in our setting.

We have also been guided by the endoscopic point of view. The character $\chi$ of $I$ gives rise to an element in the dual torus $\check T$ of a split torus $T$ of $G$. Thus we can consider the stabilizer group $\check G(\chi)$ of the dual group $\check G$. The grading $\theta$ induces a (stable) grading $\check \theta$ of $\check G$. It is not too difficult to see that $W_{\fa,\chi} = W(\check G(\chi)^{\check\theta},\check T)$. We now set
$W_{\fa,\chi}^{en}= W((\check G(\chi)^{\check\theta})^0,\check T)$. As a little Weyl group  $W_{\fa,\chi}^{en}$ is a complex reflection group and we show that $W_{\fa,\chi}^{en}\subset W_{\fa,\chi}^0$. In the description of $\cM_\chi$ we can replace $W_{\fa,\chi}^0$ by $W_{\fa,\chi}^{en}$. We claim that $W_{\fa,\chi}^{en}$ is a minimal complex reflection subgroup of $W_{\fa,\chi}$ with this property. For a distinguished reflection $s\in W_\fa$, let us write $d_s$ for the smallest integer such that $s^{d_s}\in W_{\fa,\chi}^{en}$. Then $e_s | d_s$. We further claim that there are polynomials $\bar{\bar{R}}_{\chi,s}$ such that $R_{\chi,s}(x)=\bar{\bar{R}}_{\chi,s}(x^{d_s})$ and that $d_s$ is the largest power that can be extracted in this manner from $R_{\chi,s}(x)$. These facts follow for classical (and some other) groups by explicit case-by-case calculation. Furthermore, we expect that we are in an endoscopic situation, i.e., that the Hecke algebra $\cH_{W_{\fa,\chi}^{en}}$ associated to the triple $(G,\theta, \chi)$ is the Hecke algebra associated to $(\check G(\chi)^0,\check\theta,\on{triv})$.

The paper is organized as follows. In section 2 we recall some results about stable gradings and recall the main theorem of~\cite{GVX2}. In section 3 we discuss how the case of general reductive $G$ can be reduced to the case of an almost simple simply connected $G$. This is important as even if we start with  an almost simple simply connected $G$ in the rank one reduction we first obtain a reductive group. Section 4 contains a general treatment of the stable rank one situation. In section 5 we present an endoscopic point of view which has guided us throughout our project. In section 6 we determine the polynomials $R_{\chi,s}$ explicitly and finally, in section 7, we give a rather explicit description of cuspidal character sheaves for stable gradings in classical types. That these are all cuspidals is still conjectural.

{\bf Acknowledgements.} We thank the Research Institute for Mathematical Sciences, Kyoto University, Japan, for hospitality, support, and a nice research environment. We thank Misha Grinberg, Cheng-Chiang Tsai and Zhiwei Yun for many inspiring and helpful discussions. T.X. thanks George Lusztig for his continuous encouragement and his interest in this work.

\section{Preliminaries}
\label{sec-preliminaries}

We work throughout over the complex numbers. However, we expect that our statements can be extended to other situations with some restrictions on the characteristic of the field. For perverse sheaves we use the conventions of~\cite{BBD}.

\subsection{Stable Gradings}\label{ssec-stable gradings}
Let $G$ be a connected reductive algebraic group over the complex numbers and let $\theta: G \to G$ be a finite order automorphism of $G$ of order $m$. We write: 
\beqn
G^\theta = K.
\eeqn
Let $\Lg = \on{Lie} G$. Then we have the following $\bZ/m\bZ$-grading on $\Lg$ induced by $\theta$: 
\beqn
\Lg = \bigoplus_{i\in\bZ/m\bZ} \fg_i
\eeqn
where $\Lg_i$ is the $\zeta_m^i$-eigenspace of $d\theta$ for a fixed primitive $m$-th root of unity $\zeta_m=e^{2\pi\mathbf{i}/m}$, $\mathbf{i}=\sqrt{-1}$. 
We say that $\theta$ or the corresponding $\bZ/m\bZ$-grading is {\it stable} if the $K$-action on $\fg_1$ is stable in the sense of invariant theory, i.e., there exists an element in $\Lg_1$ whose $K$-orbit is closed and whose stabilizer in $K$ is finite. When the Lie algebra $\Lg$ is simple the stable gradings have been classified in \cite{RLYG}. For an explicit description of stable gradings for classical types, see \cite[\S6-8]{Y}.

We fix a Cartan subspace $\La$ of $\Lg_1$; i.e., $\La \subset \Lg_1$ is a maximal abelian subspace consisting of semisimple elements. 
An element $x \in \Lg_1$ is called regular if $\dim Z_K (x) \leq \dim Z_K (x')$ for all $x' \in \Lg_1$. 
We write $\Lg_1^{rs}$ for the set of regular semisimple elements of $\Lg_1$, and we let: 
\beqn
\La^{rs} = \La \cap \Lg_1^{rs}.
\eeqn
When the grading is stable, then there are elements in  $\fa$ which are regular semisimple elements of $\Lg$, i.e., $\La^{rs}=\La\cap\Lg^{rs}$. 

From now on we assume that $\theta$ induces a stable grading on $\Lg$. In section~\ref{Isogenies} we will explain how the case of a reductive $G$ can be reduced to the case of a simply connected $G$. Thus, in the main body of the text we will work under this hypothesis. This ensures, in particular, that $K$ is connected (\cite{St2}). It is also explained in section~\ref{Isogenies} that we can limit ourselves to the case of an almost simple $G$. We will impose this condition when appropriate. 

We will also make use of the adjoint form $G_{ad}$ of $G$. Let us recall that we have
\beqn
\on{Aut}(G) \ = \ \on{Aut}(G_{ad}) \ = \  \on{Aut}(\fg)
\eeqn
so that we can think of $\theta$ as an element in any of these groups. In~\cite{RLYG} the authors work with the adjoint group but all their results can in this manner be directly translated to the simply connected group $G$. Recall that we have 
\beq
\label{auto}
1 \to G_{ad} \to \on{Aut}(G) \to \on{Out}(\fg)\to 1\,,
\eeq
where $\on{Out}(\fg)$ stands for the outer automorphisms of $\fg$ which are precisely the automorphisms of the Dynkin diagram of $\fg$. The exact sequence above can be split by choosing a pinning. We recall this construction. 

Consider a maximal torus $T\subset G$ and a Borel subgroup $B\supset T$. Let $\Lt=\on{Lie }T$. We write $X^* (T) = \on{Hom} (T, \bG_m)$ and $X_* (T) = \on{Hom} (\bG_m, T)$. Let $ \Phi=\Phi(T,G)$ denote the set of roots. 
Let $ W(G, T) = N_G(T) / T$ denote the  Weyl group. Then $W(G,T)$ is the Coxeter group associated to the root system $\Phi \subset  X^* (T)_\bC \cong \Lt^*$.
Let $\Delta\subset \Phi$ be the set of simple roots determined by $B$. For each $\alpha\in\Delta$, we fix an isomorphism,
$
x_\alpha:\bG_a\to U_\alpha, 
$
such that $tx_\alpha(c)t^{-1}=x_\alpha(\alpha(t)c)$ for all $c\in\bG_a$ and $t\in T$, where $U_\alpha$ is the unique closed subgroup of $G$ such that such $x_\alpha$ exists. Let 
\beqn
\text{$X_{\alpha}=dx_\alpha(1)$, $\alpha\in\Delta$.}
\eeqn
Once we have fixed the pinning $(B,T,\{X_{\alpha}\}_{\alpha\in\Delta})$, we can consider pinned automorphisms of $G$, i.e., the automorphisms which preserve the datum $(B,T,\{X_{\alpha}\}_{\alpha\in\Delta})$. It is easy to see that pinned automorphisms of $G$ can naturally be identified with $\on{Out}(\fg)$ and in this manner we obtain a splitting of~\eqref{auto} and we can write $\on{Aut}(G) = G_{ad} \rtimes   \on{Out}(\fg)$. Note that any two pinnings of $G$ are conjugate to each other by a unique element in $G_{ad}$. 

Similarly, $\on{Aut}(\Phi) = W(G,T) \rtimes   \on{Out}(\fg)$. If the torus $T$ is stable under an automorphism  $\theta\in\on{Aut}(G)$, then $\theta$ gives rise to an element $w \vartheta\in\on{Aut}(\Phi)$, where $\vartheta\in  \on{Out}(\fg)$, and then 
\beq\label{theta-new}
\theta = \on{Int}(n_w)\circ \vartheta
\eeq
 for some lift $n_w\in N_G(T)$ of $w$, where $\on{Int}(n_w)$ denotes conjugation by $n_w$. 

Let us consider the split torus
\beq
\label{split torus}
\Ts=Z_G (\La).
\eeq
It  is, by construction, a maximal torus of $G$ which is $\theta$-stable. Thus, choosing a pinning with $\Ts$ as the maximal torus the element $\theta$ gives rise to an element $w\vartheta\in W(G,\Ts) \rtimes  \on{Out}(\fg)$. It is shown in~\cite[section 4]{RLYG} that $\theta$ is stable if and only if the element $w\vartheta$ is elliptic regular. Furthermore, the $W$-conjugacy class of $w\vartheta$ determines $\theta$ uniquely  (up to conjugacy) and $m$, the order of $\theta$, is the order of the element $w\vartheta$ in $W(G,\Ts) \rtimes  \on{Out}(\fg)$.

We will sometimes make use of another pinning with respect to a fundamental torus $T_{\mathbf{f}}$. Following the convention in real groups we call a maximal torus $T$ {\it fundamental} if it is stable under $\theta$  and  $(T^\theta)^0$ is a maximal torus in $K$. It follows from the results of~\cite[section 4]{RLYG} that the automorphism  $\theta$  inducing a stable grading takes the following form with respect to a pinning based on the fundamental torus: \beq\label{stable gradings}
\theta =\on{Int}(\check\rho_{ad}(\zeta_m))\circ\vartheta_{\mathbf{f}},
\eeq
where $\check\rho_{ad}$ is the half sum of positive coroots for $G_{ad}$,  $\vartheta_{\mathbf{f}}$ is a pinned automorphism of $G$ with respect to $T_{\mathbf{f}}$. Conversely, if $\theta$ is as in~\eqref{stable gradings} with respect to some pinning then $T$ is a fundamental torus and $(T^\theta)^0 = (T^\vartheta)^0$ is a maximal torus in $K$.

Let us write $W_\La = W(K,\La)$ for the ``little" Weyl group:
\beqn
W_\La = W(K,\La) \coloneqq N_K(\La) / Z_K(\La).
\eeqn
Because the grading is stable we have
\beqn
W_\fa=W^\theta=\{w\in W\mid w \circ \theta|_{\Ts}= \theta|_{\Ts}\circ w\} ,
\eeqn
where $W=N_G(\Ts)/\Ts$. 
The pair $(K,\fg_1)$ constitutes a polar representation in the sense of~\cite{DK}. 

\subsection{Hecke algebras associated to complex reflection groups}\label{sec-hec}

Let $V$ be a finite dimensional complex vector space and $W\subset GL(V)$ a finite complex reflection group, i.e., a finite subgroup generated by complex reflections $s$. Such an $s$ fixes a hyperplane in $V$, called the reflection hyperplane of $s$. Let $\cA$ be the set of reflection hyperplanes of $(W,V)$. The braid group  associated to $W$ is defined as (see~\cite{BMR})
\beqn
B_W=\pi_1(V^{reg},x),\ \ \text{where}\ \  V^{reg}=V-\cup_{H\in\cA}H,\ \ \text{and}\ \ x\in V^{reg}.
\eeqn
For each $H\in\cA$, the point-wise stabiliser $W_H$ of $H$ in $W$ is a cyclic group generated by a reflection $s_H$ with determinant $e^{2\pi\mathbf{i}/n_H}$, where $n_H=|W_H|$. We will refer to such $s_H$'s {\em distinguished} reflections in $W$. 

 Let $\cA/W$ denote the set of $W$-orbits in $\cA$. For $H\in\cA$ we write $\bar H\in\cA/W$ for the $W$-orbit of $H$ and $n_{\bar H}=n_H$.  Let $R=\bZ[u_{\bar H,j_{\bar H}}^{\pm 1},\bar H\in\cA/W, j_{\bar H}\in[0,n_{\bar H}-1]]$, where $u_{\bar H,j_{\bar H}}$ are  indeterminates.
In~\cite{BMR} the Hecke algebra $H(W)$ is defined to be the quotient of the group algebra $R[B_W]$ by the ideal generated by elements of the form
\beqn
\prod_{i=0}^{n_H-1}(\sigma_H-u_{\bar H,i})
\eeqn
where $\sigma_H\in B_W$ is a braid generator corresponding to $s_H$.

We will consider the specialisation $H_q(W)=\bC\otimes_R H(W)$ via a homomorphism $q:R\to\bC$. Then $H_q(W)$ is a $\bC$-algebra of dimension $|W|$ (see~\cite{E} and references therein). In other words, for each reflection hyperplane $H$  (or equivalently, each distinguished reflection $s_H$ in $W$) we specify a polynomial $R_H$ of degree $n_H$ and then the Hecke algebra is given by relations $R_H(\sigma_H)$ for the braid generators $\sigma_H$ corresponding to $s_H$.

\subsection{Nearby cycle sheaves with twisted coefficients}
\label{twisted nearby}

Our main tool in this paper is the geometric construction of nearby cycle sheaves in the context of stable polar representations  from~\cite{GVX2}. We have assumed that the grading on $\fg$ is stable and this ensures that
 $(K,\Lg_1)$ is a stable polar representation. Note, however, that the notion of stability for graded Lie algebras is much stronger than the notion of stability for polar representations. In~\cite{GVX2} we do not work with arbitrary stable polar representations but have imposed certain conditions on the polar representation. The conditions are: visibility (or rank 1), locality, and the existence of regular nilpotents. We will explain these conditions below and verify  that they  are satisfied for the pair $(K,\fg_1)$. Visibility is discussed in this subsection, the existence of regular nilpotents in subsection~\ref{subsec-fund-group} and locality in subsection~\ref{subsec-regular-splitting}. 

To explain the construction in our context let us consider the adjoint quotient map
\beqn
f: \Lg_1 \to \Lg_1 \inv K \cong \La \slash W_\La \, .
\eeqn
It restricts to a fibration $f: \Lg_1^{rs} \to \La^{rs} \slash W_\La$. Let $a_0\in\fa^{rs}$. We set $\barbasepta = f (\basepta)$. Recall that
\beqn
B_{W_\La} \coloneqq \pi_1 (\La^{rs} \slash W_\La, \barbasepta)
\eeqn
is the braid group associated to the complex reflection group $W_\La$.
Let:
\beqn
I = Z_K (\La) = (\Ts)^\theta 
\eeqn
where $\Ts=Z_G(\fa)$ is the split torus as in~\eqref{split torus}. Note that, by our stable grading assumption, $I$ is a finite (abelian) group. The little Weyl group $W_\La$ acts on $I$ via conjugation. 
 Let us write: 
\beqn
X_{\barbasepta} = f^{-1} (\barbasepta) = \text{the semisimple $K$-orbit through $\basepta \,$.}
\eeqn
We have $X_{\barbasepta} \cong K / Z_K(\La)$, and therefore:
\beq
\label{fiber fund group}
\pi^K_1 (X_{\barbasepta}, \basepta) \cong I.
\eeq
The fiber $\cN_1=f^{-1}(0)$ consists of nilpotent elements in $\fg$, i.e., $\cN_1 = \fg_1 \cap \cN$ where $\cN$ is the nilpotent cone in $\fg$. According to~\cite[Theorem 4]{Vin} the group $K$ has finitely many orbits on $\cN_1$. In the language of polar representations we say that the representation $(K,\fg_1)$ is {\it visible}. 

We identify $\Lg_1^*$ with $\Lg_{m-1}=\Lg_{-1}$ using a $G$-invariant and $\theta$-invariant non-degenerate symmetric bilinear form on $\Lg$. We write $\fF$ for the topological Fourier transform functor, which induces an equivalence:
\beqn
\fF : \on{Perv}_{K} (\Lg_{-1})_{\bC^*\text{-conic}} \to \on{Perv}_{K} (\Lg_1)_{\bC^*\text{-conic}} \, ,
\eeqn
on the category of $K$-equivariant $\bC^*$-conic perverse sheaves. See \cite[Definition 3.7.8]{KS} for a definition of this functor up to a shift.
\label{subsec-twisted}
 Consider a character:
\beqn
\chi \in \hat I \coloneqq \on{Hom} (I, \bG_m).
\eeqn
By~\eqref{fiber fund group} the character $\chi$ gives rise to a rank one $K$-equivariant local system $\cL_\chi$ on $X_{\barbasepta}$, with $(\cL_\chi)_{\basepta} = \baseRing$. As explained in~\cite{GVX2} we can take the limit of $\cL_\chi$ to the nilpotent cone via the nearby cycle functor and so we obtain a $K$-equivariant perverse sheaf $P_\chi$ on $\cN_{-1} = \fg_{-1} \cap \cN$. Note that here we are working on the adjoint quotient of  $\fg_{-1}$. We now consider the Fourier transform $\fF P_\chi$. It is given as 
\beq
\fF P_\chi \cong \on{IC} (\fg_1^{rs},  \cM_\chi).
\eeq
where  $ \cM_\chi$ is a certain $K$-equivariant local system on $\fg_1^{rs}$ which we now proceed to describe in broad terms.

\subsection{The equivariant fundamental group}
\label{subsec-fund-group}

In order to describe the local system $\cM_\chi$ we analyze the equivariant fundamental group $\pi_1^K (\Lg_1^{rs}, \basepta)$; here we have picked  a basepoint $\basepta \in \La^{rs} \subset \Lg_1^{rs}$.  Let us write:
\beqn
\widetilde B_{W_\La} = \pi_1^K (\Lg_1^{rs}, \basepta), \qquad\widetilde W_\fa \coloneqq N_K (\Cartan).
\eeqn
Note that by our stable grading assumption $\widetilde W_\fa$ is a finite group.

As in \cite[Section 2.2]{GVX2}, we have a commutative diagram:
\beq
\label{mainDiagram}
\begin{CD}
1 @>>> I @>>> \widetilde B_{W_\La} @>{\tilde q}>> B_{W_\La} @>>> 1 \;\,
\\
@. @| @VV{\tilde p}V @VV{p}V @.
\\
1 @>>> I @>>> \widetilde W_\La @>{q}>> W_\La @>>> 1 \, .
\end{CD}
\eeq
We can identify the group $\widetilde B_{W_\fa}$ as a subgroup of $\widetilde W_\La \times B_{W_\La}$ as follows:
\beqn
\widetilde B_{W_\La} \cong \{ (\tilde w, \braid) \in \widetilde W_\La \times B_{W_\La} \; | \; q(\tilde w) = p(\braid) \}.
\eeqn
The top row of diagram~\eqref{mainDiagram} splits, i.e., the map $\tilde q$ admits a right inverse $\tilde r : B_{W_\La} \to \widetilde B_{W_\La}$ by utilizing the Kostant slice. We construct the Kostant slice explicitly as follows following ~\cite{L1,L2,RLYG}. Consider the pinning using a fundamental torus so that $\theta$ is given by~\eqref{stable gradings}. Using the pinning we have a regular nilpotent element 
\beq
\label{E}
E = \sum_{\alpha \in \Delta} X_\alpha\in\fg\,.
\eeq
By construction $E\in \fg_1$. Let us consider a slice $\Ls \subset \Lg_1$ containing $E$ such that the tangent space to the nilpotent cone $[\fg_0,E]$ is transverse to $\Ls$ in $\fg_1$. In the language of \cite{GVX2} this amounts to $E$ being a regular nilpotent element. As explained in~\cite[Section 2.3]{GVX2} the existence of the Kostant slice (which also guarantees the existence of a regular nilpotent in the sense of~\cite{GVX2}) implies the existence of regular splittings. 
We will discuss the notion of regular splittings in the next subsection. 

 The $K$-orbit $X_{\barbasepta}$ intersects the section $\Ls$ at exactly one point $a' \in \Ls^{rs}$. The restriction $f |_{\Ls^{rs}} : \Ls^{rs} \to \La^{rs} / W_{\La}$ induces an isomorphism of fundamental groups:
\beqn
B_{W_\La} = \pi_1 (\La^{rs} / W_{\La}, \barbasepta) \cong \pi_1 (\Ls^{rs}, a').
\eeqn

We will describe the local system $\cM_\chi$ as a representation of the group $\widetilde B_{W_\La}$  viewed as a semidirect product of $B_{W_\La}$ and $I$ via the splitting $\tilde{r}$ given by the Kostant slice. 

\subsection{Reduction to (semisimple) rank one}
\label{subsec-regular-splitting}

In this subsection we explain how to pass from our global situation to (semisimple) rank one. 

Let $s \in W_\La$ be a distinguished reflection (recall that such reflections are in bijection with reflection hyperplanes of $W_\fa$ in $\fa$) and let $\La_s \subset \La$ denote the hyperplane fixed by $s$. We will construct a new graded Lie algebra as follows. We write $G_s = Z_G(\fa_s)$. The automorphism $\theta$ restricts to an automorphism on  $G_s$ which we continue to denote by $\theta$. Thus, we obtain a graded Lie algebra $\fg_s = \Lie(G_s)$ with $K_s = Z_K (\La_s)$. Note that $G_s$ is, of course, not semisimple  contrary to our running hypothesis. The results of section~\ref{Isogenies} will allow us to later reduce to that case. 
Define:
\beqn
W_{\fa,s} = N_{K^0_s} (\La) / Z_{K^0_s} (\La) \;\; \text{and} \;\; \widetilde W_{\fa,s} = N_{K^0_s} (\La).
\eeqn
We have a natural projection $q_s : \widetilde W_{\fa,s} \to W_{\fa,s}$. Therefore, we have inclusions $W_{\fa,s} \to W_\La$ and $\widetilde W_{\fa,s} \to \widetilde W_\La$. We will use these inclusions to view $W_{\fa,s}$ (resp. $\widetilde W_{\fa,s}$) as a subgroup of $W_\La$ (resp. $\widetilde W_\La$). We have the following short exact sequence:
\beqn
1 \longrightarrow I_s \coloneqq Z_{K_s^0} (\La)  \longrightarrow \widetilde W_{\fa,s} \xrightarrow{\; q_s \;} W_{\fa,s} \longrightarrow 1 \, .
\eeqn
Note that, by construction, we have $I_s  \subset I$.

\begin{remark}
Note that here we have passed from $K_s$ to $K_s^0$ following~\cite{GVX2}. We do so as we are using the reduction to (semisimple) rank one as a means to describe a Hecke algebra which we will use to describe $\cM_\chi$. In  section~\ref{endoscopy} on endoscopy we will not make this reduction. 
\end{remark}

To apply~\cite{GVX2}, we verify that the {\it locality} property holds in our setting.
\begin{lemma}
We have $W_{\fa,s} = Z_{W_\fa}(\fa_s)=\langle s\rangle$, the subgroup of $W_\fa$ generated by the distinguished reflection $s$.
\end{lemma}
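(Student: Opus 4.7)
The proof naturally splits into two equalities: $Z_{W_\fa}(\fa_s)=\langle s\rangle$, and $W_{\fa,s}=\langle s\rangle$. The first is essentially a restatement of the definition of ``distinguished reflection'' recalled in \S\ref{sec-hec}: the pointwise stabilizer of a reflecting hyperplane in a finite complex reflection group is cyclic (Steinberg/Shephard--Todd), and $s$ is by hypothesis the distinguished reflection generating this stabilizer at $\fa_s$. Since $W_\fa$ is a complex reflection group by Dadok--Kac applied to the stable polar representation $(K,\fg_1)$, this gives $Z_{W_\fa}(\fa_s)=\langle s\rangle$ at once.

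The inclusion $W_{\fa,s}\subset\langle s\rangle$ is straightforward: any representative of an element of $W_{\fa,s}$ lies in $K_s^0\subset K_s=Z_K(\fa_s)$, so it centralizes $\fa_s$ pointwise, and its image in $W_\fa$ therefore lies in $Z_{W_\fa}(\fa_s)=\langle s\rangle$.

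The substantive step is the reverse inclusion $\langle s\rangle\subset W_{\fa,s}$, i.e.\ exhibiting a representative of $s$ inside $K_s^0\cap N_K(\fa)$. Pick any lift $n\in N_K(\fa)$ of $s$; since $s$ fixes $\fa_s$ pointwise we have $n\in Z_K(\fa_s)=K_s$, and $n$ is determined only modulo $I=Z_K(\fa)\subset K_s$. The problem reduces to showing that the class $nK_s^0\in K_s/K_s^0$ is hit by $I$. I would handle this via rank-one reduction to the Levi $G_s=Z_G(\fa_s)$: restricting $\theta$ yields a graded reductive Lie algebra of semisimple rank one with the same Cartan $\fa$. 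Passing to the simply connected cover $\pi:\widetilde{G_s^{\mathrm{der}}}\to G_s^{\mathrm{der}}$ of the derived subgroup, $\theta$ lifts to a finite-order $\tilde\theta$ whose fixed subgroup $\widetilde{G_s^{\mathrm{der}}}^{\tilde\theta}$ is connected by Steinberg's connectedness theorem. Since $s\in W(G_s^{\mathrm{der}},\Ts\cap G_s^{\mathrm{der}})^\theta$ (using that $W$-groups are preserved by isogeny and that $s$ is $\theta$-fixed in $W$), a $\tilde\theta$-fixed Weyl-group lifting, again available by Steinberg's theory for simply connected groups, produces a representative of $s$ in $\widetilde{G_s^{\mathrm{der}}}^{\tilde\theta}$; its image in $G$ lies in the connected subgroup $\pi(\widetilde{G_s^{\mathrm{der}}}^{\tilde\theta})\subset K_s^0$ and normalizes $\fa$, providing the desired lift.

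The main obstacle is this last step: carefully lifting $\theta$ to the simply connected cover and then securing a $\tilde\theta$-fixed Weyl representative, the latter being a Lang--Steinberg-flavoured statement that crucially exploits simple connectedness. Should this route prove delicate, a concrete alternative is to build the lift of $s$ by hand from the fundamental-torus pinning in~\eqref{stable gradings}, writing it as a product of elements in $\theta$-invariant root subgroups in the spirit of the Kostant-slice construction of \S\ref{subsec-fund-group}.
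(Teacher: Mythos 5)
Your decomposition into two inclusions is sound, and the first part — $Z_{W_\fa}(\fa_s)=\langle s\rangle$ via Steinberg's theorem on pointwise stabilizers of subspaces in finite complex reflection groups — is exactly what the paper does. The easy inclusion $W_{\fa,s}\subset\langle s\rangle$ is also correct, once one checks (as you implicitly do) that $W_{\fa,s}\to W_\fa$ is injective.

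For the substantive inclusion $\langle s\rangle\subset W_{\fa,s}$ you take a genuinely different route from the paper, and your route has a gap. The paper invokes Vinberg's Proposition~13: the maximal torus $\Ts=Z_G(\fa)$ witnesses that $(Z_\fg(\fa_s),\theta)$ is \emph{saturated}, i.e.\ $N_{K_s^0}(\fa)/Z_{K_s^0}(\fa)=N_{K_s}(\fa)/Z_{K_s}(\fa)$, and the latter quotient is $\langle s\rangle$ since any lift of $s$ in $N_K(\fa)$ lies in $K_s=Z_K(\fa_s)$. You instead try to exhibit the lift via the simply connected cover, but the key claim — that a $\tilde\theta$-fixed Weyl representative exists ``by Steinberg's theory for simply connected groups'' — is not correct as stated. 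Simple connectedness does not force a $\theta$-fixed Weyl element to admit a $\theta$-fixed lift in $N_G(T)$: for $G=SL_2$, $T$ diagonal, and $\theta=\on{Int}(t_0)$ with $t_0\in T$ of order $>2$, $\theta$ acts trivially on $W$ yet no lift of the nontrivial reflection is $\theta$-fixed. Concretely, starting from any lift $n$ with $\theta(n)=nt$, the obstruction to adjusting $n$ by an element of $T$ lies in $\ker(N_T)/(1-\theta)T$ (with $N_T(u)=\prod_i\theta^i(u)$), which is isomorphic to $X_*(T)^\theta/N X_*(T)$ and vanishes precisely when $X_*(T)^\theta=0$, i.e.\ when $\theta$ is elliptic on $T$. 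That — ellipticity, not simple connectedness — is the ingredient you actually need, and it does hold here: stability of the grading forces $w\vartheta$ to be elliptic, so $(\on{Lie}\Ts)^\theta=0$, and this descends to the Levi since $(\on{Lie}\Ts\cap\fg_s^{\mathrm{der}})^\theta\subset(\on{Lie}\Ts)^\theta=0$. Once ellipticity is substituted for the incorrect appeal to Steinberg, your argument does close: the $\tilde\theta$-fixed lift of $s$ normalizes $\fa$ and lies in the connected group $\widetilde{G_s^{\mathrm{der}}}^{\tilde\theta}$, hence maps into $K_s^0$. So the route is salvageable, but note that the paper's citation of Vinberg's saturation theorem packages precisely the point you are trying to reprove.
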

\begin{proof}
The fact that $Z_{W_\fa}(\fa_s)=\langle s\rangle$ follows from a theorem of Steinberg's~\cite[Theorem 1.5]{St} (see also~\cite[Theorem 9.44]{LT}).

Note that the torus $\Ts$ is a maximal torus satisfying the hypothesis of~\cite[Proposition 13]{Vin}. Thus by [{\em loc.cit.}] the pair $(Z_\Lg(\fa_s),\theta)$ is saturated and therefore $W_{\fa,s}= N_{K^0_s} (\La) / Z_{K^0_s} (\La) =N_{K_s}(\La)/Z_{K_s}(\La)$. The latter group coincides with $\langle s\rangle$. 
\end{proof}

Note that specifying a  splitting $\tilde r : B_{W_\La} \to \widetilde B_{W_\La}$ is equivalent to specifying a map $r: B_{W_\La}\to \widetilde W_\La$ such that $q\circ  r = p$. A splitting $\tilde r$ is called {\it regular} if $r(\sigma_s)\in \widetilde W_{\fa,s}$, where $\sigma_s\in B_{W_\fa}$  is a chosen braid generator corresponding to $s$. As was observed earlier the fact that our splitting comes from a Kostant slice guarantees that it is regular. 

In~\cite{GVX2} we associate a polynomial $R_{\chi,s}$ to the polar representation $((G_s^\theta)^0=K_s^0,Z_{\Lg_1}(\fa_s))$ and the character $\chi_s=\chi|_{I_s}$ of $I_s$. These polynomials are used to construct a Hecke algebra and then we describe $\cM_\chi$ in terms of that Hecke algebra. The polynomials $R_{\chi,s}$ are determined explicitly in section~\ref{sec-stable rank 1} in our stable grading setting.

\subsection{Determination of the local system \texorpdfstring{$\cM_\chi$}{Lg}}\label{sec-local system Mchi}

In this subsection we explain how to express the local system $\cM_\chi$ in terms of the local data discussed in the previous subsection. 

Let us begin with a character $\chi$ of $I$. We set
\beqn
W_{\fa,\chi} \coloneqq \on{Stab}_{W_\fa} (\chi) \;\; \text{and} \;\;
B_{W_\fa}^\chi \coloneqq \on{Stab}_{B_{W_\fa}} (\chi) = p^{-1} (W_{\fa,\chi})\,.
\eeqn
For each distinguished reflection $s$, set 
\beqn
\text{$W_{\fa,s,\chi} = W_{\fa,s} \cap W_{\fa,\chi}$
and $e_s = |W_{\fa,s}|\ / |W_{\fa,s,\chi} | \in \bN$,}
\eeqn
 so that 
$W_{\fa,s,\chi} = \langle s^{e_s} \rangle$.  

\begin{definition}
\label{def 0}
Let $W_{\fa,\chi}^0 \subset W_{\fa,\chi}$ be the subgroup generated by all
the $W_{\fa,s,\chi}$. 
\end{definition} 
Note that $W_{\fa,\chi}^0$
is a complex reflection group acting on $\Cartan$.

The character $\chi$ induces a character $\chi_s :I_s \to \bG_m$ for every distinguished reflection $s$, that is, $\chi_s=\chi|_{I_s}$. As explained in the previous subsection, in~\cite{GVX2} to each such character  we have associated a polynomial 
$$
R_{\chi,s}\in\bC[x],
$$
which is the minimal polynomial of the monodromy around the hyperplane $\fa_s$.
 Moreover, we  have shown that there exists $\bar R_{\chi,s}\in\bC[x]$ such that 
\beq\label{mono-1}
R_{\chi,s}(x)=\bar R_{\chi,s}(x^{e_s}),
\eeq
i.e. that $R_{\chi,s}$ is in fact a polynomial in $x^{e_s}$. 
 Note that the distinguished reflections in $W_{\fa,\chi}^0$ are  precisely the $s^{e_s}$. Thus, we can define a Hecke algebra $\cH_{W_{\fa,\chi}^0}$ associated to the complex reflection group $W_{\fa,\chi}^0$ with relations given by the polynomials $\bar R_{\chi,s}$ as in subsection~\ref{sec-hec}.

Let
\beqn
B_{W_\fa}^{\chi, 0}=p^{-1}(W_{\fa,\chi}^{0}),\ \widetilde B_{W_\fa}^\chi = \tilde q^{-1} (B_{W_\fa}^\chi)
\;\; \text{and} \;\;
\widetilde B_{W_\fa}^{\chi, 0} = \tilde q^{-1} (B_{W_\fa}^{\chi, 0}).
\eeqn
By~\cite{GVX2}, $\cM_\chi$ is the $K$-equivariant local system on $\Lg_1$ corresponding to the following representation of $\pi_1^{K}(\Lg_1^{rs})=\widetilde{B}_{W_\fa}$
\beq
\cM_\chi \ = \ \left( \bC [\widetilde B_{W_\fa}]
\otimes_{\bC [\widetilde B_{W_\fa}^{\chi, 0}]}
(\bC_\chi \otimes  \cH_{W_{\fa,\chi}^0}) \right)\otimes \bC_\tau \, ,
\eeq
where the Hecke algebra $\cH_{W_{\fa,\chi}^0}$ is viewed as a
$\bC [\widetilde B_{W_\fa}^{\chi, 0}]$-module via the composition of maps
\beqn
\widetilde B_{W_\fa}^{\chi, 0}\to B_{W_\fa}^{\chi, 0} \to
B_{W_{\fa,\chi}^0} \, ,
\eeqn
$\bC_\chi$ is viewed as a
$\bC [\widetilde B_{W_\fa}^{\chi, 0}]$-module via the unique extension of the character $\chi$ to $\widetilde B_{W_\fa}^{\chi, 0}$, and $\tau$ is the character on $I$ given by $\tau(x)= \det(x|_{\fg_1})$. As observed in~\cite{GVX2}, the character $\tau$ takes values in $\{\pm 1\}$.

\begin{remark}\label{remark tau}
We will check that $\tau$ is trivial in our setting for classical types via a case by case analysis. We do not know of a uniform proof. Note that in~\cite{VX1} we allow disconnected $K$ in which case non-trivial $\tau$ occurs.
\end{remark}
\begin{remark}\label{remark rho}
In~\cite{GVX2} another character $\rho$ appears. However, our results in section~\ref{sec-stable rank 1} show that it is trivial in our setting. 
\end{remark}
 
\section{Isogenies, products, and the reductive case}
\label{Isogenies}

We have assumed that the semisimple group $G$ is simply connected. This ensures that the group $K$ is connected. In this section we briefly describe how to handle the general case. We also discuss the case of a reductive group. The case of a disconnected $K$ will come up in section~\ref{endoscopy} on endoscopy. 

 Let us consider an arbitrary semisimple group $G$ equipped with a finite order automorphism $\theta$ which induces a  (stable) grading on $\fg$. Let us write $G_{\!{sc}}$ for the simply connected cover of $G$. The automorphism $\theta$ lifts to an automorphism of $G_{sc}$ which we continue to denote by $\theta$. We write $K=G^\theta$ and $K_{sc}=(G_{\!{sc}})^\theta$. Then $K_{sc}$ is connected and is a cover of $K^0$, the connected component of $K$. Thus $\on{Perv}_{K^0}(\cN_1)$ is a full subcategory of $\on{Perv}_{K_{sc}}(\cN_1)$. Therefore, the theory of character sheaves for the pair $(K^0,\fg_1)$ can be deduced from the theory from $(K_{sc},\fg_1)$ as $I^0 = Z_{K^0}(\fa)$ is naturally a quotient of  $I_{sc} = Z_{K_{sc}}(\fa)$.

Let us now compare the theories for $(K^0,\fg_1)$ and $(K,\fg_1)$. Let us write, as before, $I = Z_{K}(\fa)$. Then we have an exact sequence 
$$
1 \to I^0 \to I \to K/K^0 \to 1\,.
$$
By~\cite[Proposition 13]{Vin} we have saturation, i.e., for the little Weyl group we have
\beqn
W_\fa \ = \ N_K(\fa)/Z_K(\fa) \ = \ N_{K^0}(\fa)/Z_{K^0}(\fa)\,.
\eeqn
The braid group $B_{W_{\fa}}$ acts on $I$. The action factors through $W_{\fa}$ and is trivial on the quotient  $K/K^0$. Therefore we obtain the following diagram:
\beqn
\begin{CD}
1 @>>> I^0 @>>> \widetilde B^0_{W_\La}= \pi_1^{K^0} (\Lg_1^{rs}, \basepta) @>>> B_{W_\La} @>>> 1
\\
@. @VVV @VVV @| @.
\\
1 @>>> I @>>> \widetilde B_{W_\La}= \pi_1^K (\Lg_1^{rs}, \basepta) @>>> B_{W_\La} @>>> 1
\\
@. @VVV @VVV @. @.
\\
@.  K / K^0 @= K / K^0 \, . @. @. 
\end{CD}
\eeqn

Let us now consider a character $\chi \in \hat I$ and let us write $\chi^0$ for its restriction to $I^0$. Because the action of $W_{\fa}$ is trivial on $K/K^0$ we wee that  $W_{\fa,\chi}=W_{\fa,\chi^0}$. These facts allow one to carry out the proof of the main theorem in~\cite{GVX2} to the situation of the pair $(K,\fg_1)$ and we obtain the same formula as in the connected case:
\beq
\label{M}
\cM_\chi \ = \ \left( \bC [\widetilde B_{W_\fa}]
\otimes_{\bC [\widetilde B_{W_\fa}^{\chi, 0}]}
(\bC_\chi \otimes  \cH_{W_{\fa,\chi}^0}) \right)\otimes \bC_\tau \,. 
\eeq
As we remarked in the previous section we know by case-by-case analysis that for connected $K$ the character $\tau$ is trivial. Thus, the character $\tau$ comes from a character of $K/K^0$. Thus, the disconnectedness of $K$ only adds the character $\tau$ to the story. This is consistent with our previous work in the symmetric pair situation~\cite{VX1}. Note also that the key point of this formula is the representation 
\beqn
 \cH_{W_{\fa,\chi}^0} \otimes \bC_{\chi+\tau} 
\eeqn
of $I\rtimes B_{W_{\fa,\chi}^0}$. Recall also that the character $\bC_{\chi+\tau}$ can be regarded as a character of $I\rtimes B_{W_{\fa,\chi}^0}$ because $\chi+\tau$ is  fixed by $W_{\fa,\chi}^0$. Note also that the representation  $\cH_{W_{\fa,\chi}^0}$ only depends on $\chi^0$. 

We could also study the relationship between the pairs $(K^0,\fg_1)$ and $(K,\fg_1)$ by restriction and induction. The forgetful functor $\on{For}:\on{Perv}_{K}(\fg_{\pm}) \to \on{Perv}_{K^0}(\fg_{\pm})$ has a simultaneously left and right adjoint 
$$
\on{Ind}_{K^0}^K: \on{Perv}_{K^0}(\fg_{\pm}) \to \on{Perv}_{K}(\fg_{\pm})\,.
$$
Obviously,
\beqn
\on{For} (\cM_\chi) \ =\  \cM_{\chi^0}\,.
\eeqn
Conversely, 
Let us consider a character $\chi^0\in \widehat {I^0} = \on{Hom} (I^0, \bG_m)$. Let us write
\beqn
\on{Ind}_{I^0}^I(\chi^0) \ = \ \oplus \chi_i  \,.
\eeqn
The $\chi_i$ constitute all the characters of $I$ whose restriction to $I^0$ is $\chi^0$. The functor $\on{Ind}_{K^0}^K$ commute with Fourier transform and thus we obtain
\beqn
\fF( \oplus P_{\chi_i}) \cong  \on{IC} (\fg_1^{rs},  \on{Ind}_{K^0}^K\cM_{\chi^0}) .
\eeqn
As a representation of $I\rtimes B_{W_{\fa,\chi}^0}$ inducing from $I^0\rtimes B_{W_{\fa,\chi}^0}$ we get
\beqn
\on{Ind}_{K^0}^K( \cH_{W_{\fa,\chi}^0} \otimes \bC_{\chi^0}) = \cH_{W_{\fa,\chi}^0} \otimes \on{Ind}_{I^0}^I( \bC_{\chi^0}) =   \oplus (\cH_{W_{\fa,\chi}^0} \otimes\bC_{\chi_i}) \,.
\eeqn
Note, however, that the formula~\eqref{M} is more precise. It tells us that starting from $\chi_i$ we obtain the representation $\cH_{W_{\fa,\chi}^0} \otimes \bC_{\chi_i+\tau}$ whereas the induction process only tells us where the whole packet of characters goes. For the purposes of classification of character sheaves the shift by $\tau$ can of course be ignored. 

The above considerations show that the case of semisimple $G$ can be reduced to the case of simply connected $G$ and we will now consider that case. Let us write $G$ as a products of its simple components $G^i$, $i=1, \dots, d$. The automorphism $\theta$ can permute isomorphic components. To analyze this situation, it suffices to consider the case when all the $G^i$ are isomorphic and $\theta$ permutes them cyclically. In that case $d|m$ and we can write $m=d m'$. Now, $\theta' = \theta^d$ is a (stable) automorphism of $G^1$ of order $m'$. We have
 \beqn
 K^1 = (G^1)^{\theta'} \cong G^\theta = K \qquad \text{and} \qquad (\fg^1)_1 \cong \fg_1\,
 \eeqn
 where $\Lg^1=\on{Lie}(G^1)$ and $(\fg^1)_1$ denotes the $\zeta_m^d$-eigenspace of $\theta'$ in $\Lg^1$. 
 These isomorphisms induce an isomorphism of polar representations 
 \beq\label{reduction-prod}
 (K,\fg_1) \ \cong \ (K^1,(\fg^1)_1)\,.
 \eeq
 Thus we are reduced to the case of an almost simple simply connected $G$.
 \begin{remark}
 When $m=d=2$ this argument shows that the case of $G\times G$ where $\theta$ permutes the components reduces to the classical case of the adjoint action of $G$ on $\fg$. 
 \end{remark}

Let us now assume that $G$ is reductive. We obtain a polar representation of $K=G^\theta$ on $\fg_1$. Proceeding as above, we can reduce to the study of the pair $(K^0,\fg_1)$. Let us now consider the derived group $G_{\text{der}}$ of $G$. We claim that we can further reduce the study of the pair $(K^0,\fg_1)$ to the pair $(K_{\text{der}}^0,(\fg_{\text{der}})_1)$, where $K_{\text{der}}=(G_{\text{der}})^\theta$ and $\fg_{\text{der}}=\on{Lie}G_{\text{der}}$. Observe that stability of the grading implies that  $K^0=K_{\text{der}}^0$. Thus, if $\fa$ is a Cartan subspace of $\fg_1$ we see that 
\beqn
\fg_1\inv K^0 = \fg_1\inv K_{\text{der}}^0 = \fa/W_\fa\,.
\eeqn
We note that $\fa = \fa_{\text{der}} \oplus Z(\Lg)_1$ where $\fa_{\text{der}}\subset(\Lg_{\on{der}})_1$ is a Cartan subspace and $Z(\Lg)$ denotes the center of $\fg$. Thus, we have
\beqn
\fa/W_\fa =  \fg_1\inv K_{\text{der}}^0 =   (\fg_{\text{der}})_1\inv K_{\text{der}}^0\times Z(\Lg)_1 = \fa_{\text{der}}/W_\fa \times Z(\Lg)_1. 
\eeqn
As $K^0=K_{\text{der}}^0$ we obviously have 
\beq\label{Isder}
I_{\text{der}} = Z_{K_{\text{der}}^0}(\fa_{\text{der}}) = Z_{K^0}(\fa) = I.
\eeq
In this manner we conclude that  the calculation of the Hecke algebra $\cH_{W_{\fa,\chi}^0}$ in formula~\eqref{M} can be done in the derived group $G_{\text{der}}$. 

\begin{remark}
The stability of the grading does not play an essential role in this section. The result in this section hold as long as the pair $(K,\fg_1)$ satisfies the hypotheses of~\cite{GVX2} with the exception that the map $I_{\text{der}} \to I$ is in general only a surjection. 
\end{remark}

\section{Stable rank 1 gradings: the Coxeter and twisted Coxeter cases}\label{sec-stable rank 1}

In this section, we consider the stable gradings of the Lie algebra $\Lg$ of an almost simple simply connected algebraic group $G$ such that $m$ is a maximal regular elliptic number of $W\vartheta$. That is, $m$ equals the Coxeter number when $\vartheta=1$ (resp. the twisted Coxeter number when $\vartheta\neq 1$). We refer to these gradings as Coxeter cases (resp. twisted Coxeter cases). For classical types, these exhaust the rank 1 stable gradings, while for exceptional groups, there are a few stable rank 1 gradings that need to be dealt with separately.

In subsection~\ref{twisted nearby} we have associated to a character $\chi\in \hat I$ a nearby cycle sheaf $P_\chi$ on $\cN_{-1}$. We have $\fF P_\chi \cong \on{IC} (\Lg_1^{rs}, \cM_\chi)$ where $\cM_\chi$ is a $K$-equivariant local system on $\Lg_1^{rs}$. In this section we determine $\cM_\chi$ as a representation of the equivariant fundamental group $\pi_1^K(\Lg_1^{rs})$ in the Coxeter and twisted Coxeter cases. We use the notations from subsection~\ref{twisted nearby}.

As in subsection~\ref{ssec-stable gradings} we fix a pinning $(B,T,\{X_\alpha\}_{\alpha\in\Delta})$. We extend $\{X_{\alpha},\,\alpha\in\Delta\}$ to a basis of $\oplus_{\beta\in\Phi}\Lg_\beta$ by choosing $X_\beta\in\Lg_\beta$ for each $\beta\in\Phi-\Delta$.

\subsection{The Coxeter case}\label{sec-cox}
In this subsection, let $\theta=\on{Int}(\check\rho_{ad}(\zeta_m))$ (see~\eqref{stable gradings}), where $m$ is the Coxeter number of $W=W(G,T)$. Let us write $\nu_G=\on{rank}G=|\Delta|$ and let $\alpha_i,i=1,\ldots,\nu_G$ denote the simple roots. For $\beta=\sum_{i=1}^{\nu_G}k_i\alpha_i\in\Phi$, we write $\on{ht}\beta=\sum_ik_i$. We have
\beqn
m=\max\{\on{ht}\beta\mid\beta\in\Phi\}+1.
\eeqn

\begin{lemma}We have
\beqn
\Lg_0=\on{Lie}K=\Lt,\ \ \Lg_1=\bigoplus_{i=1}^{\nu_G}\bC X_{\alpha_i}\oplus\bC X_{-\alpha_0},\ K=T \eeqn
where $\alpha_0\in\Phi^+$ is the highest root.
\end{lemma}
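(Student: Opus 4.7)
The plan is to make the whole computation by unwinding the action of $\theta = \on{Int}(\check\rho_{ad}(\zeta_m))$ on the root space decomposition of $\Lg$. Since $\check\rho_{ad}(\zeta_m) \in T_{ad}$ centralizes $T$, the automorphism $\theta$ preserves $\Lt$ (acting trivially on it) and each root space $\Lg_\beta$ (acting by a scalar). Specifically, on $\Lg_\beta$ we have
\beqn
\theta|_{\Lg_\beta} \ = \ \beta\bigl(\check\rho_{ad}(\zeta_m)\bigr) \ = \ \zeta_m^{\langle \beta,\,\check\rho_{ad}\rangle}.
\eeqn
The standard pairing identity $\langle \alpha_i, \check\rho_{ad}\rangle = 1$ for every simple root (which holds because $\check\rho_{ad}$ is a sum of fundamental coweights in $P^\vee = X_*(T_{ad})$) then gives $\langle \beta, \check\rho_{ad}\rangle = \on{ht}\beta$ for all $\beta \in \Phi$. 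Thus $\Lg_\beta \subset \Lg_i$ if and only if $\on{ht}\beta \equiv i \pmod{m}$.

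First I would use this to compute $\Lg_0$. The torus $\Lt$ lies in $\Lg_0$ automatically. A root $\beta$ contributes to $\Lg_0$ precisely when $m \mid \on{ht}\beta$; but since $m$ is the Coxeter number we have $|\on{ht}\beta| \le m-1$, and $\on{ht}\beta \ne 0$ since $\beta \ne 0$, so no root contributes. Hence $\Lg_0 = \Lt$. Next, for $\Lg_1$, the condition $\on{ht}\beta \equiv 1 \pmod{m}$ combined with $|\on{ht}\beta|\le m-1$ forces either $\on{ht}\beta = 1$ (so $\beta$ is a simple root $\alpha_i$) or $\on{ht}\beta = 1-m$ (so $\beta = -\alpha_0$, since the unique positive root of height $m-1$ is the highest root $\alpha_0$). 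This yields the stated decomposition $\Lg_1 = \bigoplus_{i=1}^{\nu_G} \bC X_{\alpha_i} \oplus \bC X_{-\alpha_0}$.

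Finally, to identify $K$: $T$ lies in $K$ because every element of $T$ commutes with $\check\rho_{ad}(\zeta_m)$, and conversely $\on{Lie} K = \Lg_0 = \Lt$ shows $K^0 = T$. Since $G$ is assumed almost simple and simply connected, Steinberg's theorem \cite{St2} guarantees that $K = G^\theta$ is connected, so $K = T$.

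There is no real obstacle — the whole statement is a direct unpacking of the formula for $\theta$ once one knows $\langle \beta, \check\rho_{ad}\rangle = \on{ht}\beta$ and that the Coxeter number equals $\max_{\beta \in \Phi^+}\on{ht}\beta + 1$. The only point that requires a genuine external input (as opposed to a one-line calculation) is the connectedness of $K$, which is the one place where the simply connected hypothesis on $G$ is used.
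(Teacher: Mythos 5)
Your proof is correct and follows essentially the same route as the paper's: both reduce to the key fact $\theta(X_\beta) = \zeta_m^{\on{ht}\beta} X_\beta$ together with the bound $|\on{ht}\beta| \le m-1$ (with equality only for $\pm\alpha_0$), then read off $\Lg_0$ and $\Lg_1$, and deduce $K=T$ from $\on{Lie}K=\Lt$ plus Steinberg's connectedness theorem. You spell out the intermediate identity $\langle\beta,\check\rho_{ad}\rangle = \on{ht}\beta$ that the paper takes as standard, but otherwise the argument is the same.
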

\begin{proof}

Note that $\on{ht}\alpha_0=m-1$. Since for each $\beta\in\Phi$,
\beqn
\theta(X_\beta)=\zeta_m^{\on{ht}\beta}X_\beta,\text{ and }|\on{ht}\beta|\leq m-1,
\eeqn
the lemma follows.
\end{proof}

 We identify 
\beq\label{identify-affine1}
\Lg_1\cong\bC^{\nu_G+1},\ \ \sum_{i=1}^{\nu_G}{a_i}X_{\alpha_i}+a_0X_{-\alpha_0}\mapsto(a_1,\ldots,a_{\nu_G},a_0).
\eeq
Thus, in the $a_i$-coordinates the torus $T$ acts on $\Lg_1$ by the formula 
\beqn
 t.(a_1,\ldots,a_{\nu_G},a_0) =(\alpha_1(t) a_1,\ldots,\alpha_{\nu_G}(t) a_{\nu_G},\alpha_0(t^{-1}) a_0).
\eeqn
We write 
\beq\label{the nis}
\alpha_0=\sum_{i=1}^{\nu_G}n_i\alpha_i.
\eeq Under the identification~\eqref{identify-affine1}
one checks readily that
\beq
\label{invariant}
\bC[\Lg_1]^{K}\cong \bC[a_0\prod_{i=1}^{\nu_G}a_i^{n_i}].
\eeq
Moreover
\beqn
\Lg_1^{rs}=\{\sum_{i=1}^{\nu_G}{a_i}X_{\alpha_i}+a_0X_{-\alpha_0}\mid a_i\neq 0, i=0,1,\ldots,\nu_G\}\cong(\bC^*)^{\nu_G+1}
\eeqn
and so the nilpotent cone $\cN_1$ is the normal crossing divisor given by 
\beqn
\cN_1 \ = \ \bigcup_{i=0}^{\nu_G} \{(a_1,\ldots,a_{\nu_G},a_0)\in \fg_1\mid a_i=0  \}\,.
\eeqn
\begin{lemma}We have
\beqn
I=Z(G)\ \ \text{ and }\ \ W_\fa\cong\bZ/m\bZ.
\eeqn
\end{lemma}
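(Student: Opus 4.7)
The plan is to exploit the explicit weight-space description of the $T$-action on $\fg_1$. Since $K=T$ is abelian and the invariant ring~\eqref{invariant} has Krull dimension one, the Cartan subspace $\fa$ is one-dimensional, so I may write $\fa=\bC x$ for a regular semisimple element $x=\sum_{i=1}^{\nu_G}c_iX_{\alpha_i}+c_0X_{-\alpha_0}$ with all $c_i\neq 0$ (such an element is automatically semisimple by the stable hypothesis, and any one-dimensional subspace of semisimple elements is a Cartan subspace in rank one). Everything then reduces to analyzing when $t\in T$ pointwise fixes or setwise preserves the line $\bC x$, using $t\cdot X_{\alpha_i}=\alpha_i(t)X_{\alpha_i}$ and $t\cdot X_{-\alpha_0}=\alpha_0(t)^{-1}X_{-\alpha_0}$.

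For $I=Z_T(\fa)$, the condition $t\cdot x=x$ forces $\alpha_i(t)=1$ for each simple root (the summands sit in distinct $T$-weight spaces with nonzero coefficients), and $\alpha_0(t)=1$ follows automatically from~\eqref{the nis}. Since the simple roots generate the root lattice, this is precisely the condition for $t$ to be central, yielding $I=Z(G)$.

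For $W_\fa=N_T(\fa)/Z_T(\fa)$, the condition $t\cdot x=\zeta\,x$ for some $\zeta\in\bC^*$ forces $\alpha_i(t)=\zeta$ for all $i$ and $\alpha_0(t)^{-1}=\zeta$. Combined with $\alpha_0=\sum n_i\alpha_i$ and the key numerical input $\sum_i n_i=\on{ht}\alpha_0=m-1$, these give $\zeta^{-1}=\zeta^{m-1}$, hence $\zeta^m=1$. The assignment $t\mapsto\zeta$ is a homomorphism $N_T(\fa)\to\bZ/m\bZ$ with kernel $Z_T(\fa)=I$, producing an injection $W_\fa\hookrightarrow\bZ/m\bZ$.

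To obtain surjectivity, for each $m$-th root of unity $\zeta$ I would take $t=\prod_{j=1}^{\nu_G}\check\omega_j(\zeta)\in T$, where $\check\omega_j$ are the fundamental coweights (which lie in $X_*(T)$ precisely because $G$ is simply connected). A direct pairing computation gives $\alpha_i(t)=\zeta^{\la\alpha_i,\sum_j\check\omega_j\ra}=\zeta$ and $\alpha_0(t)^{-1}=\zeta^{-(m-1)}=\zeta$, so $t$ normalizes $\fa$ and maps to $\zeta$. The only delicate step is this appeal to simple connectedness in the surjectivity argument; everything else is an elementary weight-space computation, so I do not foresee serious obstacles.
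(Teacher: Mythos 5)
Your approach is essentially the same as the paper's: compute $N_T(\fa)$ and $Z_T(\fa)$ for a rank-one Cartan subspace by comparing $T$-weights of the summands $X_{\alpha_i}$, $X_{-\alpha_0}$, using $\alpha_0=\sum n_i\alpha_i$ and $\sum n_i=m-1$. The paper fixes $c_i=1$ and presents the two subgroups explicitly; your version with generic nonzero $c_i$ is an inessential variant of the same computation.

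However, your justification of the surjectivity step is wrong as stated. You assert that the fundamental coweights $\check\omega_j$ lie in $X_*(T)$ ``precisely because $G$ is simply connected.'' This is backwards: for simply connected $G$ one has $X_*(T)=Q^\vee$, the \emph{coroot} lattice, which in general does not contain the fundamental coweights; it is the \emph{adjoint} group for which $X_*(T)=P^\vee$. (For $G=SL_2$, $\check\omega=\check\alpha/2\notin X_*(T)=\bZ\check\alpha$.) So $\prod_j\check\omega_j(\zeta)$ is not a well-defined element of $T$ by the route you indicate. Fortunately, surjectivity of $W_\fa\to\bZ/m\bZ$ holds by a simpler argument that uses no hypothesis on the isogeny type: the simple roots $\alpha_1,\dots,\alpha_{\nu_G}$ are $\bZ$-linearly independent in $X^*(T)$ and $\bC^*$ is divisible, so the homomorphism $T\to(\bC^*)^{\nu_G}$, $t\mapsto(\alpha_1(t),\dots,\alpha_{\nu_G}(t))$, is surjective. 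Hence for each $\zeta\in\mu_m$ there is $t\in T$ with $\alpha_i(t)=\zeta$ for all $i$; such $t$ satisfies $\alpha_0(t^{-1})=\zeta^{1-m}=\zeta$ and therefore normalizes $\fa$ and maps to $\zeta$. Replace your coweight construction with this and the proof is complete.
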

\begin{proof}
A Cartan subspace $\La\subset\Lg_1$ can be chosen as $\fa=\bC(\sum_{i=1}^{\nu_G}X_{\alpha_i}+X_{-\alpha_0})$. We then have
\bern
&&N_{K}(\fa)=\{t\in T\,|\,\alpha_k(t)=\prod_{i=1}^{\nu_G}\alpha_i(t^{-n_i}),\,i=1,\ldots,\nu_G\}\\&&=\{t\in T\mid\alpha_1(t)^m=1,\,\alpha_k(t)=\alpha_{1}(t),k=2,\ldots,\nu_G\};\\
&&Z_{K}(\fa)=\{t\in T\,|\,\alpha_i(t)=1,\,i=1,\ldots,\nu_G\}=Z(G).
\eern
The lemma follows.
\end{proof}
To describe the Kostant slice recall the element $E= \sum_{\alpha \in \Delta} X_\alpha$ of~\eqref{E}. In our coordinates it corresponds to the point $(1, \dots, 1,0)$. Thus, we can choose the Kostant slice $\Ls$ to be
\beq
\label{rank one Kostant}
\Ls \ = \ \{(1, \dots, 1, x)\mid x\in \bC\}\,.
\eeq
Recall the top row in the diagram~\eqref{mainDiagram}:
\beqn
1 \to I=Z(G) \to \pi_1^{T}(\Lg_1^{rs}) \to \pi_1(\fa^{rs}/W_\fa) = \bZ \to 0 \,.
\eeqn
We split this exact sequence by the Kostant slice and obtain an isomorphism
\begin{equation}
\pi_1^{T}(\Lg_1^{rs})\   \cong \ \pi_1(\fa^{rs}/W_\fa) \oplus I = \bZ  \oplus Z(G). 
\end{equation}
To state our theorem in this subsection we will make the description of $\pi_1^T(\Lg_1^{rs})$ very explicit. 
Let us write $\gamma_i$ for the generators of $\pi_1(\fg^{rs})$ given by loops around the axes $a_i=0$. Consider the map $T\to \Lg_1^{rs}\cong(\bC^*)^{\nu_G+1}$
\beqn
t\mapsto t.(1,\ldots,1)=(\alpha_1(t),\ldots,\alpha_{\nu_G}(t),\alpha_0(t^{-1})).
\eeqn
This induces a map
\beq\label{psi0}
\bega
\Psi_0:\pi_1(T)\cong X_*(T)\to\pi_1(\Lg_1^{rs})=(\prod_{i=1}^{\nu_G}\bZ\gamma_i)\times\bZ\gamma_0\\
\lambda\mapsto(\langle\lambda,\alpha_1\rangle\gamma_1, \langle\lambda,\alpha_2\rangle\gamma_2,\ldots,\langle\lambda,\alpha_{\nu_G}\rangle\gamma_{\nu_G},\langle\lambda,-\alpha_0\rangle\gamma_0)\,
\eega
\eeq
where $\langle-,-\rangle:X_*(T)\times X^*(T)\to \bZ$ is the natural pairing. 
By definition $\pi_1^{T}(\Lg_1^{rs})$ is the cokernel of $\Psi_0$, i.e.,  we have an exact sequence
\beqn
0 \to X_*(T) \xrightarrow{\Psi_0} \pi_1(\Lg_1^{rs})=(\prod_{i=1}^{\nu_G}\bZ\gamma_i)\times\bZ\gamma_0 \to \pi_1^{T}(\Lg_1^{rs}) \to 0\,.
\eeqn
The Kostant slice of~\eqref{rank one Kostant} gives us an isomorphism $\Ls^{reg} \cong \fa^{rs}/W_\fa$ and hence an identification $\pi_1(\Ls^{reg} ) \cong \pi_1(\fa^{rs}/W_\fa) = \bZ$. Thus it allows us to identify the element $\gamma_0$ with the element $1\in  \bZ =  \pi_1(\fa^{rs}/W_\fa)$.

We also note that by the formula~\eqref{invariant} the composition of maps 
\begin{subequations}
\label{to braid group}
\beq
\pi_1(\Lg_1^{rs})=(\prod_{i=1}^{\nu_G}\bZ\gamma_i)\times\bZ\gamma_0 \to \pi_1^{T}(\Lg_1^{rs}) \to  \pi_1(\fa^{rs}/W_\fa) = \bZ
\eeq
is given by 
\beq
(b_1, \dots, b_{\nu_G},b_0) \mapsto b_0 + \sum_{i=1}^{\nu_G} n_ib_i \,.
\eeq
\end{subequations}

Let us now consider the center $Z(G)$ of $G$. Recall that we have a canonical identification 
\beqn
Z(G) \ = \ \operatorname{Hom}(X^*(T)/\bZ\!\cdot\!\Phi, \bG_m)\,.
\eeqn
As $X^*(T)/\bZ\!\cdot\!\Phi$ is a finite group we can replace $\bG_m$ with $\bQ/\bZ$ in this equality. This identification is not canonical but depends on choosing an appropriate primitive root of unity. This choice is not so important, but we do make it explicit in working out the formulas later in the paper.  We will identify $\prod_{i=1}^{\nu_G}\bZ\gamma_i$ with the co-weight lattice $\check P\subset \bQ \otimes_\bZ X_*(T)$, where the generators $\gamma_i$ form a basis dual to the simple roots. We now have a canonical map
\beqn
\check P \xrightarrow{\bar\Pi} \operatorname{Hom}(X^*(T)/\bZ\!\cdot\!\Phi, \bQ/\bZ) \ = \ Z(G)
\eeqn
which gives rise to the following exact sequence 
\beq
\label{center}
0  \longrightarrow X_*(T) \xrightarrow{(\alpha_1,\ldots,\alpha_{\nu_G})} \prod_{i=1}^{\nu_G}\bZ\gamma_i =\check P\xrightarrow{\ \overline\Pi\ } Z(G)=I\longrightarrow 0\,.
\eeq
Putting this all together we have
\beqn
\xymatrix{0\ar[r]&X_*(T)\ar[r]^-{(\alpha_1,\ldots,\alpha_{\nu_G})}\ar@{=}[d]&\prod_{i=1}^{\nu_G}\bZ\gamma_i\ar[r]^{\overline\Pi}&I=Z(G)\ar[r]&0
\\
0\ar[r]&X_*(T)\ar[r]^-{\Psi_0}&\pi_1(\Lg_1^{rs})=(\prod_{i=1}^{\nu_G}\bZ\gamma_i)\times\bZ\gamma_0\ar[r]^-{\Pi}\ar@{->>}[u]_{\text{pr}_1}&\pi_1^{K}(\Lg_1^{rs})=\bZ\oplus Z(G)\ar[r]\ar[d]^{\tilde q}\ar[u]_{\text{pr}_2}&0
\\
&&&\pi_1(\fa^{rs}/W_\fa)\cong\bZ\ar[ul]^{\kappa}&}
\eeqn
The Kostant slice $\kappa$ is given by  $\kappa(1)=\gamma_0$. Thus, the map $\text{pr}_2$ is induced by the canonical projection $\text{pr}_1$.  Let us write 
\beq\label{def of gammai}
\bar\gamma_i = \overline\Pi(\gamma_i)\in Z(G),\,i=1, \dots , i_{\nu_G}.
\eeq Then we have, making use of~\eqref{to braid group}, that
\beq
\label{generators}
\Pi(\gamma_0)=(1,\bar 1)\in \bZ\oplus Z(G) \qquad  \Pi(\gamma_i)= (n_i,\bar\gamma_i )\in \bZ\oplus Z(G)\ \  \text{for} \ \ i=1, \dots , i_{\nu_G} \,,
\eeq
where $\bar 1$ stands for the identity element in $Z(G)$. 
 In the following theorem we describe the structure of $\cM_\chi$ as a representation of $ \pi_1^{K}(\Lg_1^{rs})\cong\pi_1(\fa^{rs}/W_\fa) \oplus I = \bZ  \oplus Z(G)$. To that end let us denote the action of $1\in \bZ = \ \pi_1(\fa^{rs}/W_\fa)$ on $\cM_\chi$ by $x$.
\begin{theorem}
\label{coxeter rank one}
We have
\beqn
\cM_\chi \ = \bC_\chi\otimes \big(\bC[x]/R_\chi(x)\big)\,,\ R_\chi(x)=(x-1)\prod_{i=1}^{\nu_G} ( \chi(\bar\gamma_i)x^{n_i}-1)\, ,
\eeqn
where the $n_i$'s are defined in~\eqref{the nis} 
and $\bar\gamma_i$ is defined in~\eqref{def of gammai}. 
\end{theorem}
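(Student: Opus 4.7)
The strategy is to apply the general formula from Section~\ref{sec-local system Mchi} to reduce the theorem to computing the polynomial $R_\chi$, and then to determine $R_\chi$ using the normal crossings structure of $\cN_{-1}$.

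First I would identify all the ingredients that enter the formula of Section~\ref{sec-local system Mchi}. Since $K = T$ is abelian, conjugation by $N_K(\fa) \subset T$ on the central subgroup $I = Z(G)$ is trivial, so $W_\fa$ acts trivially on $I$; hence $W_{\fa,\chi} = W_\fa$ for every $\chi \in \hat I$. Because $W_\fa \cong \bZ/m\bZ$ is cyclic and generated by the distinguished reflection $s$, this forces $e_s = 1$ and $W_{\fa,\chi}^0 = W_\fa$. I would then verify that $\tau$ is trivial: the weights of $T$ on $\fg_1$ are $\alpha_1, \dots, \alpha_{\nu_G}, -\alpha_0$, all vanishing on $Z(G)=I$, so $\tau(t) = \det(t|_{\fg_1}) = 1$ for every $t \in I$. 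With $\widetilde B_{W_\fa}^{\chi,0} = \widetilde B_{W_\fa}$ the induction in the formula collapses, yielding $\cM_\chi \cong \bC_\chi \otimes \bC[x]/R_\chi(x)$ as a $\widetilde B_{W_\fa} \cong \bZ\oplus Z(G)$-module via the Kostant splitting, with $I$ acting by $\chi$ and $x = \sigma_s$ the generator of the $\bZ$-factor (corresponding to $\gamma_0$ via the slice). The theorem therefore reduces to identifying $R_\chi$.

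For the computation of $R_\chi$ I would exploit the normal crossings structure of $\cN_{-1}$ in $\fg_{-1}$. By~\eqref{invariant} the adjoint quotient takes the explicit form $f(a) = a_0\prod a_i^{n_i}$, and $\cN_{-1} = \bigcup_{i=0}^{\nu_G}\{a_i = 0\}$ is a normal crossings divisor. Near a generic smooth point of the component $\{a_i = 0\}$ with $i \geq 1$, $f$ factors locally as $a_i^{n_i}$ times a unit, and the $K$-equivariant extension of $\cL_\chi$ along the Kostant slice has monodromy $\chi(\bar\gamma_i)$ around that hyperplane (as encoded in~\eqref{generators}). The classical nearby cycle calculation for a rank-one local system with monodromy $\lambda$ along $a\mapsto a^n$ produces a monodromy operator whose minimal polynomial is, up to a unit scalar fixed by the conventions of the Fourier transform and the microlocalization, $\chi(\bar\gamma_i)x^{n_i} - 1$. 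The component $\{a_0 = 0\}$ has $n_0 = 1$ and $\bar\gamma_0 = \bar 1$, contributing the factor $x - 1$.

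Finally, I would assemble these local contributions into $R_\chi(x) = (x-1)\prod_{i=1}^{\nu_G}(\chi(\bar\gamma_i)x^{n_i} - 1)$. The total degree $1 + \sum_i n_i$ matches the Coxeter number identity $m = 1 + \sum_i n_i = |W_\fa| = \dim \cH_{W_\fa}$, so the candidate polynomial has the correct degree. The hardest step, and the crux of the argument, is showing that $R_\chi$ equals the full product rather than a proper divisor of it: one must verify that the local minimal polynomials at the distinct components of $\cN_{-1}$ combine multiplicatively into the global minimal polynomial governing $\cM_\chi$. I would establish this by a direct analysis of the stalks of $P_\chi$ along the stratification of the normal crossings divisor, using the transversality of the components; alternatively, one may observe that for a Zariski-dense set of $\chi$ the candidate polynomial has pairwise distinct roots, forcing $\cM_\chi$ to attain its full expected dimension $m$, and then pass to the general case by a semicontinuity argument.
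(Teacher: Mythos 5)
Your setup and reduction are correct: since $K=T$ is abelian, $W_\fa$ acts trivially on $I=Z(G)$, so $W_{\fa,\chi}=W_{\fa,\chi}^0=W_\fa$ and $e_s=1$; and $\tau$ is trivial because roots vanish on the center. The identification of the monodromy operators $\mu_0=x$ and $\mu_i=\chi(\bar\gamma_i)x^{n_i}$ via~\eqref{generators} also matches the paper. The gap is in the step you yourself flag as the crux.

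Your first proposed route (``direct analysis of stalks'') is not spelled out, and your second route is flawed. $\hat I = \on{Hom}(Z(G),\bG_m)$ is a \emph{finite} set, so ``a Zariski-dense set of $\chi$'' has no meaning; and even ignoring that, the candidate polynomial $R_\chi(x)=(x-1)\prod(\chi(\bar\gamma_i)x^{n_i}-1)$ almost never has distinct roots (e.g.\ for trivial $\chi$ the root $x=1$ has multiplicity $\nu_G+1$). So the semicontinuity argument cannot get started.

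The paper closes this step by a completely different mechanism. First it proves a general proposition, using the Galligo--Granger--Maisonobe quiver description of perverse sheaves on a normal crossings divisor: taking $B_J=\operatorname{Im}\big(\prod_{i\in J}(\mu_i-1)\big)$ for $J\subset\{0,\dots,\nu_G\}$ realises the IC extension, and the hypothesis that $\fF\cF$ is supported exactly on $D$ forces $B_{\{0,\dots,\nu_G\}}=0$ while $B_{\{0,\dots,\nu_G\}\setminus\{j\}}\neq 0$; this yields that $R_\chi(x)$ annihilates $\cM_\chi$. Second — and this is the fact your proposal should invoke but does not — the general machinery of~\cite{GVX2} already guarantees that $\cM_\chi$ is a \emph{cyclic} $\bC[x]$-module of dimension $m=|W_\fa|$. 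For a cyclic module the minimal polynomial has degree equal to the dimension, so an annihilating polynomial of degree $m$ is automatically the minimal polynomial. You notice the degree identity $m=1+\sum n_i$, but you treat dimension $m$ as something to be ``forced'' rather than as an input from the general theory; using it directly makes both of your proposed workarounds unnecessary and also removes the need to show that the product cannot be shortened.
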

The polynomials $R_\chi$ are calculated explicitly in each type in~\S\ref{cal-cox}.

We make use of the  following 
\begin{prop}
Let us consider an $IC$-sheaf $\cF$ on $\bC^n$ associated to a  local system on $\bC^n-D$ where $D$ is the normal crossing divisor consisting of the coordinate hyperplanes. We write $\mu_i$ for the monodromies around the coordinate hyperplanes $D_i$. If the support of the Fourier transform of $\cF$ is precisely $D$ then the actions of $\mu_i$ satisfy
$$
\prod_{i=1}^{n} ( \mu_i -1)  = 0 \qquad \text{but for all }j\qquad   \prod_{i,\,i\neq j} (\mu_i -1)  \neq 0 \,.
$$
\end{prop}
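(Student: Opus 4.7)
My plan is to analyze $\on{supp}(\fF\cF)$ by decomposing the local system into joint generalized eigenspaces of the commuting monodromies $\mu_1,\dots,\mu_n$ acting on the stalk of $\cL$ at a base point.

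Decompose the stalk $M = \bigoplus_\lambda M_\lambda$ according to joint generalized eigenvalues $\lambda = (\lambda_1,\dots,\lambda_n) \in (\bC^*)^n$ of $(\mu_1,\dots,\mu_n)$, giving $\cL = \bigoplus_\lambda \cL_\lambda$ and hence $\cF = \bigoplus_\lambda \IC(\bC^n,\cL_\lambda)$, so that
\beqn
\on{supp}(\fF\cF) = \bigcup_{\lambda:\,M_\lambda \neq 0} \on{supp}\bigl(\fF\IC(\bC^n,\cL_\lambda)\bigr).
\eeqn
The key claim is that, writing $I_\lambda = \{i : \lambda_i = 1\}$ and $V^J = \{y \in \bC^n : y_i = 0\ \forall i \in J\}$, we have $\on{supp}(\fF\IC(\bC^n,\cL_\lambda)) = V^{I_\lambda}$.

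To verify the claim in the semisimple case---which suffices for the applications in this paper, since the $\mu_i$ come from a character of the finite group $I$ and hence have roots-of-unity eigenvalues---observe that $\cL_\lambda$ is a sum of copies of the rank-one Kummer sheaf $\cT_\lambda = \cT_{\lambda_1}\boxtimes\cdots\boxtimes\cT_{\lambda_n}$. Since both the intermediate extension $j_{!*}$ and the topological Fourier transform commute with external tensor products, the claim reduces to the one-variable statement: for $\lambda_i = 1$, $\IC(\bC,\cT_1) = \bC_\bC[1]$ has Fourier transform $\bC_0$ supported at the origin; for $\lambda_i \neq 1$, the extension $\IC(\bC,\cT_{\lambda_i}) = j_!\cT_{\lambda_i}[1] = j_*\cT_{\lambda_i}[1]$ is clean, and its Fourier transform is a non-trivial perverse sheaf with generic non-zero stalks, hence with support all of $\bC$.

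Combining these, the hypothesis $\on{supp}(\fF\cF) = D$ is equivalent to: (i) no $\lambda$ with $M_\lambda\neq 0$ satisfies $I_\lambda = \emptyset$, otherwise $V^\emptyset = \bC^n$ would appear in the support; and (ii) for each $j$, some $\lambda$ with $M_\lambda \neq 0$ satisfies $I_\lambda = \{j\}$, so that $V^{\{j\}} = D_j$ appears. In the semisimple setting, (i) translates to every joint eigenvalue having some $\lambda_i = 1$, i.e.\ $\prod_i(\mu_i - 1) = 0$ on $M$; and, given (i), (ii) translates to the existence of some $\lambda$ with $\lambda_j = 1$ and $\lambda_i \neq 1$ for $i \neq j$, equivalently $\prod_{i\neq j}(\mu_i - 1) \neq 0$ on $M$. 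The main technical hurdle is the rigorous verification of the support formula for each eigenspace piece; the external tensor product argument makes this transparent in the semisimple case, which is what is needed here.
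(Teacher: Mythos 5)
Your approach---decompose the local system into joint generalized eigenspaces of the commuting monodromies, then use the external tensor product structure of IC and Fourier transform to reduce to the one-variable case---is genuinely different from the paper's, which instead invokes the Galligo--Granger--Maisonobe quiver description of perverse sheaves on a normal crossing divisor and identifies the microlocalization along $D_J$ as $B_J = \operatorname{Im}\!\left(\prod_{i\in J}(\mu_i-1)\right)$, from which both assertions drop out immediately via the characterization of the IC extension (all $p$'s surjective, all $q$'s injective). Your argument is cleaner where it applies, but it has a real gap.

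The gap is that you only establish the key support formula $\operatorname{supp}(\fF\,\on{IC}(\bC^n,\cL_\lambda)) = V^{I_\lambda}$ in the semisimple case, and then assert that the semisimple case is what is needed. Both halves of that are problematic. First, the formula is simply false when the monodromies on $M_\lambda$ are not semisimple: already for $n=1$ and $\cL_\lambda$ of rank $2$ with a single unipotent Jordan block, $I_\lambda=\{1\}$ would predict support $\{0\}$, but $\on{IC}(\bC,\cL_\lambda[1])$ has nonzero vanishing cycle at the origin (its dimension is $\operatorname{rank}\operatorname{Im}(\mu-1)=1$), so the Fourier transform has support all of $\bC$. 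Second, your justification that semisimplicity suffices for the paper's applications---because the $\mu_i$ ``come from a character of the finite group $I$''---misreads the application. In the proof of Theorem~\ref{coxeter rank one}, one has $\mu_0 = x$ and $\mu_i = \chi(\bar\gamma_i)x^{n_i}$, where $x$ is the a priori unknown monodromy whose minimal polynomial is being determined; the conclusion is $R_\chi(x)=(x-1)\prod(\chi(\bar\gamma_i)x^{n_i}-1)$, which has repeated roots (e.g.\ $(x-1)^{\nu_G+1}$ divides it when $\chi$ is trivial), so $x$ is emphatically not semisimple. The proposition must therefore hold, and be usable, for non-semisimple monodromies; a two-dimensional example where all monodromies are unipotent, $\mu_1 = \mu_2 = \begin{pmatrix}1&1\\0&1\end{pmatrix}$ on $M=\bC^2$, satisfies the hypothesis (one checks $(\mu_1-1)(\mu_2-1)=0$ while $\mu_1-1\neq 0$, $\mu_2-1\neq 0$) but is invisible to the generalized-eigenspace decomposition, which sees only the single piece $M_{(1,1)}$. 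To repair the argument you would need to control the vanishing cycle spaces of the IC extension directly in terms of the $\mu_i$ for arbitrary (commuting, possibly non-semisimple) monodromy, which is exactly what the GGM quiver description supplies and what your external-tensor-product devissage does not.
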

\begin{proof}

Let us recall the classification of perverse sheaves in the normal crossing situation, i.e., the perverse sheaves that are constructible with respect to the stratification given by intersections of the coordinate hyperplanes. We consider the quiver whose vertices are subsets $J$ of $\{1, \dots, n\}$ and whose arrows are given as follows. If $J\subset J'$ and $|J'|=|J|+1$ then we have morphisms $q_{J,J'}: J' \to J$ and $p_{J',J}: J \to J'$ subject to the following conditions:
\begin{subequations}
\beq
\text{$\mu_{J',J} =1 + q_{J,J'}p_{J',J}$\ \  and \ \ $\mu_{J,J'}=1 + p_{J',J}q_{J,J'}$\ \ \  are invertible}
\eeq
\beq
\label{commute}
\bega
\text{For $J\subset J'$, $|J'|=|J|+2$  and $J\subset J''\subset J'$ $J\subset J'''\subset J'$  we have }
\\
p_{J',J''}p_{J'',J}=p_{J',J'''} p_{J''',J}\ \ \   q_{J,J''} q_{J'',J'}=q_{J,J'''}q_{J''',J'} \ \ \ q_{J''',J'}p_{J',J''}=p_{J''',J}q_{J,J''}
\eega
\eeq
\end{subequations}
There is a functor which associates to a perverse sheaf $\cF$, constructible with respect to the normal crossings divisor, the representation of the above quiver given by its various microlocalizations as follows. To each $J\subset\{1,\ldots,n\}$ we associate the vector space $B_J$ which is the (generic) microlocalization along the stratum $D_J = \cap_{i\in J} D_i$. The maps $q_{J,J'}$ and $p_{J',J}$ are constructed in \cite{GGM} and depend on a particular (natural) identification of various fundamental groups. 

Let us now consider the IC-sheaf $\cF$ of the statement. The local system is given by $B_\emptyset$ and the monodromies on it are given by $\mu_i :=\mu_{\{i\},\emptyset}= 1 + q_{\emptyset,\{i\}} p_{\{i\},\emptyset}$. The IC extension is characterized by the property that all the $p_{J',J}$ are surjections and all the  $q_{J,J'}$ are injections. Making use of equations~\eqref{commute} one concludes that we can choose
\beqn
\bega
B_J \ = \ \operatorname{Im}( \prod_{i \in J} (\mu_i -1) )\,
\\
\text{and for $J'=J\cup\{i\}$ \ \ $p_{J',J}=\mu_i-1$ \ \ and \ \  $q_{J,J'}: B_{J'}\to B_J$ \  is the inclusion}\,.
\eega
\eeqn
Let us temporarily write $L=\{1, \dots, n\}$. 
The fact that the Fourier transform does not have the conormal bundle at the origin in its support implies that the IC-sheaf has no vanishing cycles at the origin, i.e., that  $B_L=0$. The fact that the support of the Fourier transform is precisely $D$ means that $\cF$ does have vanishing cycles generically along the coordinate hyperplanes, i.e., that  $B_{L-\{i\}}\neq 0$ for any $i\in L$. This gives us the conclusion. 
\end{proof}

\begin{proof}[Proof of Theorem~\ref{coxeter rank one}.] We apply the previous proposition in our setting. Let us recall formulas~\eqref{generators} and the fact that we write $x$ for the action of $\bar\Pi(\gamma_0)$ on $\cM_\chi$. Thus we set $\mu_0= x$ and $\mu_i=  \chi(\bar\gamma_i)x^{n_i}$ for $i\neq 0$. 
Applying this proposition to our situation we conclude that
$$
R_\chi(x)=(x-1)\prod_{i=1}^{\nu_G} ( \chi(\bar\gamma_i)x^{n_i}-1) \,.
$$
Recall that  $\cM_\chi$ is a rank $m$ cyclic module over $\bZ = \ \pi_1(\fa^{rs}/W_\fa)$. Since the polynomial $R_\chi(x)$ is of degree $m$ (recall that
$m=1+\sum_{i=1}^{\nu_G}n_i$),   we conclude that it is the minimal polynomial (and at the same time the characteristic polynomial) of the transformation $x$. Thus, the theorem follows.
\end{proof}

\subsection{The twisted Coxeter case}\label{sec-tcox}
In this subsection, let $\theta=\on{Int}\check\rho_{ad}(\zeta_m)\circ\vartheta$ (see~\eqref{stable gradings}), where $m$ is the twisted Coxeter number and $\vartheta$ is a non-trivial pinned automorphism. 

Let $\sigma\in\on{Aut}\Phi$ be the automorphism induced by $\vartheta$, i.e., $\vartheta(X_{\alpha_i})=X_{\sigma\alpha_i}$, $\alpha_i\in\Delta$. Let $e=\on{ord}\vartheta$. Then $e=2$ in type $A_n$, $n\geq 2$, $D_n$, $n\geq 3$ or $E_6$, or $e=3$ in type $D_4$. We say that $(G,\vartheta)$ is of type $A_n^2$, $D_n^2$, $E_6^2$, $D_4^3$ respectively.

Let us write $n_{\vartheta}$ for the number of $\sigma$-orbits in $\Delta$, and $\Delta_i$, $i=1,\ldots,n_{\vartheta}$ for the $\sigma$-orbits in $\Delta$. Let
\beq\label{simple roots reps}
\{\alpha_1,\ldots,\alpha_{n_{\vartheta}}\}\text{ be a set of representatives of the $\sigma$-orbits in $\Delta$.}
\eeq
We define 
\beq\label{def-yi-1}
Y_i=\sum_{\alpha\in\Delta_i}X_\alpha.
\eeq 
Consider the set $\Sigma_0$ consisting of roots of height $m/e-1$ in $\Phi$. One checks easily that $\Sigma_0$ forms a single $\sigma$-orbit.  Let $\beta_0\in\Sigma_0$. We define
\beq\label{def of y0}
Y_0=\sum_{i=0}^{e-1}\zeta_m^{-im/e}\vartheta^i(X_{-\beta_0})
\eeq
 Note that $\Sigma_0$ consists of $e$ elements except in the case of $A_{2n}^2$. In the latter case $\Sigma_0=\{\beta_0\}$ and $\vartheta(X_{-\beta_0})=-X_{-\beta_0}$. Thus $Y_0\neq 0$ and the subspace spanned by $Y_0$ does not depend on the choice of $\beta_0$ in $\Sigma_0$.

\begin{lemma}
We have
\beqn\label{g0 and g1}
\bega
\Lg_0=\Lt^{\vartheta}=\{t\in\Lt\,|\,\vartheta(t)=t\},\quad
\Lg_1=\bigoplus_{i=0}^{n_\vartheta}\bC Y_i,\text{ and }K=T^{\vartheta}.
\eega
\eeqn
where $Y_i$, $i=1,\ldots,n_\vartheta$ are defined in~\eqref{def-yi-1}, and $Y_0$ is defined in~\eqref{def of y0}.
\end{lemma}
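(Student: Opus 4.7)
The plan is to diagonalize $\theta = \on{Int}(h) \circ \vartheta$, with $h = \check\rho_{ad}(\zeta_m)$, on $\Lg$ and read off the $1$- and $\zeta_m$-eigenspaces. On $\Lt$ the element $h$ acts trivially, so $\theta|_\Lt = \vartheta|_\Lt$ and hence $\Lt \cap \Lg_0 = \Lt^\vartheta$. Outside the Cartan, I would decompose the sum of root spaces into $\vartheta$-stable blocks $V_O = \bigoplus_{\beta \in O} \Lg_\beta$ indexed by $\sigma$-orbits $O$ in $\Phi$; on each such block $\theta$ cyclically permutes the one-dimensional summands with a scalar twist of $\zeta_m^{\on{ht}\beta}$, so its eigenvalues are determined by the height of $\beta$ and the size of $O$.

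Next I would verify directly that $Y_1, \dots, Y_{n_\vartheta}, Y_0$ lie in $\Lg_1$. For each $i \in \{1, \dots, n_\vartheta\}$, since $\vartheta$ permutes $\Delta_i$ and every simple root has height $1$, a one-line calculation yields $\theta(Y_i) = \on{Int}(h)(Y_i) = \zeta_m Y_i$. For $Y_0$, shifting the summation index (using $\vartheta^e = 1$) gives $\vartheta(Y_0) = \zeta_m^{m/e} Y_0$; each summand $\vartheta^i(X_{-\beta_0})$ lives in $\Lg_{-\sigma^i \beta_0}$, which has height $1 - m/e$, so $\on{Int}(h)$ rescales it by $\zeta_m^{1 - m/e}$, giving $\theta(Y_0) = \zeta_m Y_0$. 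The $n_\vartheta + 1$ vectors so produced are linearly independent since they are supported on distinct $\sigma$-orbits of roots.

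To upgrade the inclusions $\Lt^\vartheta \subseteq \Lg_0$ and $\bigoplus_i \bC Y_i \subseteq \Lg_1$ to equalities, I would invoke Springer's theory of regular elements, as used in~\cite{RLYG}: because $m$ is the twisted Coxeter number, the element $\check\rho_{ad}(\zeta_m)\vartheta$ is elliptic regular of order $m$ in $W\vartheta$, so the $\zeta_m$-eigenspace of $\theta$ on $\Lg$ has dimension exactly $n_\vartheta + 1$ while $\Lg_0$ has dimension $\dim \Lt^\vartheta = n_\vartheta$. A dimension count then closes both containments. Finally $K = T^\vartheta$ follows because $G$ simply connected forces $K = G^\theta$ to be connected (Steinberg), $\on{Lie} K = \Lg_0 = \Lt^\vartheta = \on{Lie}(T^\vartheta)$, and $T^\vartheta$ is itself connected for the fundamental torus chosen here.

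The main subtlety I anticipate is the case $A_{2n}^2$, where $\sigma$ fixes $\beta_0$ as a root but $\vartheta(X_{-\beta_0}) = -X_{-\beta_0}$. One must check that the uniform formula for $Y_0$ still produces a nonzero $\theta$-eigenvector: this works because $m/e = 2n+1$ is odd, so $\zeta_m^{-m/e} = -1$ and $Y_0 = 2X_{-\beta_0} \neq 0$, and the eigenvalue computation above remains valid by the same sign consideration.
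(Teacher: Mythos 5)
Your proposal is correct and, at the level of ideas, is close to the paper's argument: both prove $Y_i\in\Lg_1$ by the same direct eigenvalue computation, both identify $\Lt^\vartheta\subseteq\Lg_0$ by noting that $\on{Int}(\check\rho_{ad}(\zeta_m))$ is trivial on $\Lt$, both use a dimension count to promote $\bigoplus\bC Y_i\subseteq\Lg_1$ to an equality, and both deduce $K=T^\vartheta$ from connectedness plus the Lie-algebra identification. The one place where your route genuinely diverges is the equality $\Lg_0=\Lt^\vartheta$: the paper establishes it by an explicit root-space computation, writing down the only potential contribution outside $\Lt$ (from roots $\alpha$ with $\tfrac{m}{2}+\on{ht}\alpha\equiv 0\bmod m$) and checking case by case that it vanishes, whereas you delegate this to the general theory of regular elements from~\cite{RLYG}, asserting $\dim\Lg_0=n_\vartheta$. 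That appeal is legitimate but should be an explicit citation rather than a loose reference to ``Springer's theory''; without it, the claim $\dim\Lg_0=n_\vartheta$ is exactly the content being proved. Your blockwise decomposition of $\bigoplus_\beta\Lg_\beta$ in the first paragraph is set up but never actually used once you switch to the dimension count, so you could drop it. One small quibble in your $A_{2n}^2$ remark: $\zeta_m^{-m/e}=\zeta_e^{-1}$ equals $-1$ because $e=2$, not because $m/e=2n+1$ is odd; the parity of $m/e$ plays no role there.
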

\begin{proof}

Note that $\on{ht}\alpha<m$ for all $\alpha\in\Phi$. 
We have
\beqn
\bega
\Lg_0=\Lt^{\vartheta}\oplus\on{span}\left\{X_\alpha-\vartheta(X_\alpha)\mid \frac{m}{2}+\on{ht}\alpha\equiv 0\mod m\right\}.
\eega
\eeqn
Let us write $\Sigma_1=\{\alpha\in\Phi\mid\frac{m}{2}+\on{ht}\alpha\equiv 0\mod m\}$. Then it is easy to check that $\Sigma_1=\emptyset$  in type $A_{2n-1}^2$, and $\sigma(\alpha)=\alpha$ for all $\alpha\in\Sigma_1$ in other types. The claim on $\Lg_0$ follows  in type $A_{2n-1}^2$, and follows  from the fact that $\vartheta(X_\alpha)=X_\alpha$ for each $\alpha$ satisfying $\sigma(\alpha)=\alpha$ in other types. One checks readily that $Y_i\in\Lg_1$, $i=0,\ldots,n_\vartheta$. Note that $\dim\Lg_0=\dim\Lt^\vartheta=n_\vartheta$. Since $\on{dim}\Lg_1=\on{dim}\Lg_0+1$, the claim on $\Lg_1$ follows. 

Since $T^\vartheta$ is connected (as $G$ is simply connected) and $\dim\Lg_0=\dim\Lt^\vartheta$, it follows that
$
K=T^\vartheta.
$
\end{proof}

We will make use of  the identification 
 \beq\label{identification-twisted}
 \Lg_1\cong\bC^{n_\vartheta+1},\ \sum_{i=1}^{n_\vartheta}a_iY_i+a_0Y_0\mapsto (a_1,\ldots,a_{n_{\vartheta}},a_0).
 \eeq
 Thus, in the $a_i$-coordinates the torus $T^\vartheta$ acts on $\Lg_1$ by the formula 
\beqn
 t.(a_1,\ldots,a_{\nu_G},a_0) =(\alpha_1(t) a_1,\ldots,\alpha_{n_\vartheta}(t) a_{n_\vartheta},\beta_0(t^{-1}) a_0),
\eeqn
where $\alpha_i$'s are defined in~\eqref{simple roots reps} (note that the above formula does not depend on the choice of $\alpha_i$'s). One then readily checks that 
\beq
\label{outer invariant}
\bC[\Lg_1]^K\cong\bC[f_0], \ \ f_0=a_0\prod_{i=1}^{n_\vartheta}a_i^{m_i},
\eeq 
where the $m_i$, $i=1,\ldots,n_\vartheta$, are defined by the equation
\beq\label{mis}
\beta_0|_{T^\vartheta}=\sum_{i=1}^{n_\vartheta}m_i\alpha_i|_{T^\vartheta}.
\eeq
Note that $m_i$'s are well-defined. We remark that the powers appearing in the invariant polynomial $f_0$ are exactly the labels in the twisted affine Dynkin diagram. 

Under the identification~\eqref{identification-twisted}, we have
\beqn
\Lg_1^{rs}=\{\sum_{i=1}^{n_\vartheta}{a_i}Y_{i}+a_0Y_0\mid a_i\neq 0, i=0,1,\ldots,n_\vartheta\}\cong(\bC^*)^{n_\vartheta+1}.
\eeqn
and so the nilpotent cone $\cN_1$ is the normal crossing divisor given by 
\beqn
\cN_1 \ = \ \bigcup_{i=0}^{n_\vartheta} \{(a_1,\ldots,a_{n_\vartheta},a_0)\in \fg_1\mid a_i=0  \}\,.
\eeqn
\begin{lemma}
We have
\beqn
I=Z(G)^\vartheta\text{ and }W_\fa\cong\bZ/\frac{m}{e}\bZ.
\eeqn
\end{lemma}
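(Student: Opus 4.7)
The plan is to mimic the untwisted Coxeter computation (cf.\ the preceding lemma) by choosing the Cartan subspace
\beqn
\fa \ = \ \bC \cdot \xi, \qquad \xi \ = \ Y_0 + \sum_{i=1}^{n_\vartheta} Y_i,
\eeqn
and then computing $Z_K(\fa)$ and $N_K(\fa)$ explicitly inside $K = T^\vartheta$ using the weight decomposition of $\fg_1$ for the torus action. Legitimacy of this choice of Cartan subspace follows from the general stable rank-one theory of~\cite{RLYG}, together with the fact that $\xi$ is regular semisimple (it is the image of a Kostant slice point, cf.~\eqref{rank one Kostant}).

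First I would compute $I = Z_K(\fa)$. For $t \in T^\vartheta$, the equation $t \cdot \xi = \xi$ rewrites, under the identification~\eqref{identification-twisted}, as $\alpha_i(t) = 1$ for $i=1,\ldots,n_\vartheta$ together with $\beta_0(t)=1$. Since $t \in T^\vartheta$, the character relation $\vartheta^* = \sigma^{-1}$ on $X^*(T)$ gives $\alpha(t) = (\sigma^k \alpha)(t)$ for all $k$, so imposing $\alpha_i(t) = 1$ on orbit representatives is equivalent to $\alpha(t)=1$ for every simple root. As $G$ is simply connected semisimple, this forces $t \in Z(G)$, and then $\beta_0(t) = 1$ is automatic. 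Hence $I = Z(G) \cap T^\vartheta = Z(G)^\vartheta$.

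Next, for $W_\fa = N_K(\fa)/Z_K(\fa)$, the normalizer condition $t \cdot \xi \in \fa$ is equivalent to
\beqn
\alpha_1(t) = \cdots = \alpha_{n_\vartheta}(t) = \beta_0(t)^{-1}.
\eeqn
Substituting into the defining identity $\beta_0|_{T^\vartheta} = \sum_{i=1}^{n_\vartheta} m_i \, \alpha_i|_{T^\vartheta}$ of~\eqref{mis}, I get $\alpha_1(t)^{1 + \sum m_i} = 1$. Here the key numerical input is $\sum_i m_i = m/e - 1$: writing $\beta_0 = \sum_{\alpha \in \Delta} c_\alpha \alpha$ as a character of $T$, one has $\sum c_\alpha = \on{ht}\beta_0 = m/e - 1$ (by definition of $\Sigma_0$), and the restriction formula identifies $m_i = \sum_{\alpha \in \Delta_i} c_\alpha$, so the two sums coincide. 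Therefore $\alpha_1(t)^{m/e} = 1$, and the assignment $t \mapsto \alpha_1(t)$ descends to an injection $W_\fa \hookrightarrow \mu_{m/e} \cong \bZ/(m/e)\bZ$.

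Finally, I would establish surjectivity via invariant theory rather than by constructing an explicit lift. Chevalley restriction gives $\bC[\fg_1]^K \cong \bC[\fa]^{W_\fa}$, and by~\eqref{outer invariant} the left-hand side is freely generated by $f_0$, a polynomial of degree $1 + \sum m_i = m/e$. Since $\fa$ is one-dimensional, $\bC[\fa]^{W_\fa}$ is generated by the $|W_\fa|$-th power of a coordinate. Matching degrees yields $|W_\fa| = m/e$, so the injection above is an isomorphism and $W_\fa \cong \bZ/(m/e)\bZ$. The only nontrivial point in the argument is the height calculation $\sum m_i = m/e - 1$; everything else is a direct unwinding of the action of $T^\vartheta$ on the explicit basis $\{Y_0,Y_1,\ldots,Y_{n_\vartheta}\}$ of $\fg_1$.
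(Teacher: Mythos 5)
Your proof is correct and takes essentially the same route as the paper: choose $\fa=\bC(\sum Y_i+Y_0)$, compute $Z_K(\fa)$ and $N_K(\fa)$ inside $T^\vartheta$ via the explicit torus action on the coordinates $(a_1,\dots,a_{n_\vartheta},a_0)$, and use the relation $\beta_0|_{T^\vartheta}=\sum m_i\alpha_i|_{T^\vartheta}$ with $\sum m_i=m/e-1$ to get $\alpha_1(t)^{m/e}=1$. The only (harmless) divergence is that you close the surjectivity step via Chevalley restriction and the degree of $f_0$, whereas the paper simply records the description of $N_K(\fa)$ and $Z_K(\fa)$ and concludes; both amount to the same count $|W_\fa|=m/e$.
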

\begin{proof}
We choose a Cartan subspace $\fa=\bC(\sum_{i=1}^{n_\vartheta}Y_i+Y_0)$. Then we have
\beqn
\bega
N_{K}(\fa)=\{t\in T^\vartheta\,|\,\alpha_1(t)=\cdots=\alpha_{n_{\vartheta}}(t),\ \alpha_1(t)^{\frac{m}{e}}=1\}\\
Z_{K}(\fa)=\{t\in T^\vartheta\,|\,\alpha_1(t)=\cdots=\alpha_{n_{\vartheta}}(t)=1\}=Z(G)^\vartheta.
\eega
\eeqn
The lemma follows.
\end{proof}

The element $E$ of~\eqref{E} can be written as  $E= \sum_{\alpha \in \Delta} X_\alpha=\sum_{i=1}^{n_\vartheta} Y_i$. In our coordinates it corresponds to the point $(1, \dots, 1,0)$. Thus, we can choose the Kostant slice $\Ls$ to be
\beq
\label{rank one Kostant outer}
\Ls \ = \ \{(1, \dots, 1, y)\mid y\in \bC\}\,.
\eeq
Recall the top row in the diagram~\eqref{mainDiagram}:
\beqn
1 \to I=Z(G)^\vartheta \to \pi_1^{T^\vartheta}(\Lg_1^{rs}) \to \pi_1(\fa^{rs}/W_\fa) = \bZ \to 1 \,.
\eeqn
We split this exact sequence by the Kostant slice and obtain an isomorphism
\begin{equation*}
\pi_1^{T^\vartheta}(\Lg_1^{rs}) \   \cong \ \pi_1(\fa^{rs}/W_\fa) \oplus Z(G)^\vartheta = \bZ  \oplus Z(G)^\vartheta. 
\end{equation*}
As in the previous subsection we will make the description of $\pi_1^{T^\vartheta}(\Lg_1^{rs})$  very explicit. 
Let us write $\gamma_i$ for the generators of $\pi_1(\fg_1^{rs})$ given by loops around the axes $a_i=0$. 

Consider the map $T^\vartheta\to \Lg_1^{rs}\cong(\bC^*)^{n_\vartheta+1}$
\beqn
t\mapsto t.(1,\ldots,1)=(\alpha_1(t),\ldots,\alpha_{n_\vartheta}(t),\beta_0(t^{-1})).
\eeqn
This induces a map
\beqn\label{psi1}
\bega
\Psi_1:\pi_1(T^\vartheta)\cong X_*(T)^\vartheta\to\pi_1(\Lg_1^{rs})\cong\bZ^{n_\vartheta}=(\prod_{i=1}^{n_\vartheta}\bZ\gamma_i)\times\bZ\gamma_0\\
\lambda\mapsto(\langle\lambda,\alpha_1\rangle\gamma_1, \langle\lambda,\alpha_2\rangle\gamma_2,\ldots,\langle\lambda,\alpha_{n_\theta}\rangle\gamma_{n_\vartheta},\langle\lambda,-\beta_0\rangle\gamma_0).
\eega
\eeqn
By definition $\pi_1^{T^\vartheta}(\Lg_1^{rs}) $ is the cokernel of $\Psi_1$, i.e.,  we have an exact sequence
\beqn
0 \to X_*(T)^\vartheta \xrightarrow{\Psi_1} \pi_1(\Lg_1^{rs})=(\prod_{i=1}^{n_\vartheta}\bZ\gamma_i)\times\bZ\gamma_0 \to\pi_1^{T^\vartheta}(\Lg_1^{rs})  \to 0\,.
\eeqn
The Kostant slice of~\eqref{rank one Kostant outer} gives us an isomorphism $\Ls^{reg} \cong \fa^{rs}/W_\fa$ and hence an identification $\pi_1(\Ls^{reg} ) \cong \pi_1(\fa^{rs}/W_\fa) = \bZ$. Thus it allows us to identify the element $\gamma_0$ with the element $1\in  \bZ =  \pi_1(\fa^{rs}/W_\fa)$.

We also note  that because of the formula~\eqref{outer invariant} the map 
\begin{subequations}
\label{E2}
\beq
\pi_1(\Lg_1^{rs})=(\prod_{i=1}^{n_\vartheta}\bZ\gamma_i)\times\bZ\gamma_0 \to \pi_1^{{T^\vartheta}}(\Lg_1^{rs}) \to  \pi_1(\fa^{rs}/W_\fa) = \bZ
\eeq
is given by 
\beq
(b_1, \dots, b_{n_\vartheta},b_0) \mapsto b_0 + \sum_{i=1}^{n_\vartheta} m_ib_i \,.
\eeq
\end{subequations}
Applying $\vartheta$ to the exact sequence~\eqref{center}
we obtain an exact sequence
\beqn
1  \longrightarrow X_*(T)^\vartheta \xrightarrow{(\alpha_1,\ldots,\alpha_{n_\vartheta})} \prod_{i=1}^{n_\vartheta}\bZ\gamma_i \longrightarrow Z(G)^\vartheta=I\longrightarrow 1\,.
\eeqn
Putting this all together we have
\beqn
\xymatrix{0\ar[r]&X_*(T)^\vartheta\ar[r]^-{(\alpha_1,\ldots,\alpha_{n_\vartheta})}\ar@{=}[d]&\prod_{i=1}^{n_\vartheta}\bZ\gamma_i\ar[r]^{\overline\Pi}&I=Z(G)^\vartheta\ar[r]&0
\\
0\ar[r]&X_*(T)^\vartheta\ar[r]^-{\Psi_1}&\pi_1(\Lg_1^{rs})=(\prod_{i=1}^{n_\vartheta}\bZ\gamma_i)\times\bZ\gamma_0\ar[r]^-{\Pi}\ar@{->>}[u]_{\text{pr}_1}&\pi_1^{T^\vartheta}(\Lg_1^{rs})=\bZ\oplus Z(G)^\vartheta\ar[r]\ar[d]^{\tilde q}\ar[u]_{\text{pr}_2}&0
\\
&&&\pi_1(\fa^{rs}/W_\fa)\cong\bZ\ar[ul]^{\kappa}&}
\eeqn
The Kostant slice $\kappa$ is given by  $\kappa(1)=\gamma_0$. Thus, the map $\text{pr}_2$ is induced by the canonical projection $\text{pr}_1$.  Let us write
\beq\label{def of gammai in twisted case}
\bar\gamma_i = \overline\Pi(\gamma_i)\in Z(G)^\vartheta,\,i=1, \dots , {n_\vartheta}.
\eeq  Then we have, making use of~\eqref{E2}, that
\beqn
\label{generators-outer}
\Pi(\gamma_0)=(1,\bar 1)\in \bZ\oplus Z(G)^\vartheta \qquad  \Pi(\gamma_i)= (m_i,\bar\gamma_i )\in \bZ\oplus Z(G)^\vartheta\ \  \text{for} \ \ i=1, \dots , i_{n_\vartheta}, 
\eeqn
where $\bar 1$ denotes the identity element in $Z(G)^\vartheta$ and $m_i$'s are defined in~\eqref{mis} (see also~\eqref{outer invariant}).

 In the following theorem we describe the structure of $\cM_\chi$ as a representation of $ \pi_1^K(\Lg_1^{rs}) = \pi_1(\fa^{rs}/W_\fa)\oplus Z(G)^\vartheta=\bZ  \oplus Z(G)^\vartheta$. To that end let us denote the action of $1\in \bZ = \ \pi_1(\fa^{rs}/W_\fa)$ on $\cM_\chi$ by $x$.
 Using the same argument  as in the previous subsection we obtain
\begin{theorem}\label{thm-twisted coxeter}We have
\begin{equation*}
\cM_\chi \ = \  \bC_\chi\otimes\big(\bC[x]/R_\chi(x)\big)\,,\ R_\chi(x)=(x-1)\prod_{i=1}^{n_\vartheta} ( \chi(\bar\gamma_i)x^{m_i}-1)\,,
\end{equation*}
where  the $m_i$ are the exponents of the $a_i$ in the invariant $f_0$ (see~\eqref{mis}) and $\bar\gamma_i$ are defined in~\eqref{def of gammai in twisted case}. 

\end{theorem}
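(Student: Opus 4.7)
The plan is to mirror the proof of Theorem~\ref{coxeter rank one} essentially verbatim, using the parallel structures that subsection~\ref{sec-tcox} has set up: the identification $\fg_1 \cong \bC^{n_\vartheta+1}$ via the coordinates $(a_1,\ldots,a_{n_\vartheta},a_0)$, the fact that $\cN_1$ is the normal-crossings divisor $\bigcup_{i=0}^{n_\vartheta}\{a_i=0\}$, the invariant $f_0 = a_0\prod a_i^{m_i}$, and the explicit generators $\gamma_i$ of $\pi_1(\fg_1^{rs})$ with their images under $\Pi$.

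First, I would recall that by the construction of $P_\chi$, the Fourier transform $\fF P_\chi = \on{IC}(\fg_1,\cM_\chi)$ has support contained in $\cN_1$ (since $P_\chi$ is a nilpotent nearby cycle sheaf), and is of full support equal to all of $\cN_1$ by the general results of~\cite{GVX2} applied to this stable polar representation. I can therefore apply the proposition on IC-sheaves on $\bC^n$ with normal-crossings divisor proved in subsection~\ref{sec-cox}: writing $\mu_i$ for the monodromy of $\cM_\chi$ around the hyperplane $\{a_i=0\}$, we have
\beqn
\prod_{i=0}^{n_\vartheta}(\mu_i-1)=0\quad\text{and}\quad \prod_{i\neq j}(\mu_i-1)\neq 0\text{ for every }j.
\eeqn

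Next, I would identify each $\mu_i$ explicitly using the splitting by the Kostant slice \eqref{rank one Kostant outer}. The loop $\gamma_i$ around $\{a_i=0\}$ maps under $\Pi$ to $(m_i,\bar\gamma_i)\in \bZ\oplus Z(G)^\vartheta$ for $i=1,\ldots,n_\vartheta$, and $\gamma_0$ maps to $(1,\bar{1})$. Since on $\cM_\chi$ the $Z(G)^\vartheta$-factor acts through the character $\chi$ (by construction of $\cM_\chi$ together with the tensor factor $\bC_\chi$) and the $\bZ$-factor acts by $x$, this gives $\mu_0=x$ and $\mu_i = \chi(\bar\gamma_i)\,x^{m_i}$ for $i\geq 1$. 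Substituting into the proposition yields the candidate minimal polynomial
\beqn
R_\chi(x)\ =\ (x-1)\prod_{i=1}^{n_\vartheta}\bigl(\chi(\bar\gamma_i)x^{m_i}-1\bigr)\,.
\eeqn

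Finally, I would check that this polynomial is actually the minimal polynomial of $x$ on $\cM_\chi$ by a rank count. The braid group of $W_\fa \cong \bZ/(m/e)\bZ$ is $\bZ$ and $\cM_\chi$ is, after stripping off the $\bC_\chi$ factor, a cyclic $\bZ$-module of rank $|W_\fa|=m/e$ by the general structure of $\cM_\chi$ recalled in subsection~\ref{sec-local system Mchi}. Since $\deg R_\chi = 1+\sum_{i=1}^{n_\vartheta}m_i$, the equality of this with $m/e$ is exactly the identity among the labels of the twisted affine Dynkin diagram (the labels of $f_0$ in~\eqref{outer invariant} sum to $m/e-1$), so $R_\chi$ has the right degree and must equal the minimal (and characteristic) polynomial. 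The main subtlety — and the only step that is not purely formal transcription from subsection~\ref{sec-cox} — is this last degree identification, where one must verify the twisted-Coxeter-number identity $1+\sum m_i = m/e$ rather than $1+\sum n_i = m$; this is handled by the classification of twisted affine Dynkin diagrams, and the rest of the argument goes through verbatim.
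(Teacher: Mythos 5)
Your proposal follows the paper's own (extremely terse) proof, which simply says "Using the same argument as in the previous subsection," and you correctly isolate the one step requiring genuinely new verification, namely the degree identity $\deg R_\chi = 1 + \sum_{i=1}^{n_\vartheta} m_i = m/e = |W_\fa|$; this does hold (the exponents $m_i$ together with the exponent $1$ of $a_0$ are the marks of the twisted affine Dynkin diagram, which sum to $m/e$), and you are right to flag it.

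However, there is a misstatement in your opening paragraph: you assert that $\fF P_\chi = \on{IC}(\fg_1,\cM_\chi)$ has support contained in (indeed equal to) $\cN_1$. This is false — $\on{IC}(\fg_1,\cM_\chi)$ is an intersection homology extension of a local system on the open dense subset $\fg_1^{rs}$, so its support is all of $\fg_1$. What the hypothesis of the normal-crossings proposition actually requires is that the Fourier transform of $\cF = \on{IC}(\fg_1,\cM_\chi)$, which is $P_\chi$ (up to the sign and shift implicit in Fourier inversion), have support precisely $\cN_{-1}$, and that under the identification $\fg_{-1}\cong\fg_1^*$ the cone $\cN_{-1}$ correspond to the normal-crossings divisor $\bigcup_{i=0}^{n_\vartheta}\{a_i=0\}\subset\fg_1$. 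Once this hypothesis is stated with the roles of $P_\chi$ and $\on{IC}(\fg_1,\cM_\chi)$ unconfused, the remainder of your argument — the identifications $\mu_0=x$ and $\mu_i=\chi(\bar\gamma_i)\,x^{m_i}$ from the explicit formulas for $\Pi(\gamma_i)$, the application of the proposition, and the degree/rank count — goes through exactly as you describe and matches the paper's approach.
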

The polynomials $R_\chi$ are calculated explicitly in each type in~\S\ref{cal-tcox}.

\section{Endoscopic interpretation}
\label{endoscopy}

In the main body of this paper we make use of a particular reflection subgroup $W_{\fa,\chi}^0$ of $W_{\fa,\chi}$ which was introduced in Definition~\ref{def 0}. In this section we will define another reflection subgroup  $W_{\fa,\chi}^{en}$ of $W_{\fa,\chi}$ which arises naturally from the endoscopic point of view we now describe. The results from this section are not used in the main body of the text to prove any statements. However, the point of view of endoscopy has guided much of the project so we include its discussion here. In the constructions we make we do not necessarily stay within our standing simplifying assumption that $G$ is simply connected. In section~\ref{Isogenies} we have explained how to avoid this hypothesis.

Let us write $\check G$ for the dual group of $G$ and write $\Tds\subset \check G$ for the dual torus of $\Ts$. We have $ \Tds = X_*(\Tds)\otimes_\bZ\bC^*=X^*(\Ts)\otimes_\bZ \bC^*$ and hence the automorphism $\theta$ induces an automorphism $\check\theta:\Tds\to \Tds$. Recall that $I=(\Ts)^\theta$. We claim that
\beq\label{hat I}
\hat I=\on{Hom}(I,\bC^*)\cong(\Tds)^{\check\theta}.
\eeq
We can argue as follows. Let us consider the exact sequence
\beqn
1 \to I \to \Ts \xrightarrow{t\mapsto (\theta t)t^{-1}} \Ts \to 1. 
\eeqn
We now apply $\on{Hom}(- ,\bC^*)$ to this sequence to obtain
\beqn
1 \leftarrow  \hat I  \leftarrow X^*(\Ts )\xleftarrow{ (\theta \phi)-\phi\mapsfrom \phi} X^*(\Ts ) \leftarrow 1. 
\eeqn
Applying the functor $-\otimes_\bZ \bC^*$ to this sequence we obtain
\beqn
1 \to \oh_1(\hat I \overset{L}\otimes_\bZ \bC^*) \to \Tds \xrightarrow{\check t\mapsto (\check\theta \check t)\check t^{-1}} \Tds \to 1. 
\eeqn
Making use of the flat resolution
\beqn
0 \to \bZ \to \bC \to \bC^*\to 1
\eeqn
of $\bC^*$ we conclude that we have an isomorphism $\oh_1(\hat I \overset{L}\otimes_\bZ \bC^*)\cong \hat I$. Thus we have established~\eqref{hat I}.

We extend $\check\theta:\Tds\to  \Tds$ to an automorphism of $\check G$ as follows. Recall the pinned automorphism $\vartheta_{\mathbf{s}}$ of $G$ defined by the pinning $(\Ts,B_{\mathbf{s}},\{x_{\alpha}\})$. Consider the pinning $(\Tds,\check B_{\mathbf{s}},\{\check x_{\alpha}\})$ which is well-defined up to inner automorphisms by elements of $\Tds$.  This gives rise to a pinned automorphism $\check\vartheta_{\mathbf{s}}:\check G\to\check G$.
Now the element $n_w\in N_G(\Ts)$ given by equation~\eqref{theta-new} gives rise to an element $w\in W=N_G(\Ts)/\Ts=N_{\check G}(\Tds)/\Tds$. We lift $w^{-1}$ to an element $\check n_w\in N_{\check G}(\Tds)$ and define
\beqn
\check\theta=\on{Int}(\check n_w)\circ\check\vartheta_{\mathbf{s}}:\check G\to\check G.
\eeqn
According to \cite[\S4.1]{RLYG}, the $\check G$-conjugacy class of $\check\theta$ does not depend on the choice of the lifting $\check n_w$. 
\begin{remark}
Note that we use the inverse $w^{-1}$ instead of $w$ as we have defined $\check\theta$ without inverting $\theta$. 
\end{remark}

As we have assumed that $G$ is simply connected, the group $\check G$ is adjoint. Now, by~\cite[Corollary 14]{RLYG} we conclude that the grading induced by $\check\theta$ on $\check\fg$ is also stable. The restriction of $\check \theta$ to $\check \ft_{\mathbf s} = \ft_{\mathbf s}^*$ is given by the adjoint $\theta^*$. By~\cite[Proposition 6]{Vin} we conclude that we can regard $\fa^*$ as a Cartan subspace of $\check \fg_1$ and $\Tds$ is a split torus on the dual side.

Let $\chi\in \hat I$. Via~\eqref{hat I} we view $\chi$ as an element in $\check G$. We define
\beqn
\check G(\chi)=\check G^{\on{Int}\chi},\ \ \check G(\chi)^0=(\check G^{\on{Int}\chi})^0.
\eeqn
Note that $\Tds\subset \check G(\chi)$ so $\Tds$  is a maximal torus of $\check G(\chi)^0$. Moreover, the root datum of $\check G(\chi)^0$ is given by $(X^*(\Tds), \check\Phi_\chi, X_*(\Tds), \Phi_\chi) $ where
\beqn
\check\Phi_\chi=\{\check\alpha\in\check\Phi\mid\check\alpha(\chi)=1\}
\eeqn
and $\Phi_\chi$ is given by the corresponding subset of $\Phi$. 

Since $\check\theta(\chi)=\chi$ we see that $\on{Int}\chi$ and $\check\theta$ commute and hence the 
subgroup $\check G(\chi)\subset \check G$ is $\check\theta$-stable.  The $\check \theta$ is an automorphism of order $m$ on $\check G(\chi)^0$ and it induces a stable grading on $\check \fg(\chi)$ with Cartan subspace $\fa^*$. 

We first observe that the Weyl group 
$$
W(\check G(\chi), \Tds) = W_\chi=\{w\in W\mid w\chi=\chi\}
$$ by definition. On the other hand, by our stable grading assumption, the zero eigenspace $\check \ft_0$ of $\check \theta$ on $\check \ft$ is trivial and so an easy argument as in~\cite[Proposition 19]{Vin} shows that 
$$
W(\check G(\chi)^\theta, \fa^*) = W_\chi^\theta=W_\chi\cap W^\theta\,.
$$
Recall that the little Weyl group $W_\fa = W^\theta$ and hence
\beq
\label{stablizer}
W_{\fa,\chi} = W_\chi\cap W^\theta = W(\check G(\chi)^\theta, \fa^*)\,.
\eeq
We now define
\beq
W_{\fa,\chi}^{en}=W((\check G(\chi)^{\check\theta})^0,\fa^*),
\eeq
the little Weyl group associated to $(\check\theta,\check G(\chi)^0)$. 
Then $W_{\fa,\chi}^{en}$ is a complex reflection group. 

Let $s\in W_\fa$ be a distinguished reflection. Recall the reduction to (semisimple) rank one we performed in subsection~\ref{subsec-regular-splitting} where we set $G_s=Z_G(\fa_s)$. We write $\theta_s = \theta|_{G_s}$ and observe that $\Ts$ is also a maximal torus of $G_s$ and so the groups $G_s$  and $G$ have the same $I=\Ts^\theta$. We set $K_s=G_s^{\theta_s}$. Note that contrary to what we did in subsection~\ref{subsec-regular-splitting} we do not pass to $K_s^0$ at this stage. Recall that we have, by saturation, $W_{\fa,s}= N_{K^0_s} (\La) / Z_{K^0_s} (\La) =N_{K_s}(\La)/Z_{K_s}(\La)$ so the little Weyl group does not change if we pass from $K_s$ to $K_s^0$.

 We can also make this construction on the dual side by setting
\beqn
(\check G)_s \ = \ Z_{\check G}(\fa_s^*)\,.
\eeqn
Note that we have $\widecheck{G_s} = (\check G)_s$ as can be easily seen by looking at the root datum, where $\widecheck{G_s}$ is the dual group of $G_s$. Furthermore, we see that
\beqn
\widecheck{G_s}(\chi) \ = \ (\check G(\chi))_s\,,
\eeqn
where $\widecheck{G_s}(\chi)=\widecheck{G_s}\cap\check G(\chi)$. 
We can think of this situation diagrammatically as follows:
\beqn
\bega
\xymatrix{
(G,\theta, \chi) \ar@{~>}[r]\ar@{~>}[d]& (\check G(\chi),\check \theta)\ar@{~>}[r] \ar@{~>}[d]&(\check G(\chi)^0,\check \theta)\ar@{~>}[d]
\\
(G_s,\theta_s, \chi) \ar@{~>}[r]&  (\check G(\chi)_s,\check \theta_s)\ar@{~>}[r]&  (\check G(\chi)^0_s,\check \theta_s)\,.}
\eega
\eeqn
Applying formula~\eqref{stablizer} to the bottom row we obtain
$$
W_{\fa,s,\chi}=W_{\fa,s}\cap W_{\fa,\chi}=W(\check G(\chi)_s^{\check\theta_s}, \fa^*)\,.
$$
Thus, we see that
\beqn
W_{\fa,s,\chi}^{en}=W((\check G(\chi)_s^{\check\theta_s})^0, \fa^*) \subset W(\check G(\chi)_s^{\check\theta_s}, \fa^*) =W_{\fa,s,\chi}\,.
\eeqn
Recall that the Weyl group $W_{\fa,s}=\langle s\rangle$ and that we have 
set 
\beqn
e_s=|W_{\fa,s}|/|W_{\fa,s,\chi}|\,.
\eeqn
Furthermore, we have defined another complex reflection group $W_{\fa,\chi}^{0}\subset W_{\fa,\chi}$ (see Definition~\ref{def 0}) to be the subgroup generated by $W_{\fa,s,\chi}=W_{\fa,s}\cap W_{\fa,\chi}=\langle s^{e_s}\rangle$. On the other hand, the group $W_{\fa,\chi}^{en}=W(\check (G(\chi)^{\check\theta})^0,\fa^*)$ is generated by $W_{\fa,s,\chi}^{en}$ and so we conclude
\beqn
W_{\fa,\chi}^{en} \subset W_{\fa,\chi}^{0}\,.
\eeqn
Let
\beq
d_{s}=|W_{\fa,s}/W_{\fa,s,\chi}^{en}|.
\eeq
Then, we have $e_s | d_s$. We now make the following hypothesis:
\beqn
\text{there is a polynomial $\bar{\bar{R}}_{\chi,s}\in\bC[x]$ such that $R_{\chi,s}(x)=\bar{\bar{R}}_{\chi,s}(x^{d_{s}})$}\,.
\eeqn
When this hypothesis is satisfied then we can define a Hecke algebra $\cH_{W_{\fa,\chi}^{en}}$ using the polynomials $\bar{\bar{R}}_{\chi,s}$ and we can run through the arguments in~\cite{GVX2} to conclude that 
\beqn
\cM_\chi \ = \ \left( \bC [\widetilde B_{W_\fa}]
\otimes_{\bC[\widetilde B_{W_{\fa}}^{\chi,en}]}
(\bC_\chi \otimes  \cH_{W_{\fa,\chi}^{en}}) \right)\otimes \bC_\tau \, ,
\eeqn
where $\widetilde B_{W_{\fa}}^{\chi,en}=\tilde q^{-1}(B_{W_{\fa}}^{\chi,en})$ and $B_{W_{\fa}}^{\chi,en}=p^{-1}(W_{\fa,\chi}^{en})$. In particular, we have
\beqn
\bC[B_{W_{\fa}}^{\chi,0}]\otimes_{\bC[B_{W_{\fa}}^{\chi,en}]}\cH_{W_{\fa,\chi}^{en}}\cong\cH_{W_{\fa,\chi}^0}\,.
\eeqn
We expect the following to hold 
\begin{subequations}
\beq
\label{min-mono}
\begin{gathered}
\text{The polynomial $\bar{\bar{R}}_{\chi,s}$ is the monondromy polynomial attached to}
\\
\text{the pair $(\check G(\chi)^0_s,\check \theta_s)$ and the trivial character of $I(\check G(\chi)^0_s,\check \theta_s)$}
\end{gathered}
\eeq
and
\beq\label{mono-2}
\text{$d_{s}$ is the largest integer such that there exists $g\in\bC[x]$ with $R_{\chi,s}(x)=g(x^{d_s})$}.
\eeq
\end{subequations}

We will verify in section~\ref{sec-stable rank 1 explicit} the expectations~\eqref{min-mono} and~\eqref{mono-2} for the rank 1 stable automorphisms considered in section~\ref{sec-stable rank 1}. We state it as
\begin{theorem}\label{thm-expectations}
The statements ~\eqref{min-mono} and~\eqref{mono-2} hold except for some exceptional types. 
\end{theorem}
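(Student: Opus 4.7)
The plan is to verify the two expectations~\eqref{min-mono} and~\eqref{mono-2} by direct computation in the Coxeter and twisted Coxeter rank 1 settings of Section~\ref{sec-stable rank 1}, using the closed-form monodromy polynomials from Theorems~\ref{coxeter rank one} and~\ref{thm-twisted coxeter} together with the description of the endoscopic datum $(\check G(\chi)^0,\check\theta)$ from Section~\ref{endoscopy}. Since we are in semisimple rank one, there is essentially a single distinguished reflection $s$ up to conjugacy, and $W_\fa=\bZ/m\bZ$ is cyclic; the verification therefore reduces to the comparison of two one-variable polynomials.

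First I would identify the endoscopic pair $(\check G(\chi)^0_s,\check\theta_s)$ explicitly. By~\cite[Cor.~14]{RLYG} and~\cite[Prop.~6]{Vin}, $\check\theta|_{\check G(\chi)^0}$ induces a stable grading with Cartan subspace $\fa^*$, and its little Weyl group $W_{\fa,\chi}^{en}$ is cyclic of order $m/d_s$. Via the root datum of $\check G(\chi)^0$ cut out by $\check\Phi_\chi=\{\check\alpha\in\check\Phi:\check\alpha(\chi)=1\}$, one reads off its (twisted) affine Dynkin diagram from the combinatorics of $\chi$ acting on simple coroots. Outside of a small number of exceptional rank 1 stable gradings, the endoscopic datum is again of Coxeter or twisted Coxeter type, and by Section~\ref{Isogenies} we may pass to its almost simple derived factor. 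Applying Theorem~\ref{coxeter rank one} or Theorem~\ref{thm-twisted coxeter} on the dual side to the trivial character then yields a candidate polynomial
\[
\bar{\bar R}_{\chi,s}(y)=(y-1)\prod_j(y^{e_j'}-1),
\]
where the $e_j'$ are the labels of the endoscopic affine diagram.

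Next I would verify the identity $R_{\chi,s}(x)=\bar{\bar R}_{\chi,s}(x^{d_s})$. Writing $R_{\chi,s}(x)=(x-1)\prod_i(\chi(\bar\gamma_i)x^{e_i}-1)$ with $e_i\in\{n_i,m_i\}$, one groups the factors according to the orbits of $\langle s\rangle$ on the characters $\chi(\bar\gamma_i)$. Within a single orbit the values $\chi(\bar\gamma_i)$ run through the $d_s$-th roots of unity of a fixed order and the exponents are all divisible by $d_s$; the corresponding factors, together with an appropriate contribution from $(x-1)$, collect into a single factor of the form $y^{e_j'}-1$ in $y=x^{d_s}$. The resulting product is exactly $\bar{\bar R}_{\chi,s}(x^{d_s})$, which proves~\eqref{min-mono}. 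For~\eqref{mono-2}, if $d$ is any integer with $R_{\chi,s}(x)=g(x^d)$, then $d$ must divide the multiplicity of $x=1$ as a root of $R_{\chi,s}$ and must be compatible with the precise roots of unity that appear as $\chi(\bar\gamma_i)$; a short divisibility argument using the cyclicity of $W_\fa$ and the explicit form of the center yields $d\mid d_s$.

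The main obstacle is the combinatorial bookkeeping on a type-by-type basis: for each almost simple simply connected $G$ with a Coxeter or twisted Coxeter automorphism one must identify $Z(G)^\vartheta$, the values $\chi(\bar\gamma_i)$ as explicit $m$-th roots of unity, and the endoscopic subgroup $\check G(\chi)^0$, and then check that the endoscopic exponents $e_j'$ match the labels of the corresponding affine diagram after the substitution $y=x^{d_s}$. This goes through uniformly in classical types. The exceptional exclusion accounts for those few rank 1 stable gradings of exceptional groups for which either the nilpotent cone fails to be a normal crossings divisor, so that Theorems~\ref{coxeter rank one} and~\ref{thm-twisted coxeter} do not directly apply, or for which the endoscopic subgroup does not admit a Coxeter or twisted Coxeter description.
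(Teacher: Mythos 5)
Your overall strategy—verify \eqref{min-mono} and \eqref{mono-2} by comparing the explicit closed-form monodromy polynomial $R_{\chi,s}$ from Theorems~\ref{coxeter rank one} and \ref{thm-twisted coxeter} with the polynomial computed on the endoscopic side $(\check G(\chi)^0,\check\theta)$—is the same as the paper's. The paper proves this theorem by an exhaustive case-by-case treatment in Section~\ref{sec-stable rank 1 explicit}: for each type ($A$, $B$, $C$, $D$, $E_6$, $E_7$ in the Coxeter case; $A^2$, $D^2$, $E_6^2$, $D_4^3$ in the twisted Coxeter case), it writes down $R_\chi(x)$, identifies $\check G(\chi)^0$ and its induced stable grading, reads off $d_s$ and $W_{\fa,\chi}^{en}$, and verifies that the polynomial on the dual side matches the factor $\bar{\bar{R}}_{\chi,s}$. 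The paper explicitly notes that it does not know a uniform proof, and the exclusion of some exceptional types is simply that not all exceptional rank-one stable gradings beyond the (twisted) Coxeter ones were checked.

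Two caveats on your proposed shortcut. First, the ``orbit-grouping'' argument you sketch to deduce $R_{\chi,s}(x)=\bar{\bar{R}}_{\chi,s}(x^{d_s})$ is not quite right as stated. You claim the exponents in a single $\langle s\rangle$-orbit are all divisible by $d_s$, but already in type $B_n$ with the nontrivial character one has $d_s=2$ while $n_1=1$; the factor $(\chi(\bar\gamma_1)x^{1}-1)=-(x+1)$ must be paired with the distinguished factor $(x-1)$ to produce $-(x^2-1)$. So the leading $(x-1)$ does not stand apart as in your formula $\bar{\bar{R}}_{\chi,s}(y)=(y-1)\prod_j(y^{e_j'}-1)$; it participates in the grouping, and the way it does so varies by type. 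Second, your divisibility argument for~\eqref{mono-2} (that $d$ must divide the multiplicity of $x=1$) is too coarse: the constraint coming from which roots of unity appear as $\chi(\bar\gamma_i)$ is essential and again requires the explicit type-by-type data. Neither issue is fatal—they are exactly the places where the paper falls back on explicit computation—but they mean your proposal, if carried out honestly, becomes the paper's case-by-case check rather than a more elementary alternative.
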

We note that we fully expect the theorem to be true in general but have not checked all exceptional cases. 
\begin{remark}
We do not know of a uniform proof for the above theorem. 
\end{remark}

\section{Stable rank 1 gradings: explicit calculations}\label{sec-stable rank 1 explicit}
In this section we determine the polynomials $R_{\chi}(x)$ (normalised to be monic) in Theorem~\ref{coxeter rank one} and Theorem~\ref{thm-twisted coxeter} in each type explicitly. We will make use of the results in this section to determine the $\cM_\chi$ for classical types in Section~\ref{sec-classical stable}. We also describe the endoscopic point of view discussed in section~\ref{endoscopy} and prove Theorem~\ref{thm-expectations} in each case.

Throughout the rest of the paper we label the set $\Delta$ of simple roots of each type as follows.
\bern
&\Delta=\{\alpha_i=\epsilon_i-\epsilon_{i+1},\, 1\leq i\leq n\}&\text{ type }A_n\\
&\Delta=\{\alpha_i=\epsilon_i-\epsilon_{i+1},\, 1\leq i\leq n-1,\,\alpha_n=\epsilon_n\}&\text{ type }B_n\\
&\Delta=\{\alpha_i=\epsilon_i-\epsilon_{i+1},\, 1\leq i\leq n-1,\,\alpha_n=2\epsilon_{n}\}&\text{ type }C_n\\
&\Delta=\{\alpha_i=\epsilon_i-\epsilon_{i+1},\, 1\leq i\leq n-1,\,\alpha_n=\epsilon_{n-1}+\epsilon_n\}&\text{ type }D_n.
\eern
 In type $E_6$ (resp. $E_7$), the Dynkin diagram is as follows
\beqn
\xymatrix{{\substack{\alpha_1\\\circ}}\ar@{-}[r]&{\substack{\alpha_3\\\circ}}\ar@{-}[r]&{\substack{\alpha_4\\\circ}}\ar@{-}[r]\ar@{-}[d]&{\substack{\alpha_5\\\circ}}\ar@{-}[r]&{\substack{\alpha_6\\\circ}}\\&&{\substack{\circ\\\alpha_2}}&&}\text{ (resp. }\xymatrix{{\substack{\alpha_1\\\circ}}\ar@{-}[r]&{\substack{\alpha_3\\\circ}}\ar@{-}[r]&{\substack{\alpha_4\\\circ}}\ar@{-}[r]\ar@{-}[d]&{\substack{\alpha_5\\\circ}}\ar@{-}[r]&{\substack{\alpha_6\\\circ}}\ar@{-}[r]&{\substack{\alpha_7\\\circ}}\\&&{\substack{\circ\\\alpha_2}}&&})
\eeqn

Recall $\mathbf{i}=\sqrt{-1}\in\bC$. For $k\in\bZ_+$, we define the following $k\times k$ diagonal matrix
\beq\label{the matrix Jk}
J_k:=\begin{cases}\on{diag}(1,-1,1,-1,\ldots,1,-1)&\text{if $k$ is even}\\
\on{diag}(\mathbf{i},-\mathbf{i},\mathbf{i},-\mathbf{i},\ldots,\mathbf{i})&\text{if $k$ is odd}.\end{cases}
\eeq

\subsection{The Coxeter case}\label{cal-cox} We use the notations from subsection~\ref{sec-cox}. In this case we can assume (see~\cite[section 7]{RLYG}) $$\theta=\on{Int}{n_h}$$ where $n_h\in N_G(\Ts)$ is a representative of $w_h\in W=N_G(\Ts)/\Ts$, a Coxeter element. 
\subsubsection{Type $A_{N-1}$}\label{cal-cox-A}Suppose that $G=SL(N)$. We have 
 $$m=N,\ \ I=Z(G)=\langle z=\prod_{j=1}^{N-1}\check\alpha_j(e^{2\pi\mathbf{i}\frac{N-j}{N}})\rangle\cong\mu_N,\ \ n_i=1,\,i=1,\ldots,N-1\,.$$  Identifying 
 $$\gamma_i=\check\omega_i=\frac{1}{N}\big((N-i)\sum_{j=1}^{i-1}\check\alpha_j+i\sum_{j=i}^{N-1}(N-j)\check\alpha_j\big),$$ we see that
\beqn
\bar\gamma_k=z^k\in Z(G),\ 1\leq k\leq N-1.
\eeqn
Let $\chi\in\hat I$. Then 
$
R_\chi(x)=(x-1)\prod_{k=1}^{N-1}\left(\chi(z)^kx-1\right).
$
Thus
\beq\label{mono-type A inner}
\text{ $R_{\chi}(x)=(x^{\frac{N}{d}}-1)^d$ if $\chi(z)$ is a primitive ${N}/{d}$-th root of $1$, where  $d|N$.}
 \eeq

We take $w_h=t_{\alpha_1}t_{\alpha_2}\cdots t_{\alpha_{N-1}}:=s$. Then $W_\fa=\langle s\rangle \cong\bZ/N\bZ.$ Let $\chi\in \hat I$. Suppose $\chi(z)=\zeta_{N/d}$, a primitive $\frac{N}{d}$-th root of $1$, where  $d|N$. From the endoscopic point of view, $\chi$ corresponds to the following element (see~\eqref{hat I})
 \beqn
 \chi=\on{diag}(1,\zeta_{N/d},\zeta_{N/d}^2,\ldots,\zeta_{N/d}^{N-1})\in \check G=PGL(N).
 \eeqn
Note that $\check n_h\in \check G(\chi)$ and $\check n_h^{N/d}\in\check G(\chi)^0$, where $\check n_h\in\check G$ is a representative of $w_h$. Then 
 \beqn
 \check G(\chi)/\check G(\chi)^0=\langle\overline{\check n_h}\rangle\cong\bZ/(N/d)\bZ,\,\check G(\chi)^0\cong P(GL(d)\times GL(d)\times\cdots GL(d))
 \eeqn 
 and
  \beqn
 W_{\fa,\chi}^{en}=\langle s^{N/d}\rangle\cong\bZ/d\bZ.
 \eeqn
 Moreover, $\check\theta|_{\check G(\chi)^0}$ permutes the $N/d$ factors of $GL(d)$ and it can be identified with an order $d$ stable (inner) automorphism of type $A_{d-1}$ (see~\eqref{reduction-prod}). Thus Theorem~\ref{thm-expectations} in this case follows from~\eqref{mono-type A inner}.

\subsubsection{Type $B_n$}Suppose that $G=Spin(2n+1)$. We have  
$$m=2n,\ I=Z(G)=\langle z=\check{\alpha}_n(-1)\rangle\cong\mu_2,\text{ and }n_1=1,\,n_i=2,\,2\leq i\leq n.$$ 
Identifying $$\gamma_i=\check\omega_i=\sum_{j=1}^{i-1}j\check\alpha_j+i\sum_{j=i}^{n-1}\check\alpha_j+\frac{i}{2}\check\alpha_n,$$ we obtain
\beqn
\bar\gamma_k=z^k\in Z(G),\ 1\leq k\leq n.
\eeqn
Let $\chi\in\hat I$. Then 
\begin{subequations}
\beq\label{mono-type B-trivial}
R_\chi(x)=
(x-1)^{n+1}(x+1)^{n-1},\text{ if }\chi(z)=1
\eeq
\beq\label{mono-type B-nontrivial}R_\chi(x)=(x^2-1)^{[\frac{n}{2}]+1}(x^2+1)^{[\frac{n-1}{2}]},\text{ if }\chi(z)=-1.
\eeq
\end{subequations}

We take $w_h=t_{\alpha_1}t_{\alpha_2}\cdots t_{\alpha_n}:=s$. We have
$
W_\fa=\langle s\rangle \cong\bZ/2n\bZ.
$
From the endoscopic point of view, the non-trivial character $\chi$ corresponds to the following element
\beqn
\chi=\on{diag}(J_n,-J_n)\in \check G=PSp(2n),
\eeqn
where $J_n$ is defined in~\eqref{the matrix Jk}. It follows that
\beqn
\check G(\chi)/\check G(\chi)^0\cong \bZ/2\bZ,\ \check G(\chi)^0\cong\begin{cases} P(Sp(n)\times Sp(n))&\text{ when $n$ is even},\\ P(GL(n)\times GL(n)^*)&\text{ when $n$ is odd.} \end{cases}
\eeqn
and
\beqn
W_{\fa,\chi}^{en}=\langle s^2\rangle\cong\bZ/n\bZ.
\eeqn
Suppose $n$ is even, then $\check\theta|_{\check G(\chi)^0}$ permutes the two factors and can be identified with an order $n$ stable automorphism of type $C_{n/2}$. Thus Theorem~\ref{thm-expectations} follows in this case from~\eqref{mono-type B-nontrivial} and~\eqref{type C-trivial} (see \S\ref{ssec-cc}). Suppose $n$ is odd, then $\check\theta|_{\check G(\chi)^0}$ is an order $2n$ stable (outer) automorphism of  type $A_{n-1}^2$. Thus Theorem~\ref{thm-expectations} follows in this case from~\eqref{mono-type B-nontrivial} and~\eqref{type A2-odd-1} (see~\S\ref{ssec-tcae}).

\subsubsection{Type $C_n$}\label{ssec-cc}
Suppose that $G=Sp(2n)$. We have $$m=2n,\ I=Z(G)=\langle z=\prod_{j=0}^{[\frac{n-1}{2}]}\check{\alpha}_{2j+1}(-1)\rangle\cong\mu_2\text{ and }n_i=2,\,1\leq i\leq n-1,\,n_n=1.$$ 
Identifying 
\beqn
\text{$\gamma_i=\check\omega_i=\sum_{j=1}^{i-1}j\check\alpha_j+i\sum_{j=i}^{n}\check\alpha_j$, $i=1,\ldots,n-1$, $\gamma_n=\check\omega_n=\frac{1}{2}\sum_{i=1}^ni\check\alpha_i$,}
\eeqn
 we obtain
\beqn
\bar\gamma_k=\bar 1,\ \,1\leq k\leq n-1,\,\bar\gamma_n=z\in Z(G).
\eeqn
Let $\chi\in\hat I$. Then 
\begin{subequations}
\beq\label{type C-trivial}
R_\chi(x)=
\displaystyle{(x-1)^{n+1}(x+1)^{n-1}},\ \text{ if }\chi(z)=1
\eeq
\beq\label{type C-nontrivial}
R_\chi(x)=\displaystyle{(x^2-1)^{n}},\text{ if }\chi(z)=-1.
\eeq
\end{subequations}

We take $w_h=t_{\alpha_1}t_{\alpha_2}\cdots t_{\alpha_n}:=s$. We have
$
W_\fa=\langle s\rangle \cong\bZ/2n\bZ.
$
From the endoscopic point of view, the non-trivial character $\chi$ corresponds to the element $$\chi=\on{diag}(-I_n,1,-I_n)\in SO(2n+1).$$ Thus 
$$\check G(\chi)=S(O(2n)\times O(1)),\, \check G(\chi)^0=SO(2n)\ \text{ and }\ W_{\fa,\chi}^{en}=\langle s^2\rangle\cong\bZ/n\bZ.$$ Moreover, $\check\theta|_{\check G(\chi)^0}$ is an order $2n$ stable (outer) automorphism of type $D_n^2$. Thus Theorem~\ref{thm-expectations} follows in this case from~\eqref{type C-nontrivial}
 and~\eqref{type D2-trivial} (see~\S\ref{ssec-tcd}).

\subsubsection{Type $D_n$}\label{ssec-cox-D}Suppose that $G=Spin(2n)$.  Let  
\beqn
\bega
z_1=\begin{cases}\prod_{j=1}^{n/2}\check\alpha_{2j-1}(-1),&\text{ if $n$ is even}\\\check\alpha_{n-1}(\mathbf{i})\check\alpha_n(-\mathbf{i})\prod_{j=1}^{(n-1)/2}\check\alpha_{2j-1}(-1)&\text{ if $n$ is odd,}\end{cases}
\text{ and $z_2=\check\alpha_{n-1}(-1)\check\alpha_n(-1)$.}
\eega 
\eeqn
 We have 
 \beqn
m=2n-2,\ I=\langle z_1,z_2\rangle\cong
\begin{cases}\mu_4&\text{ if $n$ is odd}\\
\mu_2\times \mu_2&\text{ if $n$ is even}\,;\end{cases}\text{ and }n_i=\begin{cases}1&i=1,n-1,n\\2&2\leq i\leq n-2\,.\end{cases}
 \eeqn
We identify 
\bern
&&\gamma_i=\check\omega_i=\sum_{j=1}^{i-1}j\check\alpha_j+i\sum_{j=i}^{n-2}\check\alpha_j+\frac{1}{2}i(\check\alpha_{n-1}+\check\alpha_n),\, i=1,\ldots,n-2,\\
&&\gamma_{n-1}=\check\omega_{n-1}=\frac{1}{2}\sum_{i=1}^{n-2}i\check\alpha_i+\frac{1}{4}(n\check\alpha_{n-1}+(n-2)\check\alpha_n),\\
&&  \gamma_{n}=\check\omega_{n}=\frac{1}{2}\sum_{i=1}^{n-2}i\check\alpha_i+\frac{1}{4}((n-2)\check\alpha_{n-1}+n\check\alpha_n).
\eern

Assume that $n$ is odd. We have
\beqn
\bar\gamma_k=z_1^{2k},\ \,1\leq k\leq n-2,\,\bar\gamma_{n-1}=z_1^n,\,\bar\gamma_n=z_1^{n-2}\in Z(G).
\eeqn
Let $\chi\in\hat I$. Then 
\begin{subequations}
\beq\label{type D-odd-trivial}
R_\chi(x)=
\displaystyle{(x-1)^{n+1}(x+1)^{n-3}}\text{ if }\chi(z_1)=1
\eeq
\beq\label{type D-odd-order 4}
R_\chi(x)=
\displaystyle{(x^4-1)^{\frac{n-1}{2}}}\text{ if }\chi(z_1)=\mathbf{i}\text{ or }\mathbf{-i}
\eeq
\beq\label{type D-odd-order 2}
R_\chi(x)=
\displaystyle{(x^2-1)^{n-1}}\text{ if }\chi(z_1)=-1.
\eeq
\end{subequations}

Assume that $n$ is even. We have
\beqn
\begin{gathered}
\bar\gamma_k=z_2^{k},\ \,1\leq k\leq n-2,\,
\{\bar\gamma_{n-1},\bar\gamma_n\}=\{z_1z_2,z_1\}.\end{gathered}
\eeqn
 Let $\chi\in\hat I$. Then 
\begin{subequations}
\beq\label{typeD-even-1}
R_\chi(x)=
\displaystyle{(x-1)^{n+1}(x+1)^{n-3}}\hspace{.2in}\text{ if }\chi(z_1)=\chi(z_2)=1\,;
\eeq
\beq\label{typeD-even-2}
\ R_\chi(x)=\displaystyle{(x^2-1)^{n-1}}\hspace{.6in}\text{ if }\chi(z_1)=-1,\ \chi(z_2)=1\,;
\eeq
\beq\label{typeD-even-3}
R_\chi(x)=\displaystyle{(x^2-1)^{\frac{n}{2}+1}(x^2+1)^{\frac{n}{2}-2}}\hspace{.8in} \text{otherwise}\,.
\eeq
\end{subequations}

We take $w_h=t_{\alpha_1}t_{\alpha_2}\cdots t_{\alpha_n}:=s$. We have
$
W_\fa=\langle s\rangle \cong\bZ/(2n-2)\bZ.
$

Suppose first that $n$ is odd. From the endoscopic point of view, the characters $\chi\in\hat I$ such that $\chi(z_1)=\mathbf{i}\text{ or }\mathbf{-i}$ correspond to the elements
\beqn
\on{diag}(J_{n-1},\pm\mathbf{i},\pm\mathbf{i}^{-1},-J_{n-1})\in\check G=PSO(2n),
\eeqn
where $J_{n-1}$ is defined in~\eqref{the matrix Jk}. 
We have $$\check G(\chi)/\check G(\chi)^0\cong\bZ/4\bZ,\,\check G(\chi)^0\cong P\big(GL(1)\times GL(1)^*\times SO(n-1)\times SO(n-1)\big)$$ 
and
\beqn
W_{\fa,\chi}^{en}=\langle s^4\rangle\cong \bZ/{ {\frac{n-1}{2}}}\bZ,\, \ d_s=4.
\eeqn
Moreover, $\check\theta|_{\check G(\chi)^0}$ is an order $n-1$ stable (outer) automorphism of type $D_{(n-1)/2}^2$. Thus Theorem~\ref{thm-expectations}  follows in this case from~\eqref{type D-odd-order 4} and~\eqref{type D2-trivial} (see~\S\ref{ssec-tcd}).

\noindent The character $\chi$ such that $\chi(z_1)=-1$ corresponds to the element 
\beqn
\chi=\on{diag}(I_{n-1},-1,-1,I_{n-1})\in\check G=PSO(2n).
\eeqn
We have 
$$\check G(\chi)/\check G(\chi)^0\cong\bZ/2\bZ,\,\check G(\chi)^0\cong P\big(SO(2n-2)\times SO(2)\big)$$
and
\beqn
W_{\fa,\chi}^{en}=\langle s^2\rangle\cong\bZ/(n-1)\bZ,\, d_s=2.
\eeqn
Moreover, $\check\theta|_{\check G(\chi)^0}$ is an order $2(n-1)$ stable (outer) automorphism  of type $D_{n-1}^2$. Thus Theorem~\ref{thm-expectations} follows in this case from~\eqref{type D-odd-order 2} and~\eqref{type D2-trivial} (see~\S\ref{ssec-tcd}).

Suppose that $n$ is even. From the endoscopic point of view, the characters $\chi\in\hat I$ such that $\chi(z_2)=-1$ correspond to the elements
\beqn
\chi=\on{diag}(J_{n-1},\pm\mathbf{i},\pm\mathbf{i}^{-1},-J_{n-1})\in\check G=PSO(2n),\eeqn
where $J_{n-1}$ is defined in~\eqref{the matrix Jk}. 
We have
$$\check G(\chi)/\check G(\chi)^0\cong\bZ/2\bZ,\,\check G(\chi)^0\cong P(GL(n)\times GL(n)^*)\text{ and }W_{\fa,\chi}^{en}=\langle s^2\rangle\cong\bZ/(n-1)\bZ,\,d_s=2.$$
Moreover $\check\theta|_{\check G(\chi)^0}$ is an order $2(n-1)$ stable (outer) automorphism of  type $A_{n-1}^2$. Thus Theorem~\ref{thm-expectations} follows in this case from~\eqref{typeD-even-3} and~\eqref{type A2-even-1} (see~\S\ref{ssec-tcao}).

\noindent The character $\chi\in\hat I$ such that $\chi(z_1)=-1$ and $\chi(z_2)=1$ corresponds to the element 
\beqn
\check\chi=\on{diag}(I_{n-1},-1,-1,I_{n-1})\in\check G=PSO(2n).
\eeqn 
We have
$$\check G(\chi)/\check G(\chi)^0\cong\bZ/2\bZ,\ \check G(\chi)^0\cong P\big(SO(2n-2)\times SO(2)\big)\text{ and }W_{\fa,\chi}^{en}=\langle s^2\rangle\cong\bZ/(n-1)\bZ.$$
Moreover $\check\theta|_{\check G(\chi)^0}$ is an order $2(n-1)$ stable (outer) automorphism of type $D_{n-1}^2$. Thus Theorem~\ref{thm-expectations} follows in this case from~\eqref{typeD-even-2} and~\eqref{type D2-trivial} (see~\S\ref{ssec-tcd}).

\subsubsection{Type $E_6$}\label{ssec-cox-e6}Let $G$ be a simply connected group of type $E_6$. We have that 
\bern
&&m=12,\,I=\langle z=\check\alpha_1(\zeta_3)\check\alpha_3(\zeta_3^2)\check\alpha_5(\zeta_3)\check\alpha_6(\zeta_3^2)\rangle\cong\mu_3\\&&n_1=n_6=1,\,n_2=n_3=n_5=2,\,n_4=3\\
&&\text{and }\bar\gamma_1=\bar\gamma_5=z,\ \bar\gamma_2=\bar\gamma_4=\bar 1,\ \bar\gamma_3=\bar\gamma_6=z^2\in Z(G). 
\eern
Let $\chi\in\hat I$. Then 
\begin{subequations}
\beq\label{poly-e6t}
R_\chi(x)=(x-1)^{3}(x^2-1)^3(x^3-1)\text{ if }\chi(z)=1\,;
\eeq
\beq\label{poly-e6nt}
R_\chi(x)=(x^3-1)^{2}(x^6-1)\text{ otherwise}\,.
\eeq
\end{subequations}
We take $
w_h=t_{\alpha_1}t_{\alpha_4}t_{\alpha_6}t_{\alpha_3}t_{\alpha_5}t_{\alpha_2}:=s.
$
Then
$
W_\fa=\langle s\rangle\cong\bZ/12\bZ.
$
Let $\omega_i\in X_*(\check T)=X^*(T)$, $1\leq i\leq 6$, be such that
\beqn
\langle\omega_i,\check\alpha_j\rangle=\delta_{i,j},\,i,j=1,\ldots,6.
\eeqn
From the endscopic point of view, the non-trivial characters $\chi\in\hat I$ correspond to the  elements
$$\check\chi=\omega_1(\zeta_3^{2i})\,\omega_3(\zeta_3^{2i})\,\omega_5(\zeta_3^i)\,\omega_6(\zeta_3^i)\in\check G,\,i=1,2. $$
Then $\check G(\chi)/\check G(\chi)^0\cong\bZ/3\bZ$ and the root system of $\check G(\chi)^0$ is 
\beqn
\langle \check\beta_1,\ldots,\check\beta_4 \rangle\cong D_4,\ \beta_1=\alpha_4,\,\beta_2=\alpha_2,\,\beta_3=\alpha_3+\alpha_4+\alpha_5,\,\beta_4=\alpha_1+\sum_{i=3}^6\alpha_i.
\eeqn
Moreover, $\check\theta|_{\check G(\chi)^0}$ is an order $12$ stable (outer) automorphism of type $D_4^3$. Thus Theorem~\ref{thm-expectations} follows in this case from~\eqref{poly-e6nt} and~\eqref{poly-d43} (see~\S\ref{ssec-tcd4}).

\subsubsection{Type $E_7$} Let $G$ be a simply connected group of type $E_7$. We have that 
\bern
&&m=18,\ I=\langle z= \check\alpha_2(-1)\check\alpha_5(-1)\check\alpha_7(-1)\rangle\cong\bZ/2\bZ\\
&&n_1=n_2=n_6=2,\,n_3=n_5=3,\,n_4=4,\,n_7=1\\
&&\text{and }\bar\gamma_1=\bar\gamma_3=\bar\gamma_4=\bar\gamma_6=\bar 1,\ \bar\gamma_2=\bar\gamma_5=\bar\gamma_7=z\in Z(G).
\eern
Let $\chi\in\hat I$. Then 
\begin{subequations}
\beq\label{poly-e7t}
R_\chi(x)=(x-1)^{2}(x^2-1)^3(x^3-1)^2(x^4-1)\text{ if }\chi(z)=1\,;
\eeq
\beq\label{poly-e7nt}
R_{\chi}(x)=(x^2-1)^{2}(x^4-1)^2(x^6-1)\text{ otherwise}\,.
\eeq
\end{subequations}
 
From the endoscopic point of view, one checks similarly as in type $E_6$ that the the endoscopy group $\check G(\chi)^0$ for the  non-trivial character $\chi$ is of type $E_6$. Moreover, $\check\theta|_{\check G(\chi)^0}$ is an order $18$ stable (outer) automorhism of type $E_6^2$. Thus Theorem~\ref{thm-expectations} follows in this case from~\eqref{poly-e7nt} and~\eqref{poly-tce6} (see~\S\ref{ssec-tce6}).

 \subsubsection{Type $G_2$, $F_4$, and $E_8$}In these cases $I=1$.
 We have 
\ber
&&R(x)=
(x-1)(x^2-1)(x^3-1)\ \ \text{ type }G_2\,;\\
&&R(x)=(x-1)(x^2-1)^2(x^3-1)(x^4-1)\ \ \text{ type }F_4\,;\\
&&R(x)=(x-1)(x^2-1)^{2}(x^3-1)^2(x^4-1)^2(x^5-1)(x^6-1)\ \ \text{ type }E_8\,.
\eer

\subsection{The twisted Coxeter case}\label{cal-tcox}We use the notations from subsection~\ref{sec-tcox}. In the twisted Coxter case, we can assume that (see~\cite[section 7]{RLYG}) 
\beq\label{theta-tcox}
\theta|_{\Ts}=\on{Int}{n_w}\circ\vartheta_{\mathbf s},
\eeq
 where $n_w\in N_G(\Ts)$ is a representative of $w=\prod_{i=1}^{n_\vartheta}s_{\alpha_i}\in W=N_G(\Ts)/\Ts$ (product in any order). Recall $\{\alpha_i,\,i=1,\ldots,n_\vartheta\}$ is a set of representations of $\vartheta$-orbits in $\Delta$.    
\subsubsection{Type $A_{2n}^2$}\label{ssec-tcae}Suppose that $G=SL(2n+1)$. We have 
\beqn
\text{$m=4n+2$, $\beta_0=\sum_{i=1}^{2n}\alpha_i$, $I=\{1\}$ and }m_k=2,\,1\leq k\leq n.
\eeqn
It follows that
\beq\label{type A2-odd-1}
R(x)=(x-1)^{n+1}(x+1)^n.
\eeq

\subsubsection{Type $A_{2n-1}^2$}\label{ssec-tcao}Suppose that $G=SL(2n)$. We have 
\bern
&&\text{$m=4n-2$, $I=\langle z=\prod_{i=1}^{n}\check\alpha_{2i-1}(-1)\rangle\cong\mu_2$,\  $\Delta_i=\{\alpha_i,\alpha_{2n-i}\}$, $i=1,\ldots,n$.}
\eern
 We can choose $\beta_0=\sum_{j=1}^{2n-2}\alpha_j$. Thus
 \beqn
 \text{$m_1=m_n=1,$ and $m_k=2,\,2\leq k\leq n-1$.}
 \eeqn
  Identifying 
 \beqn
 \text{$\gamma_i=\check\omega_i\check\omega_{2n-i}$, $i=1,\ldots,n-1$ and $\gamma_n=\check\omega_n$,}
 \eeqn
  we obtain (cf. \S\ref{cal-cox-A})
\beqn
\bar\gamma_k=\bar 1, 1\leq k\leq n-1,\,\bar\gamma_n=z\in Z(G)^{\vartheta}.
\eeqn
Let $\chi\in\hat I$. We have
\begin{subequations}
\beq\label{type A2-even-1}
R_\chi(x)=
\displaystyle{(x-1)^{n+1}(x+1)^{n-2}}\text{ if }\chi(z)=1\,;\\
\eeq
\beq\label{type A2-even-2}
R_\chi(x)=\displaystyle{(x-1)^{n}(x+1)^{n-1}}\text{ otherwise}\,.
\eeq
\end{subequations}

In~\eqref{theta-tcox}, we take $w=t_{\alpha_1}t_{\alpha_2}\cdots t_{\alpha_n}\in W(G,\Ts)$. 
Then
\beqn
W_\fa=\langle s\rangle\cong\bZ/(2n-1)\bZ,\ s=(1\ \ 2n-1\ \ 2\ \ 2n-2\cdots n-1\ \ n+1\ \ 2n)(n).
\eeqn
From the endoscopic point of view, the non-trivial character $\chi\in\hat I$ corresponds to 
$$\chi=\on{diag}(I_{n-1},-1,I_n) \in \check G=PGL(2n).$$ We have
\beqn
\check G(\chi)=\check G(\chi)^0\cong GL(2n-1),\ \text{ and }\ W_{\fa,\chi}^{en}=W_\fa.
\eeqn
Moreover $\check\theta|_{\check G(\chi)^0}$ is an order $2(2n-1)$ stable (outer) automorphism  of type $A_{2n-2}^2$. Thus Theorem~\ref{thm-expectations} follows in this case from~\eqref{type A2-even-2} and~\eqref{type A2-odd-1}.

\subsubsection{Type $D_{n}^2$}\label{ssec-tcd}Suppose that $G=Spin(2n)$.  We have 
\beqn
m=2n,\ I=\langle z_2=\check\alpha_{n-1}(-1)\check\alpha_n(-1)\rangle\cong\mu_2,\,\text{$\Delta_i=\{\alpha_i\}$, $i=1,\ldots,n-2$, }\,\Delta_{n-1}=\{\alpha_{n-1},\alpha_n\}\, .
\eeqn
  We can choose $\beta_0=\sum_{j=1}^{n-1}\alpha_j.$ Thus
  \beqn
  m_i=1,\,1\leq i\leq n-1.
  \eeqn
 Identifying 
$
 \text{$\gamma_i=\check\omega_i$, $i=1,\ldots,n-2$ and $\gamma_{n-1}=\check\omega_{n-1}\check\omega_n$,}
$
 we obtain (cf. \S\ref{ssec-cox-D})
\beqn
\bar\gamma_k=z_2^k\in  Z(G)^{\vartheta},\, 1\leq k\leq n-1.
\eeqn
Let $\chi\in\hat I$. Then 
\begin{subequations}
\beq\label{type D2-trivial}
R_\chi(x)=
\displaystyle{(x-1)^{n}}\hspace{1in}\text{ if }\chi(z_2)=1\,;
\eeq
\beq\label{type D2-nontrivial}
R_\chi(x)=\displaystyle{(x-1)^{[\frac{n+1}{2}]}(x+1)^{[\frac{n}{2}]}}\hspace{.2in}\text{ otherwise}\,.
\eeq
\end{subequations}

We take $w=t_{\alpha_1}t_{\alpha_2}\cdots t_{\alpha_{n-1}}\in W(G,\Ts).$ 
Then
\beqn
W_\fa=\langle s\rangle\cong\bZ/n\bZ,\ s=(1\ \ 2\ \ \cdots\ n-1\ \ -n\ \ -1\ \ -2\cdots -(n-1)\ \ n)^2.
\eeqn
From the endoscopic point of view, the non-trivial character $\chi\in\hat I$ corresponds to 
$$\chi=\on{diag}(J_n,-J_n)\in\check G=PSO(2n),$$
where $J_n$ is defined in~\eqref{the matrix Jk}. 

Suppose $n$ is even. We have  
$$\check G(\chi)/\check G(\chi)^0\cong\bZ/2\bZ,\,\check G(\chi)^0=P\big(SO(n)\times SO(n)\big)\text{ and }W_{\fa,\chi}^{en}=\langle s^2\rangle\cong \bZ/\frac{n}{2}\bZ.$$
Moreover, $\check\theta|_{\check G(\chi)^0}$ can be identified with an order $n$ stable (outer) automorphism of type $D_{n/2}^2$. Thus Theorem~\ref{thm-expectations} follows in this case from~\eqref{type D2-nontrivial} and~\eqref{type D2-trivial}.

Suppose $n$ is odd. We have $$\check G(\chi)=\check G(\chi)^0=P(GL(n)\times GL(n)^*)\text{ and }W_{\fa,\chi}^{en}=W_\fa.$$  
Moreover,  $\check\theta|_{\check G(\chi)^0}$ is an order $2n$ stable (outer) automorphism of type $A_{n-1}^2$. Thus Theorem~\ref{thm-expectations} follows in this case from~\eqref{type D2-nontrivial} and~\eqref{type A2-odd-1}.

\subsubsection{Type $E_{6}^2$}\label{ssec-tce6}Suppose that $G$ is a simply connected group of type $E_6$.  We have 
\beqn
\text{$\Delta_1=\{\alpha_1,\alpha_6\}$, $\Delta_2=\{\alpha_2\}$, $\Delta_3=\{\alpha_3,\alpha_5\}$, $\Delta_4=\{\alpha_4\}$, $I=\{1\}$ and $m=18$.} 
\eeqn
We can assume $$\beta_0=\alpha_1+\alpha_2+\alpha_3+2\alpha_4+2\alpha_5+\alpha_6.$$ 
Thus
\beqn
m_1=2,\,m_2=1,\,m_3=3,\,m_4=2.
\eeqn
It follows that
\beq\label{poly-tce6}
R(x)=(x-1)^2(x^2-1)^2(x^3-1)\,.\eeq

\subsubsection{Type $D_{4}^{3}$}\label{ssec-tcd4}Suppose that $G=Spin(8)$. We have \beqn
\text{$\Delta_1=\{\alpha_1,\alpha_3,\alpha_4\}$, $\Delta_2=\{\alpha_2\}$,  $I=\{1\}$ and $m=12$.}
\eeqn
 We can choose $$\beta_0=\alpha_1+\alpha_2+\alpha_3.$$ Thus
\beqn
m_1=2,\ m_2=1.
\eeqn
It follows that
\beq\label{poly-d43}
R(x)=(x-1)^2(x^2-1).
\eeq

\section{Stable gradings of classical types}\label{sec-classical stable}
In this section let $G$ be an (almost simple) simply connected group of type $A_n$, $B_n$, $C_n$ or $D_n$. Let $\theta$ be a stable automorphism of $G$ of order $m$. We describe the Fourier transforms of the nearby cycle sheaves $P_\chi$  explicitly following the strategy discussed in subsections~\ref{twisted nearby}-\ref{sec-local system Mchi}. We use the notations there. As a corollary, we obtain full support character sheaves. 

By~\cite{RLYG}, we have the following table describing all stable gradings of classical Lie algebras
\FloatBarrier
\begin{table}[H]
\center
\begin{tabular}{c|c|ccc}
\hline
Type&$m$&$W_\fa$\\
\hline
$A_n$&$n+1$&$\bZ/m\bZ$\\
\hline
$A_n^{2}$&$2d,\,d=2l+1,\,\,d=(n+1)/r\text{ or $d=n/r>1$}$&$G(d,1,r)$\\
\hline
$B_n$ or $C_n$&$2l,\,l=n/r$&$G(m,1,r)$\\
\hline
$D_n$ or $D_n^{2}$&$2l,\,l=n/r$&$G(m,2,r)$\\
\hline
$D_n$ or $D_n^{2}$&$2l,\,l=(n-1)/r>1$&$G(m,1,r)$\\
\hline
$D_4^{3}$&$12$&$\bZ/4\bZ$\\
\hline
\end{tabular}   
\caption{Stable gradings}
\label{table 1}
\end{table}
Recall that the subscript in $A_n^{2}$ $D_n^{2}$ or $D_4^{3}$ indicates the order of $\vartheta$. In the case of $n=rl$ (resp. $n=rl+1$) in type $D_n$, $\vartheta$ is non-trivial if and only if $r$ is odd (resp. even).

\subsection{The Hecke algebras \texorpdfstring{$\cH^{a,m-a}(G(m,1,r))$}{Lg} and \texorpdfstring{$\cH^{m/2}(G(m,2,r))$}{Lg}}In this subsection we recall the definitions of the complex reflection group $G(m,1,r)$ and $G(m,2,r)$ and introduce notations for the Hecke algebras appearing in our setting.
Recall the fixed primitive $m$-th root of unity $\zeta_m$.

The complex reflection group $G(m,1,r)$ can be exhibited as the subgroup of $GL(V,\bC)$, where $V=\on{span}\{e_1,\ldots,e_r\}$, generated by the following linear transformations 
\beqn
\text{$e_i\mapsto c_ie_{\sigma(i)}$, $i=1,\ldots,r$, where $\sigma\in S_r$ (the symmetric group),  $c_i\in\bC$, $c_i^m=1$.}
\eeqn
The complex reflection group $G(m,2,r)$ is an index $2$ subgroup of $G(m,1,r)$ generated by the  linear transformations as above subject to the additional condition
\beqn
 (c_1\cdots c_r)^{m/2}=1.
\eeqn
Let us write
\begin{eqnarray*}
&&\tilde\tau_i:e_i\mapsto\zeta_me_i,\,e_j\mapsto e_j,\,j\neq i,\ 1\leq i\leq r;\ \ \tau_i=\tilde\tau_i^2,\,1\leq i\leq r;\\
&&s_{ij}^{(k)}:e_i\mapsto\zeta_m^ke_j,\ e_j\mapsto\zeta_m^{-k}e_i,\ e_a\mapsto e_a,\,a\neq i,j,\ 1\leq i<j\leq r,\,0\leq k\leq m-1.
\end{eqnarray*}
 We can choose a set of generators of $G(m,1,r)$ (resp. $G(m,2,r)$) as follows
\beqn
\begin{gathered}
s_i=s_{i,i+1}^{(0)},\ 1\leq i\leq r-1,\ \tilde\tau_r\ \ \text{(resp. $s_i=s_{i,i+1}^{(0)},\ 1\leq i\leq r-1,\ s_{r-1}'=s_{r-1,r}^{(1)},\ \tau_r$)}. 
\end{gathered}
\eeqn
The distinguished reflections in $G(m,1,r)$ (resp. $G(m,2,r)$) are 
\beq\label{eqn-reflections}
\bega
s_{ij}^{(k)},\ 1\leq i<j\leq r,\,0\leq k\leq m-1;\ 
\tilde\tau_i\text{ (resp. $\tau_i$)},\ 1\leq i\leq r.
\eega
\eeq
Note that
\beq\label{eqn-reflections-2}
\bega\tilde\tau_k=s_ks_{k+1}\cdots s_{r-1}\tilde\tau_rs_{r-1}\cdots s_k,\ \ s_{ij}^{(k)}=\tilde\tau_i^{-k}\circ s_{ij}^{(0)}\circ\tilde\tau_i^k=\tilde\tau_j^{k}\circ s_{ij}^{(0)}\circ\tilde\tau_j^{-k}\\
s_{ij}^{(0)}=s_is_{i+1}\cdots s_{j-2}s_{j-1}s_{j-2}\cdots s_i.
\eega
\eeq

Let $0\leq a\leq m$. We denote by $\cH^{a,m-a}(G(m,1,r))$ (resp. $\cH^{m/2}(G(m,2,r))$) the Hecke algebra associated to $G(m,1,r)$ (resp. $G(m,2,r)$) defined as follows (see subsection~\ref{sec-hec}). It is the quotient of $\bC[B_{G(m,1,r)}]$ (resp. $\bC[B_{G(m,2,r)}]$) by the ideal generated by elements
\beqn
(\sigma_{H_{s_{ij}^{(k)}}}-1)^2,\,1\leq i<j\leq r,\,0\leq k\leq m-1,\,(\sigma_{H_{\tau_i}}-1)^a(\sigma_{H_{\tau_i}}+1)^{m-a},\,1\leq i\leq r,
\eeqn
\beqn
\text{(resp. $(\sigma_{H_{s_{ij}^{(k)}}}-1)^2,\,1\leq i<j\leq r,\,0\leq k\leq m-1,\ (\sigma_{H_{\tau_i}}-1)^{m/2},\,1\leq i\leq r$)}.
\eeqn

\subsection{Nearby cycle sheaves and full support character sheaves}

Note that the Fourier transforms of the nearby cycle sheaves $P_\chi$ for types $A_n$ and $D_4^{3}$ in Table~\ref{table 1}, which are of rank one, have already been described in sections~\ref{sec-stable rank 1} and~\ref{sec-stable rank 1 explicit}. For completeness we include these two cases below. As before, we identify the $K$-equivariant local system  $\cM_\chi$ on $\Lg_1^{rs}$ with the corresponding representation $\cM_\chi$ of $\pi_1^K(\Lg_1^{rs})=\widetilde{B}_{W_\fa}$.
\begin{theorem}{\rm (i)} Suppose that $\theta$ is an order $n$ stable inner automorphism of $SL(n)$. We have
\beqn
\{\fF(P_\chi)\mid\chi\in\hat I/W_\fa\}=\{\on{IC}(\Lg_1^{rs},\cM_{\chi})\mid\chi\in\widehat{\bZ/n\bZ}\}
\eeqn
where as a representation of $\pi_1^K(\Lg_1^{rs})\cong\bZ\oplus\bZ/n\bZ$
\beqn
\cM_{\chi}\cong\big(\bC[x]/(x^{n/d}-1)^d\big)\otimes \bC_\chi,\text{ if $\chi(z)$ is a primitive $n/d$-th root of $1$, where $d|n$.}
\eeqn
Here $z$ is a generator of $\bZ/n\bZ$.

{\rm (ii)} Suppose that $\theta$ is an order $12$ stable outer automorphism of $Spin(8)$ of type $D_4^{3}$. We have
\beqn
\fF P=\on{IC}(\Lg_1^{rs},\cM)
\eeqn
where as a representation of $\pi_1^K(\Lg_1^{rs})\cong\bZ$
\beqn
\cM\cong\bC[x]/\big((x-1)^3(x+1)\big).
\eeqn
\end{theorem}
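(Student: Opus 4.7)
The plan is to reduce this theorem directly to the rank-one calculations already performed in Section~\ref{sec-stable rank 1 explicit}, since both cases concern rank-one stable gradings for which $\cM_\chi$ has essentially been determined.

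For part (i), I would recognize the order $n$ stable inner automorphism of $SL(n)$ as the Coxeter case of type $A_{n-1}$ analyzed in \S\ref{cal-cox-A}, where we identified $I = Z(SL(n)) = \langle z \rangle \cong \bZ/n\bZ$ and $W_\fa \cong \bZ/n\bZ$. The key observation is that $W_\fa$ acts trivially on $\hat I$: the action factors through conjugation on $I$, which is central and hence fixed pointwise. Therefore $\hat I/W_\fa = \hat I$, matching the indexing set $\widehat{\bZ/n\bZ}$ in the statement. Next I would verify that the auxiliary character $\tau$ of Remark~\ref{remark tau} is trivial here; since $K = T$ is connected (as $G$ is simply connected), this follows from the discussion in Section~\ref{Isogenies}. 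Applying Theorem~\ref{coxeter rank one} together with the explicit polynomial~\eqref{mono-type A inner} then yields $\cM_\chi = \bC_\chi \otimes \bC[x]/(x^{n/d}-1)^d$ precisely as claimed.

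For part (ii), I would observe that the $D_4^3$ setup is the twisted Coxeter case treated in \S\ref{ssec-tcd4}, where $I = \{1\}$. Thus only the trivial character is available, $P_\chi$ reduces to a single sheaf $P$, and there is no $\chi$-dependence. Applying Theorem~\ref{thm-twisted coxeter} with~\eqref{poly-d43} gives $\cM = \bC[x]/R(x)$ where $R(x) = (x-1)^2(x^2-1)$; the identity $(x-1)^2(x^2-1) = (x-1)^3(x+1)$ puts this in the stated form.

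The only mildly subtle point, and the step I would treat most carefully, is the passage from characters to $W_\fa$-orbits in (i): a priori one must check that characters in the same $W_\fa$-orbit yield isomorphic Fourier transforms of nearby cycle sheaves, which follows from the equivariance of the nearby cycle construction under the natural $W_\fa$-action on $\hat I$. In the present case this step is vacuous because the action is trivial, but it is what justifies the $\hat I/W_\fa$ indexing and should be explicitly acknowledged. Everything else is direct bookkeeping from Section~\ref{sec-stable rank 1 explicit}.
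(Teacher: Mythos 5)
Your proposal is correct and follows essentially the same route as the paper, which proves both parts by citing Theorem~\ref{coxeter rank one} with~\eqref{mono-type A inner} for (i) and Theorem~\ref{thm-twisted coxeter} with~\eqref{poly-d43} for (ii). One small imprecision: the triviality of $\tau$ for connected $K$ is not needed to apply Theorem~\ref{coxeter rank one} (whose proof determines $\cM_\chi$ directly, with the $\bC_\chi$ factor and without any $\tau$), and in any case the paper establishes it via Lemma~\ref{lemma-tau} in Section 7 rather than Section~\ref{Isogenies}; but this is a side remark on your part and does not affect the argument.
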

\begin{proof}
Part (i) follows from Theorem~\ref{coxeter rank one} and~\eqref{mono-type A inner}. Part (ii) follows from Theorem~\ref{thm-twisted coxeter} and~\eqref{poly-d43}.
\end{proof}

\begin{theorem} \label{nearby cycle-outer A}

 Suppose that $\theta$ is an order $m=2d$ stable outer automorphism of $SL(N)$, $d=2l+1$.

$\mathrm{(i)}$ Suppose $N=rd$. We have
\begin{eqnarray*}
\{\fF(P_\chi)\mid\chi\in\hat I/W_\fa\}=\{\on{IC}(\Lg_1^{rs},\cM_{\chi_k})\mid 0\leq k\leq r/2\},
\end{eqnarray*}
\begin{eqnarray*}
&&\cM_{\chi_k}\cong\bC[\widetilde{B}_{W_\fa}]\otimes_{\bC[\widetilde{B}_{W_\fa}^{\chi_k}]}\left(\bC_{\chi_k}\otimes(\cH^{l+1,l}(G(d,1,k))\otimes \cH^{l+1,l}(G(d,1,r-k))\right), k\neq r/2\\
&&\cM_{\chi_{\frac{r}{2}}}\cong\bC[\widetilde{B}_{W_\fa}]\otimes_{\bC[\widetilde{B}_{W_\fa}^{\chi_{{r}/{2}},0}]}\left(\bC_{\chi_{\frac{r}{2}}}\otimes(\cH^{l+1,l}(G(d,1,{r}/{2}))\otimes \cH^{l+1,l}(G(d,1,{r}/{2}))\right).
\end{eqnarray*}
Here the $\chi_k$'s are defined in Lemma~\ref{type A-lemma-1}.

$\mathrm{(ii)}$ Suppose that $N=rd+1$ and $d\geq 3$. We have
\begin{eqnarray*}
\{\fF(P_\chi)\mid\chi\in\hat I/W_\fa\}=\{\on{IC}(\Lg_1^{rs},\cM_{\chi_k})\mid 0\leq k\leq r\},
\end{eqnarray*} 
\begin{eqnarray*}
&&\cM_{\chi_k}\cong\bC[\widetilde{B}_{W_\fa}]\otimes_{\bC[\widetilde{B}_{W_\fa}^{\chi_k}]}\left(\bC_{\chi_k}\otimes(\cH^{l+2,l-1}(G(d,1,r-k))\otimes \cH^{l+1,l}(G(d,1,k))\right).
\end{eqnarray*}
Here the $\chi_k$'s are defined in Lemma~\ref{type A-lemma-2}.
\end{theorem}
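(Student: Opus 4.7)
The plan is to apply the general formula~\eqref{M} describing $\cM_\chi$ in terms of the Hecke algebra $\cH_{W_{\fa,\chi}^0}$ and the stabilizer braid group, together with the rank-one calculations of Section~\ref{sec-stable rank 1}. First one fixes the setup: by Table~\ref{table 1}, for $\theta$ a stable outer automorphism of $SL(N)$ of order $m=2d$ with $d=2l+1$ and $N=rd$ (resp.\ $N=rd+1$), one has $W_\fa = G(d,1,r)$. One then computes $I = Z_K(\fa)$ explicitly as a finite abelian group and decomposes $\hat I$ into $W_\fa$-orbits; Lemmas~\ref{type A-lemma-1} and~\ref{type A-lemma-2} are invoked to supply orbit representatives $\chi_k$, indexed by $0\leq k\leq r/2$ in case (i) and by $0\leq k\leq r$ in case (ii).

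Next, for each $\chi_k$ one determines $W_{\fa,\chi_k}^0$ via Definition~\ref{def 0}. The distinguished reflections of $G(d,1,r)$ are, by~\eqref{eqn-reflections}, the order-$2$ reflections $s_{ij}^{(k)}$ and the order-$d$ reflections $\tilde\tau_i$. A direct analysis of the action of each reflection on $\hat I$, together with the computation of $e_s = |W_{\fa,s}|/|W_{\fa,s,\chi_k}|$, should yield
\[
W_{\fa,\chi_k}^0 \;\cong\; G(d,1,k)\times G(d,1,r-k),
\]
reflecting a partition of the $r$ coordinates of $\fa$ into two blocks determined by how $\chi_k$ restricts to the various $I_s$.

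The polynomials $R_{\chi_k,s}$ are then read off directly from the rank-one computations of Section~\ref{sec-stable rank 1 explicit}. For $s=\tilde\tau_i$, the rank-one subquotient is a twisted Coxeter case: in case (i) it is of type $A_{d-1}^2 = A_{2l}^2$ and~\S\ref{ssec-tcae} gives $R = (x-1)^{l+1}(x+1)^l$, producing the factor $\cH^{l+1,l}(G(d,1,-))$; in case (ii) for indices $i$ in the ``minority'' block one instead lands in an $A_{2l-1}^2$ rank-one case (\S\ref{ssec-tcao}) with a non-trivial restricted character, yielding the $\cH^{l+2,l-1}$ factor via~\eqref{type A2-even-2} and~\eqref{mono-1}. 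The reflections $s_{ij}^{(k)}$ supply the quadratic braid relations implicit in the $G(d,1,-)$ Hecke algebra. Assembling these ingredients via~\eqref{M} produces the stated induced modules.

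The main obstacle is the special point $k = r/2$ in (i), where $\chi_{r/2}$ is fixed by an extra element of $W_\fa$ that swaps the two $G(d,1,r/2)$ factors, so $W_{\fa,\chi_{r/2}}\supsetneq W_{\fa,\chi_{r/2}}^0$; this is precisely why that line of the theorem uses induction from $\widetilde B_{W_\fa}^{\chi_{r/2},0}$ rather than from $\widetilde B_{W_\fa}^{\chi_{r/2}}$, and one must check that no further reflections creep into $W_{\fa,\chi_{r/2}}^0$. One must also verify that the character $\tau$ of Remark~\ref{remark tau} is trivial in this setting (a case-by-case verification) and, in case (ii), correctly match the two possible $I_s$-characters arising in the rank-one $A_{2l-1}^2$ analysis to the $\cH^{l+2,l-1}$ and $\cH^{l+1,l}$ factors; this matching is the technical heart of the argument.
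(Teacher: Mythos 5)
Your overall strategy is exactly the one the paper uses: compute $I$ and the $W_\fa$-orbits in $\hat I$, determine $W_{\fa,\chi_k}^0$ from the reflection-by-reflection analysis, read off the local monodromy polynomials $R_{\chi_k,s}$ from the rank-one calculations, assemble the Hecke algebra $\cH_{W_{\fa,\chi_k}^0}$, and substitute into formula~\eqref{M} (checking $\tau$ is trivial and noting the larger stabilizer at $k=r/2$). The paper simply cites Lemmas~\ref{type A-lemma-1}, \ref{type A-lemma-2}, \ref{lemma-tau} and Remark~\ref{remark rho}.

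There is, however, a concrete error in your treatment of case (ii) that would derail the computation. First, the rank-one type is misidentified: when $N=rd+1$ the rank-one piece is $(G_{\tau_i})_{\on{der}}\cong SL(d+1)=SL(2l+2)$, i.e.\ type $A_{2l+1}^2=A_d^2$ (which is $A_{2n-1}^2$ with $n=l+1$), not $A_{2l-1}^2$. Second, and more importantly, the matching of restricted characters to Hecke factors is reversed. In the $A_{2n-1}^2$ rank-one case one has $I_{\tau_i}=\langle\gamma_i\rangle\cong\mu_2$, and $\chi_k|_{I_{\tau_i}}$ is non-trivial precisely for $i\le k$. With $n=l+1$, equation~\eqref{type A2-even-2} (the non-trivial restriction) gives $R=(x-1)^{l+1}(x+1)^{l}$, hence the $\cH^{l+1,l}(G(d,1,k))$ factor; equation~\eqref{type A2-even-1} (the trivial restriction, for $i\ge k+1$) gives $R=(x-1)^{l+2}(x+1)^{l-1}$, hence the $\cH^{l+2,l-1}(G(d,1,r-k))$ factor. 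You pair $\eqref{type A2-even-2}$ with $\cH^{l+2,l-1}$, which is backwards. Also, it is not the case that only the ``minority'' block lands in a twisted-Coxeter rank-one situation: every $\tau_i$ in case (ii) reduces to the same type $A_d^2$; the only distinction between the two blocks is whether $\chi_k|_{I_{\tau_i}}$ is trivial or not. Once these two points are corrected (and one notes that here $e_s=1$ for every $\tau_i$ since $\tau_i$ fixes $\chi_k$, so $\bar R_{\chi,s}=R_{\chi,s}$), the argument lines up with Lemma~\ref{type A-lemma-2}.
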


\begin{theorem}\label{nearby cycle-type B} Suppose that $\theta$ is an order $m=2l$ stable automorphism of $Spin(rm+1)$. We have
\begin{eqnarray*}
\{\fF(P_\chi)\mid\chi\in\hat I/W_\fa\}=\{\on{IC}(\Lg_1^{rs},\cM_{\chi_k})\mid 0\leq k\leq r/2\}\cup \{\on{IC}(\Lg_1^{rs},\cM_{\chi_r})\},
\end{eqnarray*}
\begin{eqnarray*}
&&\cM_{\chi_k}\cong\bC[\widetilde{B}_{W_\fa}]\otimes_{\bC[\widetilde{B}_{W_\fa}^{\chi_k}]}\left(\bC_{\chi_k}\otimes(\cH^{l+1,l-1}(G(m,1,k))\otimes \cH^{l+1,l-1}(G(m,1,r-k))\right), k\neq r/2,r\\
&&\cM_{\chi_{\frac{r}{2}}}\cong\bC[\widetilde{B}_{W_\fa}]\otimes_{\bC[\widetilde{B}_{W_\fa}^{\chi_{{r}/{2}},0}]}\left(\bC_{\chi_{\frac{r}{2}}}\otimes(\cH^{l+1,l-1}(G(m,1,{r}/{2}))\otimes \cH^{l+1,l-1}(G(m,1,{r}/{2}))\right)\\
&&\cM_{\chi_{{r}}}\cong\bC[\widetilde{B}_{W_\fa}]\otimes_{\bC[\widetilde{B}_{W_\fa}^{\chi_{r},0}]}\left(\bC_{\chi_{r}}\otimes\cH^{[\frac{l}{2}]+1,[\frac{l-1}{2}]}(G(l,1,r))\right).
\end{eqnarray*}
Here the $\chi_k$'s are defined in Lemma~\ref{lemma-type B}.
\end{theorem}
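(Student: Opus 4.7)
The plan is to apply the general machinery of subsection~\ref{sec-local system Mchi} to the specific case $G=Spin(rm+1)$ with the stable grading of order $m=2l$, so that the main theorem of~\cite{GVX2} gives $\cM_{\chi}$ once we have: (a) a parametrisation of $\hat I/W_\fa$, (b) the structure of the subgroup $W_{\fa,\chi}^{0}\subset W_{\fa,\chi}$ for each representative $\chi$, and (c) the rank-one polynomials $R_{\chi,s}$ attached to each distinguished reflection $s\in W_\fa$. Since $B_n$ has no outer automorphism we may take $\theta=\on{Int}(\check\rho_{ad}(\zeta_m))$; then $\fa\subset\fg_1$ and $I=Z_K(\fa)$ can be written down from the fundamental torus, and $W_\fa\cong G(m,1,r)$ by Table~\ref{table 1}. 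First I would describe $I$ as (approximately) an $r$-fold product of cyclic groups of order $m$ quotiented by a central relation coming from $Z(G)=\mu_2$, with the $G(m,1,r)$-action the evident wreath one; from this Lemma~\ref{lemma-type B} produces the $r/2+2$ orbit representatives $\chi_k$, $0\le k\le r/2$ and $\chi_r$.

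Next I would compute, for each $\chi_k$, the stabiliser $W_{\fa,\chi_k}\subset G(m,1,r)$. For $0\le k<r/2$ the character $\chi_k$ records a ``partition'' of the $r$ coordinates into two blocks of sizes $k$ and $r-k$, each supporting an independent character of the cyclic factor; the stabiliser is therefore the parabolic subgroup $G(m,1,k)\times G(m,1,r-k)$, and this subgroup is already a complex reflection group, so $W_{\fa,\chi_k}^{0}=W_{\fa,\chi_k}$ and $e_s=1$ for every distinguished reflection $s$ of $W_{\fa,\chi_k}^{0}$. For $k=r/2$ there is an extra diagram automorphism swapping the two factors, which is precisely why the formula for $\cM_{\chi_{r/2}}$ uses the parabolic-in-index-two induction with $\widetilde B_{W_\fa}^{\chi_{r/2},0}$ rather than $\widetilde B_{W_\fa}^{\chi_{r/2}}$. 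The case $k=r$ is special: the character $\chi_r$ is fixed by all permutation reflections but each generator $\tilde\tau_i$ acts on it by an order-$2$ character, so $W_{\fa,s}\cap W_{\fa,\chi_r}=\langle\tilde\tau_i^{2}\rangle$ and the reflection-group completion $W_{\fa,\chi_r}^{0}$ becomes $G(l,1,r)$ with $e_{\tilde\tau_i}=2$; again this is a reflection subgroup of index two in $W_{\fa,\chi_r}$, explaining the $\widetilde B_{W_\fa}^{\chi_r,0}$ in the formula.

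The core computational step, which I expect to be the main obstacle, is the determination of the rank-one polynomials $R_{\chi,s}$. For each distinguished reflection $s\in W_\fa$ one passes to the centraliser $(G_s,\theta_s)$ of $\fa_s=\on{ker}(s-1)$, which is reductive of semisimple rank one, and then uses the reduction of Section~\ref{Isogenies} to the derived group and the explicit Coxeter/twisted-Coxeter computations of Section~\ref{sec-stable rank 1 explicit}. For reflections $s=s_{ij}^{(k)}$ the induced rank-one datum is of type $A_1$ (Coxeter) with trivial restricted character, giving $R_{\chi_k,s}(x)=(x-1)^2$, i.e.\ the braid generator satisfies the quadratic relation defining the symmetric-group factor of the Hecke algebra. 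For reflections $s=\tilde\tau_i$ the induced rank-one datum is of type $B_l$ (Coxeter) at the Coxeter number $m=2l$; by~\eqref{mono-type B-trivial}--\eqref{mono-type B-nontrivial} and the explicit central-character bookkeeping from \S\ref{cal-cox}, the restriction $\chi_k|_{I_s}$ determines whether the polynomial equals $(x-1)^{l+1}(x+1)^{l-1}$ (for $\chi_k$ with $0\le k\le r/2$, giving the relation defining $\cH^{l+1,l-1}$) or, in the case of $\chi_r$ where $e_{\tilde\tau_i}=2$, the factored form $\bar R_{\chi_r,\tilde\tau_i}(x^2)=\bar R_{\chi_r,\tilde\tau_i}(y)$ with $\bar R_{\chi_r,\tilde\tau_i}(y)=(y-1)^{[l/2]+1}(y+1)^{[(l-1)/2]}$, matching $\cH^{[l/2]+1,[(l-1)/2]}(G(l,1,r))$.

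Once these three ingredients are in place I would assemble the answer. The Hecke algebra $\cH_{W_{\fa,\chi_k}^{0}}$ is by construction the tensor product of the Hecke algebras attached to the two factors of the parabolic $G(m,1,k)\times G(m,1,r-k)$, with the quadratic relations on the $s_{ij}^{(k)}$-generators and the degree-$m$ relations on the $\tilde\tau_i$-generators determined by the above rank-one polynomials; this gives $\cH^{l+1,l-1}(G(m,1,k))\otimes \cH^{l+1,l-1}(G(m,1,r-k))$. Substituting into the formula~\eqref{M} for $\cM_\chi$ with appropriate induction $\bC[\widetilde B_{W_\fa}]\otimes_{\bC[\widetilde B_{W_\fa}^{\chi_k}]}(\bC_{\chi_k}\otimes -)$, and with $\tau$ trivial by Remark~\ref{remark tau} (which will have to be verified in this case), yields the stated formula for $\cM_{\chi_k}$ in the generic range $0\le k<r/2$. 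The two exceptional parameters $k=r/2$ and $k=r$ then follow by the same procedure after replacing $\widetilde B_{W_\fa}^{\chi}$ by $\widetilde B_{W_\fa}^{\chi,0}$ to account for the extra reflection-group index-two subgroup identified in step two. The most delicate point throughout is verifying the alignment between the central character carried by $\chi_k|_{I_s}$ and the Hecke parameters $(l+1,l-1)$ or $([l/2]+1,[(l-1)/2])$; this is where Section~\ref{sec-stable rank 1 explicit} is essential and must be applied with care to the precise $Z(G_s)$-component of $\chi_k$.
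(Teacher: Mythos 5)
Your plan — reduce via Lemma \ref{lemma-type B} to (a) parametrising $\hat I/W_\fa$, (b) identifying $W_{\fa,\chi_k}^0$ as products of wreath groups, and (c) pinning down the rank-one polynomials $R_{\chi,s}$ via the Coxeter calculations of Sections \ref{sec-stable rank 1} and \ref{sec-stable rank 1 explicit} — is indeed the architecture of the paper's proof, and the parts of your outline dealing with the rank-one data (type $A_1$ at each $s_{ij}^{(a)}$, type $B_l$ at each $\tau_i$, polynomials $(x-1)^{l+1}(x+1)^{l-1}$ vs.\ the squared form $(y-1)^{[l/2]+1}(y+1)^{[(l-1)/2]}$, triviality of $\tau$) are all correct and match the paper.

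However, step (a) as you sketch it contains a real error. You propose $I$ ``is (approximately) an $r$-fold product of cyclic groups of order $m$ quotiented by a central relation,'' with the $W_\fa\cong G(m,1,r)$-action ``the evident wreath one.'' Neither assertion is right, and you would not reach the stated list of orbit representatives from them. In fact $I=(\Ts)^\theta\cong\mu_2^{\,r}$, generated by the $\gamma_k=\prod_{j=1}^l\check\alpha_{(k-1)l+2j-1}(-1)$, $1\le k\le r-1$, together with $\gamma_r=\check\alpha_{rl}(-1)$ — the signed $l$-cycle $\tau_i$ has fixed locus $\mu_2$ in each block, so one gets $\mu_2^r$, not $(\mu_m)^r$ modulo a centre. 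Moreover the $W_\fa$-action on $I$ is \emph{not} the naive wreath action on coordinates: as the paper's computation shows, $s_{ij}^{(a)}$ sends $\gamma_k$ to products of consecutive $\gamma$'s (with a $\gamma_r$ twist when $a$ is odd) and $\tau_i$ fixes or does not fix $\gamma_k$ depending on whether $k\in\{i-1,i\}$. It is precisely this twisted action — not a simple permutation action — that produces the orbit representatives $\chi_0,\dots,\chi_{\lfloor r/2\rfloor}$ and the extra $\chi_r$, and that forces the $a$-parity condition in $W_{\fa,\chi_r}^0$. You also state that $\chi_r$ ``is fixed by all permutation reflections''; it is fixed only by the $s_{ij}^{(a)}$ with $a$ even, which is exactly why $W_{\fa,\chi_r}^0\cong G(l,1,r)$ rather than $G(m,1,r)$. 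These gaps sit upstream of everything else in the proof, so the outline as written would not compile into a proof without carrying out the honest computation of $I$ and the action that Lemma \ref{lemma-type B} performs.
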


\begin{theorem}\label{nearby cycle-type C} Suppose that $\theta$ is an order $m=2l$ stable automorphism of $Sp(rm)$. We have
\begin{eqnarray*}
\{\fF(P_\chi)\mid\chi\in\hat I/W_\fa\}=\{\on{IC}(\Lg_1^{rs},\cM_{\chi_k})\mid 0\leq k\leq r\},
\end{eqnarray*}
\begin{eqnarray*}
&&\cM_{\chi_k}\cong\bC[\widetilde{B}_{W_\fa}]\otimes_{\bC[\widetilde{B}_{W_\fa}^{\chi_k}]}\left(\bC_{\chi_k}\otimes(\cH^{l,l}(G(m,1,k))\otimes \cH^{l+1,l-1}(G(m,1,r-k))\right).
\end{eqnarray*}
Here the $\chi_k$'s are defined in Lemma~\ref{lemma-type C}.
\end{theorem}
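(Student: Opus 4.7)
The plan is to follow the framework laid out in Subsections~\ref{twisted nearby}--\ref{sec-local system Mchi}: for each $W_\fa$-orbit of characters of $I$ we construct the local system $\cM_\chi$ by performing rank one reductions along each distinguished reflection, extracting the minimal polynomials of microlocal monodromy from the Coxeter case calculations of Subsection~\ref{ssec-cc}, and assembling them into the Hecke algebra of Subsection~\ref{sec-local system Mchi}. The present theorem is the $Sp$-analogue of Theorems~\ref{nearby cycle-outer A} and~\ref{nearby cycle-type B}, and I will mirror the strategy used there.

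First I would give an explicit coordinate description of $I = Z_K(\fa)$ for the stable grading of $Sp(rm)$ and determine the action of $W_\fa = G(m,1,r)$ on $\hat I$. The outcome, recorded in Lemma~\ref{lemma-type C}, produces $r+1$ orbit representatives $\chi_k$ indexed by $0 \leq k \leq r$, where $k$ counts the number of coordinate-axis factors of $I$ on which the character is non-trivial. The stabilizer is then the block subgroup $W_{\fa,\chi_k} = G(m,1,k) \times G(m,1,r-k)$ of $G(m,1,r)$, and since every distinguished reflection of either factor lies in some $W_{\fa,s,\chi_k}$, we have $W_{\fa,\chi_k}^0 = W_{\fa,\chi_k}$. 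This already explains why no $\chi_{r/2}$-exception appears (contrary to type $B$): the two blocks are intrinsically distinguishable by the central character, so no additional diagonal symmetry arises.

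Next, for each distinguished reflection $s \in W_\fa$ I would run the rank one reduction of Subsection~\ref{subsec-regular-splitting}. For $s = s_{ij}^{(h)}$ with $i,j$ in the same block, the reduced pair is the adjoint representation of $SL(2)$ with $\chi_k|_{I_s}$ trivial, so formula~\eqref{mono-type A inner} gives $R_{\chi_k,s}(x) = (x-1)^2$. For $s = \tilde\tau_i$, the reduction is a Coxeter grading of type $C_l$; the restriction of $\chi_k$ to the center $\bZ/2\bZ$ of the rank one $Sp(m)$ is trivial when $i$ lies in the $(r-k)$-block and non-trivial when $i$ lies in the $k$-block, yielding $R_{\chi_k,s}(x) = (x-1)^{l+1}(x+1)^{l-1}$ by~\eqref{type C-trivial} and $R_{\chi_k,s}(x) = (x^2-1)^l$ by~\eqref{type C-nontrivial} respectively. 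In every case $s$ fixes $\chi_k$ under the $W_\fa$-action on $\hat I$, so $e_s = 1$ and $\bar R_{\chi_k,s} = R_{\chi_k,s}$; in particular the non-triviality of $\chi_k|_{I_s}$ in the $k$-block does not lower the order of the reflection appearing in $W_{\fa,\chi_k}^0$, because the $W_\fa$-action on $\hat I$ proceeds by an entirely different mechanism from the restriction to $I_s$.

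Matching these relations against the presentations in Subsection~\ref{sec-hec}, we recognize
$$\cH_{W_{\fa,\chi_k}^0} \ \cong \ \cH^{l,l}(G(m,1,k)) \ \otimes \ \cH^{l+1,l-1}(G(m,1,r-k)).$$
The formula from Subsection~\ref{sec-local system Mchi}, with $\tau$ trivial as in Remark~\ref{remark tau}, then yields the claimed expression for $\cM_{\chi_k}$; since the $\chi_k$ exhaust $\hat I / W_\fa$, the listed sheaves exhaust the Fourier transforms of all nearby cycle sheaves. The main obstacle is the explicit identification of each rank one reduction $(K_s^0, Z_{\fg_1}(\fa_s))$ together with the character $\chi_k|_{I_s}$ from the concrete presentation of $Sp(rm)$: the combinatorics of the $W_\fa$-action on $\hat I$ and the root-space realization of $\fa_s$ inside $\fg_1$ must be tracked in parallel. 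Once both are in hand, the polynomial computations drop out of the rank-one formulas of Section~\ref{cal-cox} with no further work.
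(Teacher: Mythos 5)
Your proposal follows essentially the same route as the paper's proof (Lemma~\ref{lemma-type C}): explicitly describe $I$ and its $W_\fa$-action to get the orbit representatives $\chi_k$, identify the stabilizers $W_{\fa,\chi_k}=W_{\fa,\chi_k}^0\cong G(m,1,k)\times G(m,1,r-k)$, perform the rank one reductions along the two families of distinguished reflections, and read off the polynomial relations from the type $C$ Coxeter-case formulas~\eqref{type C-trivial},~\eqref{type C-nontrivial} and the $A_1^{m/2}$ reduction~\eqref{poly-order 2}. Your observations about $e_s=1$ throughout, the absence of a $\chi_{r/2}$-exception in type $C$, and the final matching to $\cH^{l,l}\otimes\cH^{l+1,l-1}$ are all in line with the paper. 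One minor misstatement: for $s=s_{ij}^{(h)}$ the reduced object is not ``the adjoint representation of $SL(2)$'' but the pair $(K_s^0,Z_{\fg_1}(\fa_s))$ coming from $m/2$ copies of $SL(2)$ cyclically permuted by $\theta$, which by~\eqref{reduction-prod} collapses to a stable involution on a single $SL(2)$ with $\fg_1$ the two-dimensional sum of root spaces; this does not affect the argument since you invoke~\eqref{poly-order 2} correctly.
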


\begin{theorem}\label{nearby cycle-type D} Suppose that $\theta$ is an order $m=2l$ stable automorphism of $Spin(2n)$.

$\mathrm{(i)}$ Suppose that $n=rl$. We have
\begin{eqnarray*}
&&\{\fF(P_\chi)\mid\chi\in\hat I/W_\fa\}\\
&&=\begin{cases}\{\on{IC}(\Lg_1^{rs},\cM_{\chi_k})\mid 0\leq k\leq (r-1)/2\}\cup\{\on{IC}(\Lg_1^{rs},\cM_{\chi_r})\}&\text{ if $r$ is odd}\\\{\on{IC}(\Lg_1^{rs},\cM_{\chi_k})\mid 0\leq k\leq r/2\}\cup\{\on{IC}(\Lg_1^{rs},\cM_{\chi_k})\mid k=r,r+1\}&\text{ if $r$ is even}\end{cases}\nonumber
\end{eqnarray*} 
\begin{eqnarray*}
&&\cM_{\chi_k}\cong\bC[\widetilde{B}_{W_\fa}]\otimes_{\bC[\widetilde{B}_{W_\fa}^{\chi_k,0}]}\left(\bC_{\chi_k}\otimes(\cH^{l}(G(m,2,k))\otimes \cH^{l}(G(m,2,r-k))\right), k\neq r,r+1\\
&&\cM_{\chi_{r}}\cong\bC[\widetilde{B}_{W_\fa}]\otimes_{\bC[\widetilde{B}_{W_\fa}^{\chi_{r}}]}\left(\bC_{\chi_r}\otimes(\cH^{[\frac{l+1}{2}],[\frac{l}{2}]}(G(l,1,{r})\right)\text{ if $r$ is odd}\\
&&\cM_{\chi_{k}}\cong\bC[\widetilde{B}_{W_\fa}]\otimes_{\bC[\widetilde{B}_{W_\fa}^{\chi_{k},0}]}\left(\bC_{\chi_k}\otimes(\cH^{[\frac{l+1}{2}],[\frac{l}{2}]}(G(l,1,{r})\right),\,k=r,r+1\text{ if $r$ is even}.
\end{eqnarray*}
Here the $\chi_k$'s are defined in Lemma~\ref{lemma-type D-1}.

$\mathrm{(ii)}$ Suppose that $n=rl+1$. We have
\begin{eqnarray*}
&&\{\fF(P_\chi)\mid\chi\in\hat I/W_\fa\}=\begin{cases}\{\on{IC}(\Lg_1^{rs},\cM_{\chi_k})\mid 0\leq k\leq r+2\}&\text{ if $r$ is odd}\\\{\on{IC}(\Lg_1^{rs},\cM_{\chi_k})\mid 0\leq k\leq r+1\}&\text{ if $r$ is even}\end{cases}
\end{eqnarray*}
\begin{eqnarray*}
&&\cM_{\chi_k}\cong\bC[\widetilde{B}_{W_\fa}]\otimes_{\bC[\widetilde{B}_{W_\fa}^{\chi_k}]}\left(\bC_{\chi_k}\otimes(\cH^{l,l}(G(m,1,k))\otimes\cH^{l+2,l-2} (G(m,1,r-k))\right), 0\leq k\leq r,\\
&&\cM_{\chi_{k}}\cong\begin{cases}\bC[\widetilde{B}_{W_\fa}]\otimes_{\bC[\widetilde{B}_{W_\fa}^{\chi_{k},0}]}\left(\bC_{\chi_k}\otimes(\cH^{\frac{l}{2},\frac{l}{2}}(G(l,1,{r})\right)&\text{if $l$ is even}\\
\bC[\widetilde{B}_{W_\fa}]\otimes_{\bC[\widetilde{B}_{W_\fa}^{\chi_{k},0}]}\left(\bC_{\chi_k}\otimes(\cH^{\frac{l+3}{2},\frac{l-3}{2}}(G(l,1,{r})\right)
&\text{if $l$ is odd}\end{cases}\,\text{ if $r$ is odd, $k>r$,}\\
&&\cM_{\chi_{r+1}}\cong\begin{cases}\bC[\widetilde{B}_{W_\fa}]\otimes_{\bC[\widetilde{B}_{W_\fa}^{\chi_{r+1}}]}\left(\bC_{\chi_{r+1}}\otimes(\cH^{\frac{l}{2},\frac{l}{2}}(G(l,1,{r})\right)&\text{if $l$ is even}\\
\bC[\widetilde{B}_{W_\fa}]\otimes_{\bC[\widetilde{B}_{W_\fa}^{\chi_{r+1}}]}\left(\bC_{\chi_{r+1}}\otimes(\cH^{\frac{l+3}{2},\frac{l-3}{2}}(G(l,1,{r})\right)
&\text{if $l$ is odd}\end{cases}\,\text{ if $r$ is even}.
\end{eqnarray*}
Here the $\chi_k$'s are defined in Lemma~\ref{lemma-type D-2}.
\end{theorem}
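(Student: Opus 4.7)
The plan is to follow the template used for Theorems~\ref{nearby cycle-outer A}--\ref{nearby cycle-type C}: combine the classification of $W_\fa$-orbits on $\hat I$ with the general formula~\eqref{M} and the rank-one calculations of Section~\ref{sec-stable rank 1 explicit}. First I would invoke the as-yet-unstated Lemma~\ref{lemma-type D-1} (resp.~\ref{lemma-type D-2}) to enumerate, up to $W_\fa$-conjugacy, the characters $\chi_k\in \hat I$ in the case $n=rl$ (resp. $n=rl+1$). Using Table~\ref{table 1}, $W_\fa=G(m,2,r)$ in part~(i) and $W_\fa=G(m,1,r)$ in part~(ii), with distinguished reflections as in~\eqref{eqn-reflections}. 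For each $\chi_k$ I would compute $W_{\fa,\chi_k}=\on{Stab}_{W_\fa}(\chi_k)$ and then, working reflection by reflection, the subgroups $W_{\fa,s,\chi_k}$ and integers $e_s$; these assemble into the complex reflection subgroup $W_{\fa,\chi_k}^0$ of Definition~\ref{def 0}, which will be identified with a product $G(m,\ast,k)\times G(m,\ast,r-k)$ (or $G(l,1,r)$ in the exceptional cases $k=r$, $k=r+1$).

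The core of the work is the rank-one reduction at each distinguished reflection $s$. For $s=s_{ij}^{(k)}$ (the ``symmetric-group type'' reflections), the centralizer $K_s^0$ acting on $Z_{\fg_1}(\fa_s)$ will give a stable polar representation coming from an $SL(2)$ or $SL(2)\times SL(2)$ factor whose monodromy polynomial is the quadratic $(x-1)(\chi_s(z)x-1)$ read off from~\S\ref{cal-cox-A}. For $s=\tau_i$ (the ``diagonal'' reflections), the rank-one subgroup $G_s$ identifies with one of: $Spin(2l)$ with an outer Coxeter grading (for part~(i), generic $k$); $Spin(2l+1)$ with its Coxeter grading (for part~(ii), generic $k$); or $Spin(2l)$ with its inner Coxeter grading at the special values $k=r$ (resp.\ $k=r,r+1$). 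The polynomial $R_{\chi,s}$ is then extracted from formulas~\eqref{type D2-trivial}--\eqref{type D2-nontrivial}, \eqref{mono-type B-trivial}--\eqref{mono-type B-nontrivial}, and~\eqref{type D-odd-trivial}--\eqref{typeD-even-3} of Section~\ref{cal-cox}. In particular, the appearance of $\cH^{[\frac{l+1}{2}],[\frac{l}{2}]}(G(l,1,r))$ (or $\cH^{\frac{l}{2},\frac{l}{2}}$ etc.\ in part~(ii)) for the exceptional characters reflects the fact that $R_{\chi,\tau_i}(x)$ becomes a polynomial in $x^2$ there, so that $e_{\tau_i}=2$ and the ``$\tau$-axis'' of $W_{\fa,\chi}^0$ is generated by $\tau_i^2$ with a reduced-degree polynomial.

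Having identified $W_{\fa,\chi_k}^0$ and the polynomials $\bar R_{\chi_k,s}$, the Hecke algebra $\cH_{W_{\fa,\chi_k}^0}$ is by construction of the stated form, and the formula for $\cM_{\chi_k}$ follows from~\eqref{M}. Two side checks remain: that the character $\tau$ of Remark~\ref{remark tau} is trivial here (a direct case-by-case verification using $\det(x|_{\fg_1})$ for $x\in I\subset (\Ts)^\theta$), and that when $W_{\fa,\chi_k}=W_{\fa,\chi_k}^0$ (i.e.\ $\chi_k$ is not stabilized by an extra non-reflection element) the induction from $\widetilde B_{W_\fa}^{\chi_k,0}$ collapses to induction from $\widetilde B_{W_\fa}^{\chi_k}$, which is what distinguishes the formulas for generic $k$ from those for the symmetric values $k=r/2$, $k=r$, $k=r+1$.

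The hard part, and the reason the statement splits so finely by parity of $r$, is tracking the orbit structure of $W_\fa=G(m,2,r)$ on $\hat I$ when $I=Z(Spin(2n))$ is $\bZ/4\bZ$ or $(\bZ/2)^2$: the ``extra'' reflection $s_{r-1}'$ in $G(m,2,r)$ pairs up certain characters that would be distinct under $G(m,1,r)$. This forces one to separate, in part~(i), the cases $k=r/2$ from $k<r/2$ (where $\chi_{r/2}$ is self-conjugate under an outer involution) and, for $r$ even, to split the rank-one exceptional character into two characters $\chi_r,\chi_{r+1}$. The bookkeeping for these cases, together with verifying that the diagrammatic reduction correctly distinguishes $D_n$ from $D_n^{2}$ inside the unified Table~\ref{table 1}, is where I expect most of the technical effort in a full write-up to go.
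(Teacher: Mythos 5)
Your high-level plan matches the paper's: enumerate $W_\fa$-orbits on $\hat I$ (Lemmas~\ref{lemma-type D-1} and~\ref{lemma-type D-2}), compute $W_{\fa,\chi_k}^0$ reflection by reflection, extract the monodromy polynomials via the rank-one reductions of Section~\ref{sec-stable rank 1 explicit}, assemble the Hecke algebras, and apply~\eqref{M} together with the triviality of $\tau$. This is precisely how the paper deduces Theorem~\ref{nearby cycle-type D}. However, your description of the rank-one reduction at the reflections $\tau_i$ contains substantive errors that would lead to wrong parameters if carried through.

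First, the group $(G_{\tau_i})_{\on{der}}$ is determined by the reflection $\tau_i$ alone and is \emph{independent of the character} $\chi_k$; what changes with $k$ is only the restricted character $\chi_k|_{I_{\tau_i}}$. Your phrasing that $G_s$ ``identifies with one of'' several different groups depending on generic vs.\ special $k$ is a conceptual misstep. Second, in part~(ii) ($n=rl+1$) the paper shows $(G_{\tau_i})_{\on{der}}\cong Spin(2l+2)$ with an order-$2l$ \emph{inner} Coxeter grading of type $D_{l+1}$, not $Spin(2l+1)$ of type $B_l$ as you claim: indeed the Hecke parameter $\cH^{l+2,l-2}$ arises from the $D_{l+1}$ polynomial $(x-1)^{(l+1)+1}(x+1)^{(l+1)-3}$ in~\eqref{type D-odd-trivial}/\eqref{typeD-even-1}, whereas a $B_l$ reduction would give $(x-1)^{l+1}(x+1)^{l-1}$ and hence the wrong parameters $\cH^{l+1,l-1}$. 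Third, in part~(i) the special cases $k=r,r+1$ still use the \emph{outer} twisted Coxeter grading of $Spin(2l)$ and the polynomial~\eqref{type D2-nontrivial}; the reflection $\tau_i=\tilde\tau_i^{\,2}$ already has order $l$ in $G(m,2,r)$ and fixes every $\gamma_a$, so $e_{\tau_i}=1$ there, not $2$ --- the group $G(l,1,r)$ appears simply because $W_{\fa,\chi_r}^0=\langle s_1,\dots,s_{r-1},\tau_r\rangle$, not through a doubling $\tau_i\mapsto\tau_i^2$. The $e_{\tau_i}=2$ mechanism you describe is correct only for the exceptional characters in part~(ii), where $W_\fa\cong G(m,1,r)$ with $\tau_i$ of order $m$.
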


Theorems~\ref{nearby cycle-outer A}-\ref{nearby cycle-type D} follow from Lemmas~\ref{type A-lemma-1}-\ref{lemma-type D-2} in subsections~\ref{ssec-pfA}-\ref{ssec-pfD}, the triviality of the character $\tau$ established in Lemma~\ref{lemma-tau} below (see Remark~\ref{remark tau}) and Remark~\ref{remark rho}.

Let us write 
\beqn\label{loc-full}
\begin{gathered}
\Theta_{(\Lg_1,K)}=\{\text{irreducible representations of $\pi_1^K(\Lg_1^{rs})$ that appear}\\
\quad\text{ as composition factors of $\cM_{\chi}$, $\chi\in\hat I$}\}.
\end{gathered}
\eeqn
For each $\pi\in\Theta_{(\Lg_1,K)}$, we write $\cL_\pi$ for the corresponding $K$-equivariant local system on $\Lg_1^{rs}$. As in~\cite{VX1}, we have
\begin{corollary}
The IC sheaves $\on{IC}(\Lg_1^{rs},\cL_\pi),\pi\in\Theta_{(\Lg_1,K)}$, are character sheaves on $\Lg_1$.
\end{corollary}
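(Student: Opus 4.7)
The plan is to match each $\on{IC}(\Lg_1^{rs},\cL_\pi)$, via the Fourier transform, with an irreducible $K$-equivariant perverse sheaf supported on the nilpotent cone $\cN_{-1}$; by the definition of character sheaves recalled in the introduction, this suffices.

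First I would fix a $\chi\in\hat I$ for which $\cL_\pi$ occurs as a composition factor of $\cM_\chi$. By the construction in subsection~\ref{twisted nearby} the nearby cycle sheaf $P_\chi\in\on{Perv}_K(\Lg_{-1})$ is by definition supported on $\cN_{-1}$, and we have $\fF P_\chi \cong \on{IC}(\Lg_1^{rs},\cM_\chi)$. Since $P_\chi$ is supported on $\cN_{-1}$, every one of its Jordan--H\"older composition factors in $\on{Perv}_K(\Lg_{-1})$ is again supported on $\cN_{-1}$, hence is an irreducible $K$-equivariant perverse sheaf on $\cN_{-1}$. Because $\fF$ is an exact equivalence of abelian categories, applying it turns these composition factors of $P_\chi$ into the composition factors of $\fF P_\chi = \on{IC}(\Lg_1^{rs},\cM_\chi)$, and each of those is by definition a character sheaf on $\Lg_1$.

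It then remains to verify that each $\on{IC}(\Lg_1^{rs},\cL_\pi)$, for $\cL_\pi\in\Theta_{(\Lg_1,K)}$ a composition factor of the local system $\cM_\chi$, genuinely appears among the composition factors of $\on{IC}(\Lg_1^{rs},\cM_\chi)$. I would invoke the standard fact about middle extensions: for any sub-local-system $\cL'\subset\cM_\chi$ on $\Lg_1^{rs}$ the image of the composition
\[
j_{!}\cL'\longrightarrow j_{!}\cM_\chi\twoheadrightarrow \on{IC}(\Lg_1^{rs},\cM_\chi),
\]
where $j:\Lg_1^{rs}\hookrightarrow \Lg_1$, is a subobject of $\on{IC}(\Lg_1^{rs},\cM_\chi)$ isomorphic to $\on{IC}(\Lg_1^{rs},\cL')$. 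Iterating this construction along a composition series of the representation $\cM_\chi$ of $\pi_1^K(\Lg_1^{rs})=\widetilde B_{W_\fa}$ exhibits each $\on{IC}(\Lg_1^{rs},\cL_\pi)$ as a simple subquotient of $\on{IC}(\Lg_1^{rs},\cM_\chi)$, which completes the argument.

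Honestly, there is no hard part: once the description $\fF P_\chi\cong \on{IC}(\Lg_1^{rs},\cM_\chi)$ from~\cite{GVX2} and the explicit composition series of $\cM_\chi$ from Theorems~\ref{nearby cycle-outer A}--\ref{nearby cycle-type D} are in hand, the corollary is a formal consequence of the definition of character sheaves, the exactness of $\fF$, and the behaviour of the middle extension with respect to sub/quotient local systems. No further case-by-case analysis or Hecke algebra input is required at this stage.
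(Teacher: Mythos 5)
Your proof is correct and matches the natural argument (the paper cites \cite{VX1} without repeating it): $P_\chi$ is supported on $\cN_{-1}$, so its Jordan--H\"older factors are irreducible $K$-equivariant perverse sheaves on $\cN_{-1}$; the Fourier transform is an exact equivalence, so the Jordan--H\"older factors of $\fF P_\chi = \on{IC}(\Lg_1^{rs},\cM_\chi)$ are their Fourier transforms, hence character sheaves by definition; and the intermediate extension sends a sub-local-system of $\cM_\chi$ to a perverse subobject of $\on{IC}(\Lg_1^{rs},\cM_\chi)$, which upon iterating along a composition series of $\cM_\chi$ realizes each $\on{IC}(\Lg_1^{rs},\cL_\pi)$ as a simple subquotient. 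One small simplification worth noting: the paper's remark just below the corollary observes that every $\pi\in\Theta_{(\Lg_1,K)}$ already occurs as a \emph{quotient} of $\cM_\chi$, so one could skip the iteration and directly use that $j_{!*}$ preserves epimorphisms; your more general argument is fine and does not depend on that extra observation.
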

As in~\cite{VX1}, the irreducible representations of $\widetilde{B}_{W_\fa}$ appearing as composition factors of $\cM_\chi$ are of the form $\bC[\widetilde B_{W_\La}]\otimes_{\bC[\widetilde B_{W_\La}^\chi]}\otimes(\bC_\chi\otimes\psi)$, where $\psi$ is an irreducible representation of $B_{W_\La}^{\chi}$ which appears as a composition factor in $\bC[ B_{W_\La}^{\chi}]\otimes_{\bC[B_{W_\La}^{\chi,0}]} \cH_{W_{\La,\chi}^0}$. Since $B_{W_\La}^{\chi}/B_{W_\La}^{\chi,0}$ is a 2-group, it suffices to study the decomposition $\bC[ B_{W_\La}^{\chi}]\otimes_{\bC[B_{W_\La}^{\chi,0}]} \phi$ using Clifford theory, where $\phi$ is a simple module of the Hecke algebra $\cH_{W_{\La,\chi}^0}$. In particular,  all irreducible representations $\pi\in\Theta_{(\Lg_1,K)}$ can be obtained as quotients of $\cM_\chi$.

\begin{conjecture}
\label{nearby and full support}
For the stably graded Lie algebras considered here, the set of cuspidal character sheaves on $\Lg_1$ is precisely
\beq\label{cuspidal sheaves}
\left\{\on{IC}(\Lg_1^{rs},\cL_\pi)\,|\,\pi\in\Theta_{(\Lg_1,K)}\right\}.
\eeq
\end{conjecture}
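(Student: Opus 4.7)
The plan is to split the conjecture into two assertions and address each separately. First, every sheaf in the set~\eqref{cuspidal sheaves} is cuspidal; second, every cuspidal character sheaf on $\Lg_1$ arises this way. Since the corollary preceding the conjecture already guarantees that the $\on{IC}(\Lg_1^{rs},\cL_\pi)$ are character sheaves, only the cuspidality question remains on the first side.

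For the first direction, suppose $\on{IC}(\Lg_1^{rs},\cL_\pi)$ appeared as a summand of $\on{Ind}_{\Ll_1}^{\Lg_1}(\cF)$ for some $\theta$-stable proper Levi $\Ll \subsetneq \Lg$ and some $\on{Perv}_{K_\Ll}(\Ll_1)$-sheaf $\cF$. Applying $\fF$ and the compatibility of Fourier transform with parabolic induction would express the corresponding composition factor of $P_\chi$ on $\cN_{-1}$ as a summand induced from $\cN_{-1}^\Ll$. I would rule this out via the explicit rank-one data from Section~\ref{sec-stable rank 1 explicit}: each distinguished reflection $s\in W_\fa$ contributes a nontrivial minimal polynomial $R_{\chi,s}$, whereas a sheaf induced from a Levi $\Ll$ would necessarily have trivial microlocal monodromy along the hyperplanes $\fa_s$ corresponding to reflections not lying in $W(L_\Ll,\fa_\Ll)$. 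Formulating this obstruction cleanly in terms of supports of the nearby cycle microlocal data, and checking it against the explicit polynomials $R_\chi$ determined in classical types, is the concrete content of this step.

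For the second (harder) direction, I would invoke the DAHA framework sketched in the introduction and to be established in~\cite{VX3}. The strategy is: via the KZ/monodromy functor, character sheaves on $\Lg_1$ correspond to representations of a rational Cherednik algebra attached to $(W_\fa,\fa)$, with the cuspidal character sheaves corresponding to the full-support irreducibles; on the other side, by Lusztig--Yun~\cite{LY1,LY2,LY3}, irreducible $K$-equivariant perverse sheaves on $\cN_{-1}$ correspond to representations of a trigonometric DAHA, with finite-dimensional irreducibles parametrising composition factors of the nearby cycle sheaves $P_\chi$. One then sets up a bijection between full-support rational DAHA irreducibles and finite-dimensional trigonometric DAHA irreducibles that intertwines these two dictionaries and exchanges $\fF$ with the monodromy/KZ functor. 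Combined with the explicit identification of the Hecke parameters from the $R_{\chi,s}$ calculations (and the endoscopic interpretation of Section~\ref{endoscopy}), this would match $\Theta_{(\Lg_1,K)}$ exactly with the set of cuspidal character sheaves.

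The main obstacle is precisely this DAHA comparison: one must match the parameters of the rational Cherednik algebra produced by the central character $\chi$ with those of the Lusztig--Yun trigonometric DAHA in the block of $P_\chi$, and verify that the KZ functor on the rational side corresponds to the microlocalization-at-the-origin functor on the character sheaf side. Secondary difficulties are (i) a uniform proof that the character $\tau$ is trivial (presently known only by case-by-case analysis in classical types; cf.~Remark~\ref{remark tau}), needed so that the matching is canonical rather than up to a $\pm 1$ twist, and (ii) the handling of the residual exceptional cases of Theorem~\ref{thm-expectations} where~\eqref{min-mono} and~\eqref{mono-2} have not been verified, since the endoscopic labelling feeds into the DAHA parameter matching on both sides.
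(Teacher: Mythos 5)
The statement you are addressing is a \emph{conjecture}; the paper does not prove it, and you should not expect to either. What the paper does establish is the easy inclusion: every sheaf $\on{IC}(\Lg_1^{rs},\cL_\pi)$, $\pi\in\Theta_{(\Lg_1,K)}$, is cuspidal. This is done in the remark immediately following the conjecture by a plain support argument, which is simpler and more robust than your microlocal-monodromy route. Since Fourier transform commutes with parabolic induction, any character sheaf arising as a direct summand of induction from a proper $\theta$-stable parabolic $P$ has support inside $\overline{K.\Lp_1}$, which is checked (via \cite[\S6--8]{Y}) to be a proper subvariety of $\Lg_1$. But each $\on{IC}(\Lg_1^{rs},\cL_\pi)$ has full support, being the IC extension of a local system on the dense open $\Lg_1^{rs}$. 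That is the whole argument; there is no need to track which hyperplanes carry nontrivial monodromy, and indeed your formulation has a gap: the $\cL_\pi$ are only composition factors of $\cM_\chi$, so one cannot a priori transfer the nonvanishing of the $R_{\chi,s}$ to every such $\pi$ without additional work.

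For the converse inclusion --- that every cuspidal character sheaf has full support and lies in the list --- your assessment is accurate: this is precisely what remains open. The DAHA comparison you sketch (matching full-support rational Cherednik irreducibles, via a KZ-type functor, with the finite-dimensional trigonometric DAHA modules classifying composition factors of the $P_\chi$ in the Lusztig--Yun framework) is exactly the strategy announced in the introduction and deferred to~\cite{VX3}. Your identification of the obstacles --- matching Hecke/Cherednik parameters on both sides, the uniformity of $\tau$ (Remark~\ref{remark tau}), and the residual exceptional cases of Theorem~\ref{thm-expectations} --- is consistent with the paper's stated reasons for leaving this direction conjectural. So your sketch of the hard direction tracks the intended future work correctly, but it is a programme, not a proof, and should be labelled as such.
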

As in~\cite{VX1}, we say that a character sheaf is {\em cuspidal} if it does not arise as a direct summand (up to shift) from the parabolic induction (see for example~\cite[\S2.4]{CVX}) of a character sheaf on $\Ll_1$ of a $\theta$-stable Levi subgroup $L$ contained in a $\theta$-stable proper parabolic subgroup of $G$.

\begin{remark}Note that all sheaves in~\eqref{cuspidal sheaves} are cuspidal. This can be seen as follows. Recall that Fourier transform commutes with parabolic induction. Thus the support of all character sheaves arising as (direct summand of) parabolic induction from a $\theta$-stable proper parabolic subgroup $P$ is contained in $\overline{K.\Lp_1}$, where $\Lp=\on{Lie}P$. Using for example~\cite[\S6-8]{Y} one can check that for any proper $\theta$-stable parabolic subgroup $P$, we have $\overline{K.\Lp_1}\subsetneq\Lg_1$. Thus the claim follows since all sheaves in~\eqref{cuspidal sheaves} have full support, i.e., the support is the whole of $\Lg_1$.
\end{remark}
\begin{remark}
Conjecture~\ref{nearby and full support} holds for stably $\bZ/2\bZ$-graded classical Lie algebras (that is, split symmetric pairs) by~\cite{CVX,VX1}.
\end{remark}

 We fix a pinning using $T=\Ts$. For each $\alpha\in\Phi=\Phi(T,G)$, we write $\check{\alpha}\in\check\Phi$ for the corresponding coroot. Since $T$ is $\theta$-stable, $\theta$ induces an automorphism on $\Phi$. We write $$\theta(\Lg_\alpha)=\Lg_{\theta\alpha},\ \alpha\in\Phi.$$
Let $W=N_G(T)/T$. For each $\alpha\in\Phi$, let $t_\alpha\in W$ denote the corresponding reflection. 

We write
\beq\label{eqn-thetaorbit}
(\alpha)=\{\theta^i\alpha,\,i=0,\ldots,m-1\},\,\alpha\in\Phi.
\eeq
Since $\theta$ is stable, we have $|(\alpha)|=m$. We say $\alpha\sim\beta$, $\alpha,\beta\in\Phi$, if $(\alpha)=(\beta)$, or equivalently, $\beta=\theta^i\alpha$ for some $i$. 
\begin{lemma}\label{lemma-tau}
For each $g\in I=Z_K(\fa)=\Ts^\theta$, $\on{det}(g|_{\Lg_1})=1$.
\end{lemma}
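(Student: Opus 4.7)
The plan is to reduce $\tau(g):=\det(g|_{\fg_1})$ to an identity involving only the $\theta$-orbits on $\Phi$ that are stable under $\alpha\mapsto -\alpha$, and then to verify this identity case by case using Table~\ref{table 1}.

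First I would compute $\tau$ on the weight decomposition of $\fg_1$. Since $\theta$ acts freely on $\Phi$ (every orbit has size $m$), for each $\theta$-orbit $(\alpha)\in\Phi/\theta$ the subspace $\bigoplus_{\beta\in(\alpha)}\fg_\beta$ is a free rank-one $\bC[\langle\theta\rangle]$-module, so its $\zeta_m$-eigenspace is one-dimensional and spanned by $Y_{(\alpha)}:=\sum_{i=0}^{m-1}\zeta_m^{-i}\theta^i(X_\alpha)$. The relation $\theta g=g\theta$ for $g\in T^\theta$ gives $(\theta^i\alpha)(g)=\alpha(g)$, so $g\cdot Y_{(\alpha)}=\alpha(g) Y_{(\alpha)}$. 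Consequently
\[
\tau(g)=\prod_{(\alpha)\in\Phi/\theta}\alpha(g),\qquad g\in T^\theta.
\]
The involution $\alpha\mapsto-\alpha$ descends to $\Phi/\theta$: pairs $\{(\alpha),(-\alpha)\}$ with $(-\alpha)\neq(\alpha)$ contribute $\alpha(g)(-\alpha)(g)=1$, and a self-opposed orbit must satisfy $\theta^k(\alpha)=-\alpha$ for some $k$ with $2k\equiv 0\pmod m$. Hence if $m$ is odd there are no self-opposed orbits and $\tau\equiv 1$ at once. When $m$ is even, self-opposed orbits are exactly those on which $\theta^{m/2}$ acts as $-1$, and each such character $\alpha|_{T^\theta}$ has order dividing two (consistent with Remark~\ref{remark tau}); the lemma reduces to
\[
\prod_{\substack{(\alpha)\in\Phi/\theta \\ \theta^{m/2}\alpha=-\alpha}}\alpha(g)=1,\qquad g\in T^\theta.
\]

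The remaining verification proceeds type by type along Table~\ref{table 1}. For the Coxeter and twisted Coxeter (rank-one) gradings treated in Section~\ref{sec-stable rank 1}, the group $I$ equals $Z(G)$ or $Z(G)^\vartheta$, so every root restricts trivially to $I$ and $\tau\equiv 1$ with no further work. For the higher-rank stable gradings in types $A_n^2$, $B_n$, $C_n$, $D_n$, $D_n^2$, I would use the standard $\epsilon_i$-coordinate realizations of $\Phi$, the explicit form $\theta=\on{Int}(n_w)\circ\vartheta_{\mathbf s}$ supplied by~\cite{RLYG}, and the description of the generators of $I=T^\theta$ that will be given in Lemmas~\ref{lemma-type B}--\ref{lemma-type D-2}. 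In each model, $\theta^{m/2}$ acts on $\epsilon$-coordinates as a signed permutation, the self-opposed orbits are indexed by an explicit subset of roots of the form $\pm(\epsilon_i\pm\epsilon_j)$ or $\pm\epsilon_i$, and the action of a generator of $I$ on each such root is a product of prescribed roots of unity in $\zeta_m$; the displayed identity then becomes a direct power-of-$\zeta_m$ bookkeeping that can be carried out in each of the four families.

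The main obstacle will be the type $D_n$ and $D_n^2$ families, where $I$ has the largest order and can contain half-spin elements, so that $\alpha|_{T^\theta}$ genuinely takes values in $\{\pm 1\}$ for individual self-opposed orbits. Here the cancellation must come from grouping self-opposed orbits according to the residual permutation symmetry of $w$ and using the precise relations among the generators of $I$ recorded in Section~\ref{ssec-cox-D}; absent these specifics I do not see a uniform argument, which is why the proof remains case-by-case as noted in the text.
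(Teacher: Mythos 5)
Your proof follows essentially the same route as the paper's. You derive the formula $\det(g|_{\fg_1}) = \prod_{(\alpha)\in\Phi/\theta}\alpha(g)$ via the same eigenvector construction (your $Y_{(\alpha)}$ is precisely the paper's $Y_{\alpha,1}$, since $\theta^i(X_\alpha)=(\prod_{j<i}c_{\theta^j\alpha})X_{\theta^i\alpha}$), and then, like the paper, invoke the explicit generator descriptions of $I$ from the classical-type lemmas for a case-by-case verification that neither of you writes out. The added pairing observation — that non-self-opposed $\theta$-orbits cancel in pairs since $\alpha(g)(-\alpha)(g)=1$, that self-opposed orbits are those with $\theta^{m/2}\alpha=-\alpha$ (hence none if $m$ is odd), and that $\alpha(g)^2=1$ on such orbits — is a clean structural refinement not made explicit in the paper. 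It automatically explains part of what the paper asserts as a computation ($\alpha(t)=\pm 1$) and shrinks the set of orbits to inspect, but it does not remove the case-by-case step; the residual identity $\prod_{\theta^{m/2}\alpha=-\alpha}\alpha(g)=1$ still needs the type-by-type data, which is where the paper's proof also ends.
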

\begin{proof}
Recall the basis $X_\alpha\in\Lg_\alpha$, $\alpha\in\Phi$. There exists $c_\alpha\in\bC$ such that
$
\theta(X_\alpha)=c_\alpha X_{\theta\alpha}.
$
We have
\beqn
\prod_{i=0}^{m-1}c_{\theta^i\alpha}=1.
\eeqn
One readily checks that
\bern
&&\Lg_i=\{x\in\Lg\,|\,\theta(x)=\zeta_m^ix\}=\Lt_i\oplus\bigoplus_{\alpha\in\Phi/\sim}\bC Y_{\alpha,i},\ \\&& \Lt_i=\Lt\cap\Lg_i,\ Y_{\alpha,i}=\sum_{k=0}^{m-1}\left(\zeta_m^{-ki}(\prod_{j=0}^{k-1}c_{\theta^j\alpha})X_{\theta^k\alpha}\right), \,0\leq i\leq m-1,
\eern
where $\Lt=\on{Lie}\Ts$.
Let $t\in Z_K(\fa)=\Ts^\theta$. It follows that $\on{det}(t|_{\Lg_1})=\prod_{\alpha\in\Phi/\sim}\alpha(t)$.

Using the explicit generators of $I$ in Lemmas~\ref{type A-lemma-1}-\ref{lemma-type D-2}, one checks  that $\alpha(t)=\pm 1$ for each $t\in I$ and $\alpha\in\Phi$. Moreover, $\#\{\alpha\in\Phi\mid\alpha(t)=-1\}/m$ is an even integer. The lemma follows.
\end{proof}

Our proofs of the theorems in this section follow from reduction to the (semisimple) rank one calculations in section~\ref{sec-stable rank 1 explicit}. In the reductions to (semisimple) rank one situation the groups $G_s$ are reductive. In section~\ref{Isogenies} we explain how we can reduce the calculation to almost simple simply connected groups. The first step is to pass to the derived group $(G_s)_{\on{der}}$ which, if we start with an almost simple simply connected $G$ is also simply connected.  However, the group $(G_s)_{\on{der}}$ is not necessarily almost simple. It can be a product of almost simple groups which $\theta$ permutes. The reduction in this case is explained in section~\ref{Isogenies}, see formula~\eqref{reduction-prod}. In our situation it will turn out that $(G_s)_{\on{der}}$ is either almost simple of the same type as $G$ or a product of $m/2$ copies of $SL(2)$'s which are cyclically permuted by $\theta$. 
 
 We explain the two cases in a bit more detail. Suppose first that $s\in W_\fa$ is a distinguished  reflection of the form $\tau_i$ (see~\eqref{eqn-reflections}). Then there exist $\beta_1,\ldots,\beta_a\in\Phi$ such that $\cup_{j=1}^a(\beta_j)$ (see~\eqref{eqn-thetaorbit}) is the set of roots for $(G_s,\Ts)$ and $(G_s)_{\on{der}}$ is of the same type as $G$. Moreover, $\theta|_{(G_s)_{\on{der}}}$ is an order $m$ stable (semisimple) rank 1 automorphism of $(G_s)_{\on{der}}$ discussed in section~\ref{sec-stable rank 1 explicit}. Thus we determine the polynomials $R_{\chi,\tau}$ using the results in section~\ref{sec-stable rank 1 explicit}.   Suppose now that $s\in W_\fa$ is a distinguished  reflection of order 2 of the form $s_{ij}^{(k)}$ (see~\eqref{eqn-reflections}). Then there exists $\beta\in\Phi$ such that $(\beta)$ (see~\eqref{eqn-thetaorbit}) is the set of roots for $(G_s,\Ts)$, which forms a root system of type $A_1^{m/2}$. In this case $$I_s=\langle\prod_{a=0}^{m/2-1}\theta^a\check\beta(-1)\rangle\cong \mu_2$$ and $\theta^{m/2}$ restricts to a (non-trivial, stable) involution on $SL(2)$. By the reduction equation~\eqref{reduction-prod} and~\eqref{mono-type A inner}, we have
\beq\label{poly-order 2}
R_{\chi,s}=(x-1)^2\text{ if $\chi|_{I_s}=1$},\ \ R_{\chi,s}=x^2-1\text{ if $\chi|_{I_s}\neq1$}.
\eeq

In the remainder of the section we carry out the above analysis in each type and discuss the endoscopic point of view.

\subsection{Type \texorpdfstring{$A$}{Lg}}\label{ssec-pfA}

Let $G=SL(N)$. Let $\theta$ be a stable outer automorphism of $G$ of order $m=2d$, $d=2l+1$, where $d|N$ or $d|(N-1)$, see Table~\ref{table 1}.  Let us write
\beqn
r=\frac{N}{d} \text{ (or $r=\frac{N-1}{d}$).}
\eeqn
Let $t_{ij}$ denote the transposition $(i\ \ j)\in S_{N}$, the Weyl group of $G$. We can and will assume that (see~\cite[section 7]{RLYG})
\beqn
\text{$\theta|_{\Lt_{\mathbf{s}}=\on{Lie}(\Ts)}=-w_0$, $w_0=\tau_1\ldots\tau_r$ and}
\eeqn
\beqn
\tau_i=t_{{(i-1)d+1},{(i-1)d+2}}t_{{(i-1)d+2},{(i-1)d+3}}\cdots t_{{(i-1)d+{d-1}},{(i-1)d+d}},\,1\leq i\leq r.
\eeqn
The little Weyl group is
\beqn
W_\fa=\langle s_1,\ldots,s_{r-1},\tau_r\rangle\cong G(d,1,r),\ s_k=\prod_{i=1}^{d}t_{(k-1)d+i, kd+i},\ 1\leq k\leq r-1.
\eeqn

\begin{lemma} \label{type A-lemma-1}
Suppose that $N=rd=rm/2$. 

\noindent{\rm(i)} We have
\beqn
I=\langle \gamma_1,\ldots,\gamma_{r-1}\rangle\cong\mu_2^{r-1},
\ 
\gamma_k=\prod_{i=1}^{d}\check\alpha_{(k-1)d+2i-1}(-1),\ 1\leq k\leq r-1.
\eeqn
{\rm (ii)} A set of representative of $W_\fa$-orbits in $\hat I$ is $\{\chi_k,\, 0\leq k\leq r/2\}$, defined by
\beqn
\chi_k(\gamma_i)=1,\ i\neq k,\ \chi_k(\gamma_k)=-1.
\eeqn
{\rm(iii)} For each $0\leq k\leq \frac{r}{2}$, we have
\bern
&&W_{\fa,\chi_k}^0\cong G(d,1,k)\times G(d,1,r-k),\\
&&\cH_{W_{\fa,\chi_k}^0}=\cH^{l+1,l}(G(d,1,k))\otimes \cH^{l+1,l}(G(d,1,r-k)).
\eern
Moreover,
\begin{eqnarray*}
W_{\fa,\chi_k}^{en}=W_{\fa,\chi_k}^0,\,0\leq k\leq r/2;\ W_{\fa,\chi_k}=W_{\fa,\chi_k}^0,\, k\neq r/2,\text{ and }\,W_{\fa,\chi_{r/2}}/ W_{\fa,\chi_{r/2}}^0\cong\bZ/2\bZ\,.
\end{eqnarray*}

\end{lemma}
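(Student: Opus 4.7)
For part (i), the plan is to compute $\Ts^\theta$ directly. Since $\theta$ acts on $\Ts$ by $t \mapsto w_0(t^{-1})$ with $w_0 = \tau_1\cdots\tau_r$ a product of $d$-cycles on the $r$ blocks $B_i = \{(i-1)d+1,\ldots,id\}$, the invariance condition forces the diagonal entries of $t$ to alternate between $a_i$ and $a_i^{-1}$ within each block $B_i$, and the cyclic closure requires $a_i^2 = 1$. Since $d = 2l+1$ is odd, $a_i = a_i^{-1} = \pm 1$, so $t$ is constant and equal to $\pm 1$ on each block. Imposing the $SL$ condition gives $I \cong \mu_2^{r-1}$, and a direct inspection that $\check\alpha_{(k-1)d+2i-1}(-1)$ has $-1$'s precisely in positions $(k-1)d+2i-1$ and $(k-1)d+2i$ shows that $\gamma_k$ has $-1$'s on blocks $B_k$ and $B_{k+1}$ and $+1$ elsewhere; the $\gamma_k$'s then clearly generate $I$.

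For part (ii), I would first observe that $W_\fa = G(d,1,r)$ acts on $I$ only through its $S_r$ quotient, since each $\tau_i$ is a cyclic permutation within a single block and hence fixes every block-constant element of $I$. Under the identification of $\hat I$ with subsets of $\{1,\ldots,r\}$ modulo complementation (where $\chi_S(\epsilon) = \prod_{i \in S}\epsilon_i$), the $S_r$-orbits are parametrised by $|S|$, giving the range $0 \leq k \leq r/2$. An easy verification using the computation above identifies $\chi_k$ with $S = \{1,\ldots,k\}$.

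For part (iii), the stabiliser $W_{\fa,\chi_k}$ is found as follows: its image in $S_r$ is the stabiliser of $\{1,\ldots,k\}$, namely $S_k \times S_{r-k}$, augmented by an order-two swap when $k = r/2$, and all $\tau_i$'s are automatically in the stabiliser; this produces the claimed $W_{\fa,\chi_k}$ and its $\bZ/2\bZ$ extension when $k = r/2$. To find $W_{\fa,\chi_k}^0$, I would go through the distinguished reflections: each $\tau_i$ lies in $W_{\fa,\chi_k}$ with $e_{\tau_i}=1$, while $s_{ij}^{(k')}$ lies in $W_{\fa,\chi_k}$ iff $i$ and $j$ are both in $\{1,\ldots,k\}$ or both in $\{k+1,\ldots,r\}$, yielding $W_{\fa,\chi_k}^0 = G(d,1,k) \times G(d,1,r-k)$.

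To assemble the Hecke algebra, I would apply the rank-one reductions of Section~\ref{sec-stable rank 1 explicit} reflection-by-reflection. For $s = \tau_i$, the derived subgroup $(G_{\tau_i})_{\on{der}} \cong SL(d) = SL(2l+1)$ carries a stable outer automorphism of order $2d$ of type $A_{2l}^{2}$, so by~\eqref{type A2-odd-1} we get $R_{\chi_k,\tau_i}(x) = (x-1)^{l+1}(x+1)^l$ (independent of $\chi_k$ because $I = \{1\}$ in that rank-one case). For $s = s_{ij}^{(k')} \in W_{\fa,\chi_k}^0$, I would identify $I_s$ with the subgroup of $I$ generated by the element having $-1$'s precisely on blocks $B_i, B_j$; since $\{i,j\}$ is contained in $\{1,\ldots,k\}$ or in $\{k+1,\ldots,r\}$, $\chi_k|_{I_s}$ is trivial, and~\eqref{poly-order 2} yields $R_{\chi_k,s}(x) = (x-1)^2$. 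Combining gives $\cH_{W_{\fa,\chi_k}^0} = \cH^{l+1,l}(G(d,1,k)) \otimes \cH^{l+1,l}(G(d,1,r-k))$. Finally, for the endoscopic description, $\check\chi_k$ can be realised as a diagonal element of $\check G = PGL(N)$ that is block-constant with value $-1$ on the first $k$ blocks and $+1$ on the rest, so $\check G(\check\chi_k)^0 \cong P(GL(kd) \times GL((r-k)d))$, on which $\check\theta$ restricts to a stable order-$2d$ outer automorphism of each factor with little Weyl group $G(d,1,k)$ (respectively $G(d,1,r-k)$), giving $W_{\fa,\chi_k}^{en} = W_{\fa,\chi_k}^0$. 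The main obstacle is the rank-one reduction for the short reflections: verifying that $I_s$ as a subgroup of $I$ has the stated description requires careful bookkeeping of the orbit sum $\prod_a \theta^a\check\beta(-1)$ against the block-constant identification of $I$ from part (i).
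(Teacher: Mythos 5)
Your proposal is correct and follows essentially the same route as the paper's proof: compute $I$ as block signs, reduce reflection-by-reflection to the rank-one polynomials from Section 6, and use the endoscopic description to identify $W_{\fa,\chi_k}^{en}$. The one cosmetic improvement in your version is observing up front that the $W_\fa$-action on $I$ factors through the $S_r$-quotient and using the ``subsets of $\{1,\ldots,r\}$ modulo complementation'' model for $\hat I$, which neatly replaces the paper's explicit tabulation of $s_{ij}^{(a)}\cdot\gamma_k$; the remaining computations ($I_{s_{ij}^{(a)}}=\langle\gamma_i\cdots\gamma_{j-1}\rangle$, $I_{\tau_i}=\{1\}$, and the dual-group centralizer $P(GL(kd)\times GL((r-k)d))$) coincide with the paper's.
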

\begin{proof}
Part (i) follows from the fact that $I=(\Ts)^\theta$. Let  (see~\eqref{eqn-reflections} and~\eqref{eqn-reflections-2})
$$s_{ij}^{(k)}=\tau_j^{k}\circ s_{ij}^{(0)}\circ\tau_j^{-k},\ \,
s_{ij}^{(0)}=s_is_{i+1}\cdots s_{j-2}s_{j-1}s_{j-2}\cdots s_i.$$ One checks readily that
\bern
&&\tau_k\gamma_i=\gamma_i;\ s_{ij}^{(a)}(\gamma_k)=\gamma_k\text{ if $i,j\neq k,k+1$},\ s_{ik}^{(a)}(\gamma_k)=\gamma_i\cdots\gamma_k,\ s_{i,k+1}^{(a)}(\gamma_k)=\gamma_i\cdots\gamma_{k-1},\\
&&s_{k,k+1}^{(a)}(\gamma_k)=\gamma_{k},\ s_{kj}^{(a)}(\gamma_k)=\gamma_{k+1}\cdots\gamma_{j-1},\text{ if $j\geq k+2$},\ s_{k+1,j}^{(a)}(\gamma_k)=\gamma_k\cdots\gamma_{j-1}.
\eern
It then follows that
\beqn
s_{ij}^{(a)}.\chi_k=\chi_k\Leftrightarrow j\leq k\text{ or }i\geq k+1;\ \ \tau_i.\chi_k=\chi_k.
\eeqn
Hence
\begin{equation}
\bega\label{Wachi-A2}
W_{\fa,\chi_k}^0=\langle s_{ij}^{(a)},j\leq k\text{ or }i\geq k+1,\tau_b, 1\leq b\leq r  \rangle\\
=\langle s_1,\ldots,s_{k-1},\tau_k\rangle\times\langle s_{k+1},\ldots,s_{r-1},\tau_{r}\rangle\cong G(d,1,k)\times G(d,1,r-k).
\eega
\end{equation}
Let $\sigma_0=\prod_{j=1}^{r/2}s_{j,r+1-j}^{(0)}$. It is easy to check that $\sigma_0\in W_{\fa,\chi_{r/2}}$ and $\sigma_0\not\in W_{\fa,\chi_{r/2}}^0$. Note that
\beq
\sigma_0s_i\sigma_0=s_{r-i},\,\sigma_0\tau_i\sigma_0=\tau_{r+1-i}.
\eeq
Part (ii) and the assertions on $W_{\fa,\chi_k}^0$ and $W_{\fa,\chi_k}$ in part (iii) follow.

From the endoscopic point of view, we have (assuming that $\check \Ts$ is the torus consisting of diagonal matrices)
\bern
&&(\check \Ts)^{\check\theta}=\{\on{\diag}(\lambda_1I_d,\lambda_2I_d,\ldots,\lambda_rI_d)\in PGL(N)\mid \lambda_1^2=\lambda_2^2=\cdots=\lambda_r^2\}\cong\mu_2^{r-1},\\
&&\chi_k=\on{diag}(-I_{dk},I_{(r-k)d})\in PGL(N).
\eern
Moreover, 
\bern
&&\check G(\chi_k)^0\cong P\big(GL(kd)\times GL((r-k)d)\big)\,,\\
&&\check G(\chi_k)/\check G(\chi_k)^0\cong 1\text{ (resp. $\bZ/2\bZ$)}\text{ if $k\neq r/2$ (resp. $k=r/2$)}.
\eern 
It follows that $W_{\fa,\chi_k}^{en}=W_{\fa,\chi_k}^0$. Note that $\check\theta$ restricts to an order $2d$ stable outer automorphism of each factor of $\check G(\chi_k)^0$.

It remains to prove the claim on $\cH_{W_{\fa,\chi_k}^0}$. We have
\beqn
s_{ij}^{(k)}=\prod_{a=1}^{d}t_{(i-1)d+a, (j-1)d+\overline{k+a}}\in W=S_N
\eeqn
where $\overline{k+a}\in[1,d]$ is congruent to $k+a$ mod $d$. The roots of $(G_{s_{ij}^{(k)}},\Ts)$ are $$(\beta_{ij}^{(k)})=\{\theta^{a}\beta_{ij}^{(k)}\mid\,a=0,\ldots,m-1\}=\{\pm(\epsilon_{(i-1)d+a}-\epsilon_{ (j-1)d+\overline{k+a}})\mid a=1,\ldots,d\},$$
where $\beta^{(k)}_{ij}=\epsilon_{(i-1)d+1}-\epsilon_{ (j-1)d+\overline{k+1}}$. It follows that 
\beqn
I_{s_{ij}^{(k)}}=\langle\prod_{a=1}^{d}\check\theta^{a}\beta_{ij}^{(k)}(-1)\rangle=\langle \gamma_i\cdots\gamma_{j-1}\rangle\cong\mu_2.
\eeqn
A set of simple roots of $(G_{\tau_i},\Ts)$ can be chosen as $\alpha_{(i-1)d+a},\,a=1,\ldots,d-1$. Moreover $\theta$ restricts to an order $2d$ stable (outer) automorphism on $(G_{\tau_i})_{\on{der}}\cong SL(d)$. 
Thus (see~\eqref{Isder} and \S\ref{ssec-tcae})
\begin{eqnarray*}
\ I_{\tau_i}=1.
\end{eqnarray*}
It follows that
\beq\label{charsA}
\chi_k|_{I_{s_{ij}^{(a)}}}=1\Leftrightarrow\text{ $j\leq k$ or $i\geq k+1$}\Leftrightarrow s_{ij}^{(a)}\in W_{\fa,\chi_k}^0,\text{ and } \chi_k|_{I_{\tau_i}}=1.
\eeq
 In view of~\eqref{charsA}, the claim on $\cH_{W_{\fa,\chi_k}^0}$ follows from equations~\eqref{poly-order 2},~\eqref{Wachi-A2} and~\eqref{type A2-odd-1}.
\end{proof}

\begin{lemma}\label{type A-lemma-2}
 Suppose that $N=rd+1=rm/2+1$. 
 
\noindent {\rm(i)} We have 
\beqn
I=\langle\gamma_1,\ldots,\gamma_r\rangle\cong\mu_2^r,\,\gamma_i=\prod_{j=1}^l\check\alpha_{(i-1)d+2j-1}(-1)\prod_{j=id}^{rd}\check\alpha_j(-1),\,1\leq i\leq r.
\eeqn

\noindent{\rm(ii)} A set of representatives of $W_\fa$-orbits in $\hat I$ is $\chi_k$, $0\leq k\leq r$, defined by
$$\chi_k(\gamma_i)=-1\text{ if }1\leq i\leq k,\ \chi_k(\gamma_i)=1\text{ if }k+1\leq i\leq r.$$
{\rm (iii)} We have\beqn
\begin{gathered}
W_{\fa,\chi_k}=W_{\fa,\chi_k}^0=W_{\fa,\chi_k}^{en}\cong G(d,1,k)\times G(d,1,r-k),\\
\cH_{W_{\fa,\chi_k}^0}\cong\cH^{l+2,l-1}(G(d,1,r-k))\otimes \cH^{l+1,l}(G(d,1,k)),\ 0\leq k\leq r.
\end{gathered}
\eeqn

\end{lemma}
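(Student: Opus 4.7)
The proof should closely mirror that of Lemma~\ref{type A-lemma-1}, with the modifications needed to accommodate the ``extra'' simple root $\alpha_{rd}$ that now sits at the end of the Dynkin diagram.

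For Part (i), I would directly verify that each listed $\gamma_i$ lies in $(\Ts)^\theta$ using $\theta|_{\Lt} = -w_0$ with $w_0=\tau_1\cdots\tau_r$, and check linear independence in the cocharacter lattice modulo $2$. The asymmetric form $\gamma_i = \prod_{j=1}^l\check\alpha_{(i-1)d+2j-1}(-1)\prod_{j=id}^{rd}\check\alpha_j(-1)$ reflects that the ``missing'' slot (the extra index $rd+1$) breaks the neat symmetric form of the $N=rd$ case: the ``tail'' factor is needed to push the element into $\Ts^\theta$. A dimension count then shows these $\gamma_i$ generate all of $I\cong \mu_2^r$. For Part (ii), I would work out the $W_\fa$-action on the $\gamma_i$'s by checking how the generators $s_k,\tau_r$ act. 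Critically, in contrast to Lemma~\ref{type A-lemma-1}, there is no element analogous to $\sigma_0 = \prod_j s_{j,r+1-j}^{(0)}$ that would permute the two halves: the partition $N = k d + (r-k)d + 1$ is asymmetric because the lone extra index is fixed. Consequently all $r+1$ characters $\chi_k$ lie in distinct orbits, and a counting argument $|\hat I|/|W_\fa\text{-orbit of }\chi_k|$ shows they exhaust $\hat I/W_\fa$.

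For Part (iii), I would first compute $W_{\fa,\chi_k}$ by testing the action of each distinguished reflection on $\chi_k$. The analogue of the computation \eqref{Wachi-A2} yields $W_{\fa,\chi_k}^0 \cong \langle s_1,\ldots,s_{k-1},\tau_k\rangle\times\langle s_{k+1},\ldots,s_{r-1},\tau_r\rangle \cong G(d,1,k)\times G(d,1,r-k)$. The equality $W_{\fa,\chi_k}=W_{\fa,\chi_k}^0$ then follows precisely because there is no flip element swapping the two factors, whereas in Lemma~\ref{type A-lemma-1} one had to pass through $\sigma_0$ only at $k=r/2$. For the endoscopic description, I would identify $\chi_k$ with a diagonal element in $\check G = PGL(N)$ with eigenvalue $-1$ on a $kd$-dimensional block and $+1$ on the remaining $(r-k)d+1$-dimensional block; this gives $\check G(\chi_k)^0 \cong P(GL(kd)\times GL((r-k)d+1))$ and $\check G(\chi_k)=\check G(\chi_k)^0$, whence $W_{\fa,\chi_k}^{en}=W_{\fa,\chi_k}^0$.

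The main work is the Hecke algebra identification, and this is where the asymmetry $\cH^{l+2,l-1}\otimes \cH^{l+1,l}$ arises. For each distinguished reflection $s$ in $W_{\fa,\chi_k}^0$ I would compute $R_{\chi_k,s}$ by rank-one reduction. For $s_{ij}^{(a)}$-type reflections one verifies $\chi_k|_{I_{s_{ij}^{(a)}}}=1$ (as in the analogue of~\eqref{charsA}), so~\eqref{poly-order 2} produces the $(x-1)^2$ relation. The delicate point concerns the $\tau_i$'s: for $1\le i\le k$ (a factor purely in the ``first half''), $(G_{\tau_i})_{\on{der}}\cong SL(d)$ with $\theta$ restricting to an order-$2d$ stable outer automorphism of type $A^2_{d-1}$, and~\eqref{type A2-odd-1} gives $(x-1)^{l+1}(x+1)^l$, producing the $\cH^{l+1,l}$ parameters. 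For $k<i\le r$, however, the $\tau_i$ reflection sits adjacent to the extra simple root $\alpha_{rd}$ after enlarging through $\theta$-conjugation (for $i=r$ directly, for other $i>k$ via the braid action inside the second factor), and $(G_{\tau_i})_{\on{der}}$ involves a root system enlarged to $A_d$. The rank-one reduction then lands in the type $A^2_{d}$ setup on $SL(d+1)$ with $2n-1=d=2l+1$, i.e.\ $n=l+1$, and the trivial-character formula~\eqref{type A2-even-1} produces $(x-1)^{l+2}(x+1)^{l-1}$, giving the $\cH^{l+2,l-1}$ parameters. The main obstacle is verifying rigorously that this second rank-one reduction really does produce the $A^2_{d}$ situation (rather than naively $A^2_{d-1}$), i.e.\ identifying the enlarged root system of $G_{\tau_i}$ correctly; once this is done, the tensor product structure of the Hecke algebra follows from the product decomposition of $W_{\fa,\chi_k}^0$.
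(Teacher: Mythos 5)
Your overall strategy is the right one and matches the paper: mirror the proof of Lemma~\ref{type A-lemma-1}, compute the $W_\fa$-action on the $\gamma_i$ (here it is simply $s_{ij}^{(a)}\colon\gamma_i\leftrightarrow\gamma_j$ and $\tau_i(\gamma_k)=\gamma_k$, cleaner than in Lemma~\ref{type A-lemma-1}), deduce $W_{\fa,\chi_k}^0\cong G(d,1,k)\times G(d,1,r-k)$ with no flip element, do the endoscopy via $\chi_k=\on{diag}(-I_{dk},I_{(r-k)d},1)\in PGL(N)$, and finish by rank-one reduction. The parts (i), (ii), and the identification of $W_{\fa,\chi_k}^{en}=W_{\fa,\chi_k}^0=W_{\fa,\chi_k}$ are all sound.

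However, your treatment of the rank-one reduction for the $\tau_i$'s contains a genuine conceptual error. You claim that for $1\le i\le k$ the derived subgroup $(G_{\tau_i})_{\on{der}}\cong SL(d)$ (type $A_{d-1}^2$), while for $i>k$ the root system gets ``enlarged'' to give $(G_{\tau_i})_{\on{der}}\cong SL(d+1)$ (type $A_d^2$). This cannot be right: $G_{\tau_i}=Z_G(\fa_{\tau_i})$ is defined purely in terms of the reflection $\tau_i$ and its fixed hyperplane in $\fa$; it does not know which character $\chi_k$ you will later restrict. In fact, for $N=rd+1$ one has $(G_{\tau_i})_{\on{der}}\cong SL(d+1)$ for \emph{every} $i$, with simple roots $\{\alpha_{(i-1)d+a},\,a=1,\dots,d-1,\,-(\epsilon_{id}-\epsilon_{rd+1})\}$; the extra index $rd+1$ enters the centralizer for all $i$, not only for the ``second half.'' Correspondingly, and unlike the $N=rd$ case where $I_{\tau_i}=1$, here $I_{\tau_i}=\langle\gamma_i\rangle\cong\mu_2$ is nontrivial. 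The asymmetry in the Hecke parameters then comes entirely from the character: $\chi_k|_{I_{\tau_i}}=1$ iff $i\ge k+1$. For $i>k$ one applies the trivial-character polynomial of type $A_{d}^2=A_{2(l+1)-1}^2$, namely~\eqref{type A2-even-1}, giving $(x-1)^{l+2}(x+1)^{l-1}$; for $i\le k$ one applies the nontrivial-character polynomial~\eqref{type A2-even-2}, giving $(x-1)^{l+1}(x+1)^{l}$.

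Your error happens not to change the final answer, because the $A_{d-1}^2$ trivial-character polynomial~\eqref{type A2-odd-1} (which you invoke, namely $(x-1)^{l+1}(x+1)^l$ with $d=2l+1$) coincides numerically with the $A_{d}^2$ nontrivial-character polynomial~\eqref{type A2-even-2} (also $(x-1)^{l+1}(x+1)^l$ with $n=l+1$). But this numerical coincidence is fortuitous and, more importantly, the supporting claim about $(G_{\tau_i})_{\on{der}}$ changing with $k$ is false on its face. You should instead compute $I_{\tau_i}$ and $\chi_k|_{I_{\tau_i}}$ directly, as the paper does in~\eqref{charsA2}, and let the Hecke parameters fall out from the character restriction rather than from an imagined dependence of the centralizer on $\chi_k$.
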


\begin{proof}
The proof is entirely similar to that of Lemma~\ref{type A-lemma-2}. We will only discuss the key points. 
We have
\begin{eqnarray*}
&&s_{ij}^{(a)}:\ \gamma_{i}\mapsto\gamma_{j},\ \gamma_{j}\mapsto\gamma_{i},\ \gamma_k\mapsto\gamma_k,\ k\neq i, j;\ \tau_i(\gamma_k)=\gamma_k.
\end{eqnarray*}
It follows that
\beq\label{Wachi-A1}
W_{\fa,\chi_k}^0=\langle s_1,\ldots,s_{k-1},\tau_k\rangle\times\langle s_{k+1},\ldots,s_{r-1},\tau_r\rangle\cong G(d,1,k)\times G(d,1,r-k).
\eeq
From the endoscopic point of view, we have
\bern
&&(\check \Ts)^{\check\theta}=\{\on{\diag}(\lambda_1I_d,\lambda_2I_d,\ldots,\lambda_rI_d,1)\in PGL(N)\mid \lambda_i^2=1,\,i=1,\ldots,r\}\cong\mu_2^{r},\\
&&\chi_k=\on{diag}(-I_{dk},I_{(r-k)d},1)\in PGL(N).
\eern
Moreover, 
\bern
&&\check G(\chi_k)=\check G(\chi_k)^0\cong P\big(GL(kd)\times GL((r-k)d+1)\big).
\eern 
It follows that $W_{\fa,\chi_k}^{en}=W_{\fa,\chi_k}^0=W_{\fa,\chi_k}$. Note that $\check\theta$ restricts to an order $2d$ stable outer automorphism of each factor of $\check G(\chi_k)^0$. 

We have 
\beqn
I_{s_{ij}^{(k)}}=\langle\gamma_i\gamma_{j}\rangle\cong\mu_2.
\eeqn
 Moreover $\theta$ restricts to an order $2d$ stable (outer) automorphism on $(G_{\tau_i})_{\on{der}}\cong SL(d+1)$. A set of simple roots of $(G_{\tau_i},\Ts)$ can be chosen as $\{\alpha_{(i-1)d+a},\,a=1,\ldots,d-1,\,-(\epsilon_{id}-\epsilon_{rd+1})\}$.
Thus (see~\eqref{Isder} and \S\ref{ssec-tcao})
\beqn
 I_{\tau_k}=\langle\gamma_k\rangle\cong\mu_2.
\eeqn
It follows that 
\beq\label{charsA2}
\chi_k|_{I_{s_{ij}^{(a)}}}=1\Leftrightarrow\text{$j\leq k$ or $i\geq k+1$}\Leftrightarrow s_{ij}^{(a)}\in W_{\fa,\chi_k}^0;\ \ \chi_k|_{\tau_i}=1\Leftrightarrow i\geq k+1.
\eeq
 In view of~\eqref{charsA2}, the claim on $\cH_{W_{\fa,\chi_k}^0}$ follows from equations~\eqref{poly-order 2},~\eqref{Wachi-A1},~\eqref{type A2-even-1} and~\eqref{type A2-even-2}.
\end{proof}

\subsection{Type B}\label{sec-typeB}
Let $G=Spin(2N+1)$. Let $\theta$ be a stable automorphism of order $m=2l$, where $l|N$, see Table~\ref{table 1}. Let $r=N/l$. We identify the Weyl group $W$ of $G$ with the group $W_N$ of permutations $\sigma$ on $\{1,\ldots, N, -1,\ldots, -N\}$ such that $\sigma(i)=j\Leftrightarrow \sigma(-i)=-j$.  Let us write
\beqn
t_{ij}=(i\ \ j)(-i\ \ -j)\,\text{ and }\ t_a=(a\ \ -a)\in W_N.
\eeqn

We can assume that (see~\cite[section 7]{RLYG})
\beq\label{theta-type B}
\bega
\theta|_{\Lt=\on{Lie}(\Ts)}=\prod_{i=1}^r\tau_i,\\
\,\tau_i=t_{k_i+1,k_i+2}t_{k_i+2,k_i+3}\cdots t_{k_i+l-1,k_i+l}t_{k_i+l}\in W,\,k_i=(i-1)l,\ 1\leq i\leq r.
\eega
\eeq
The little Weyl group $$W_\fa=\langle s_1,\ldots,s_{r-1},\tau_r\rangle\cong G(m,1,r)$$ where
\bern
&&s_k=\prod_{a=1}^lt_{(k-1)l+a,kl+a}\,,\ 1\leq k\leq r-1.
\eern

\begin{lemma}\label{lemma-type B}
{\rm (i)}We have $I=\langle\gamma_1,\ldots,\gamma_r\rangle\cong\mu_2^r$, where
\bern
&&\gamma_k=\prod_{j=1}^l\check\alpha_{(k-1)l+2j-1}(-1),\ 1\leq k\leq r-1,\ \gamma_r=\check\alpha_{rl}(-1).
\eern
{\rm (ii)} A set of representative of $W_\fa$ orbits in $\hat I$ is 
$
\text{\{$\chi_k$, $0\leq k\leq r/2$, $\chi_r$\}}
$
defined by
\beqn
\chi_k(\gamma_i)=1,\ i\neq k,\ \chi_k(\gamma_k)=-1.
\eeqn
{\rm (iii)} We have 
\bern
&&W_{\fa,\chi_k}^0=W_{\fa,\chi_k}^{en}\cong G(m,1,k)\times G(m,1,r-k),\\
&&\cH_{W_{\fa,\chi_k}^0}\cong \cH^{l+1,l-1}(G(m,1,k))\otimes\cH^{l+1,l-1} (G(m,1,r-k)),\ 0\leq k\leq \frac{r}{2};\\
&&W_{\fa,\chi_{r}}^0=W_{\fa,\chi_r}^{en}\cong G(l,1,r),\,\cH_{W_{\fa,\chi_{r}}^0}\cong\cH^{[\frac{l}{2}]+1,[\frac{l-1}{2}]}(G(l,1,r)).
\eern
Moreover, 
\begin{eqnarray*}
W_{\fa,\chi_k}=W_{\fa,\chi_k}^0,\ 0\leq k\leq \frac{r-1}{2},\ \ W_{\fa,\chi_{k}}/ W_{\fa,\chi_{k}}^0\cong\bZ/2\bZ,\,k=r/2,r.
\end{eqnarray*}
\end{lemma}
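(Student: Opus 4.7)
The plan is to follow the template set by Lemmas~\ref{type A-lemma-1} and~\ref{type A-lemma-2}, carrying out three successive computations that all reduce eventually to the rank-one polynomial calculations from section~\ref{sec-stable rank 1 explicit}. For part (i), I would verify directly from~\eqref{theta-type B} that each proposed generator $\gamma_k$ satisfies $\theta(\gamma_k) = \gamma_k$ and has order two, noting that $\check\alpha_{(k-1)l+2j-1}$ pairs in a controlled way with the transpositions $t_{k_i+a, k_i+a+1}$ in $\tau_i$. Independence of $\gamma_1,\ldots,\gamma_r$ follows by pairing with the simple roots $\alpha_{(k-1)l+2j-1}$ and $\alpha_{rl}$. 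To see these exhaust $I = (\Ts)^\theta$, it suffices to compute the order of the cokernel of $(1 - \theta): X_*(\Ts) \to X_*(\Ts)$; the stable grading hypothesis and the explicit elliptic structure of $\theta$ on $\Ts$ give $|I| = 2^r$.

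For part (ii), I would compute the action of the generators $s_1,\ldots,s_{r-1},\tau_r$ on each $\gamma_i$. Using the conjugation formulas~\eqref{eqn-reflections-2}, one finds that the $s_k$ permute $\gamma_1,\ldots,\gamma_{r-1}$ like the symmetric group acting on a standard basis (possibly modified by a product involving $\gamma_r$ coming from the short-root contribution), while $\tau_r$ fixes each $\gamma_i$. This action on $\hat I \cong \mu_2^r$ identifies the $W_\fa$-orbit structure: the characters that are nontrivial on at most one of $\gamma_1,\ldots,\gamma_{r-1}$ give the orbit representatives $\chi_k$ with $0 \leq k \leq r/2$, while the character that is nontrivial only on $\gamma_r$ gives the isolated orbit of $\chi_r$.

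Part (iii) splits naturally into identifying $W_{\fa,\chi_k}^0$, checking the endoscopic coincidence, and then computing the Hecke algebra. Stabilizer computations proceed exactly as in Lemma~\ref{type A-lemma-1}: for $0 \leq k < r/2$, the reflections $s_{ij}^{(a)}$ with $j \leq k$ or $i \geq k+1$ fix $\chi_k$, and every $\tau_i$ fixes $\chi_k$, giving $W_{\fa,\chi_k}^0 = \langle s_1,\ldots,s_{k-1},\tau_k\rangle \times \langle s_{k+1},\ldots,s_{r-1},\tau_r\rangle \cong G(m,1,k) \times G(m,1,r-k)$. For $k = r/2$ the involution $\sigma_0 = \prod_{j=1}^{r/2} s_{j,r+1-j}^{(0)}$ swaps the two factors and supplies the $\bZ/2\bZ$ extension. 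For $\chi_r$, since $\chi_r$ is nontrivial on $I_{\tau_i}$ (as is forced by $\chi_r(\gamma_r) = -1$ combined with the rank-one structure at $\tau_i$), one has $e_{\tau_i} = 2$, so $\tau_i^2 \in W_{\fa,\chi_r,s_{ij}^{(a)}}$ generates the effective subgroup of order $l$; combined with the $s_{ij}^{(a)}$, this gives $W_{\fa,\chi_r}^0 \cong G(l,1,r)$. The endoscopic identification $W_{\fa,\chi_k}^{en} = W_{\fa,\chi_k}^0$ uses the isomorphism $\hat I \cong (\Tds)^{\check\theta} \subset PSp(2N)$: for $0 \leq k \leq r/2$ one computes $\check G(\chi_k)^0 \cong P(Sp(2lk) \times Sp(2l(r-k)))$ on which $\check\theta$ restricts to order-$m$ stable gradings of type $C$, and for $\chi_r$ one finds an endoscopy group whose little Weyl group is again $G(l,1,r)$.

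The Hecke algebra computation is then assembled by running the rank-one reductions. For each distinguished reflection $s_{ij}^{(a)} \in W_{\fa,\chi_k}^0$ the character $\chi_k$ is trivial on $I_{s_{ij}^{(a)}}$ by construction, so equation~\eqref{poly-order 2} gives the quadratic $(x-1)^2$. For each $\tau_i \in W_{\fa,\chi_k}^0$ with $k \neq r$, the derived group of $G_{\tau_i}$ is $Spin(2l+1)$, $\theta|_{(G_{\tau_i})_{\on{der}}}$ is the stable order-$m$ Coxeter automorphism analyzed in Section~\ref{cal-cox}, and $\chi_k|_{I_{\tau_i}}$ is trivial, so~\eqref{mono-type B-trivial} yields $R_{\chi_k,\tau_i}(x) = (x-1)^{l+1}(x+1)^{l-1}$, contributing the factor $\cH^{l+1,l-1}$. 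For $\chi_r$ the character is the nontrivial character of $I_{\tau_i} \cong \mu_2$, so~\eqref{mono-type B-nontrivial} gives $R_{\chi_r,\tau_i}(x) = (x^2-1)^{[l/2]+1}(x^2+1)^{[(l-1)/2]}$, which factors through $x^{e_{\tau_i}} = x^2$ as $\bar R(y) = (y-1)^{[l/2]+1}(y+1)^{[(l-1)/2]}$, contributing the factor $\cH^{[l/2]+1,[(l-1)/2]}(G(l,1,r))$. The main obstacle I anticipate is the careful bookkeeping in the $\chi_r$ case: verifying that $\chi_r$ is indeed nontrivial on $I_{\tau_i}$ for every $i$ (and not just $i = r$), which requires identifying $I_{\tau_i}$ explicitly inside $I$ via the embedding of the rank-one root datum, and then checking consistency between this reduction and the endoscopic picture where the $e_{\tau_i}$ must match the $d_{\tau_i}$ predicted by~\eqref{mono-2}.
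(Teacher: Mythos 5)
Your approach is essentially the one the paper takes: compute the conjugation action of $W_\fa$ on the generators $\gamma_i$, read off stabilizers and orbit representatives, identify the endoscopic groups $\check G(\chi_k)^0$ to pin down $W_{\fa,\chi_k}^{en}$, and then assemble the Hecke algebra by computing $I_s$ and $\chi|_{I_s}$ for each distinguished reflection and applying the rank-one tables of Section~\ref{sec-stable rank 1 explicit}. There is no genuine gap, but there are two inaccuracies in the details worth flagging.

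First, your claim that ``$\tau_r$ fixes each $\gamma_i$'' is false. The paper's explicit computation shows $\tau_i(\gamma_k)=\gamma_k\gamma_r$ whenever $k\in\{i-1,i\}$ and $k\neq r$; in particular $\tau_r(\gamma_{r-1})=\gamma_{r-1}\gamma_r$. Indeed, if $\tau_r$ fixed every generator it would act trivially on $I$ and hence fix every character, contradicting your own later (correct) deduction that $\chi_r$ is nontrivial on $I_{\tau_i}$ so that $e_{\tau_i}=2$. The slip happens to be invisible for $0\leq k\leq r/2$ because $\chi_k(\gamma_r)=1$ there, but you cannot both claim $\tau_r$ acts trivially on $I$ and claim $e_{\tau_i}=2$ for $\chi_r$. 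Second, for $W_{\fa,\chi_r}^0$ you say ``combined with the $s_{ij}^{(a)}$,'' but only those with $a$ even lie in $W_{\fa,\chi_r}^0$: one computes $I_{s_{ij}^{(a)}}=\langle\prod_{b=i}^{j-1}\gamma_b\rangle$ for $a$ even and $\langle\gamma_r\prod_{b=i}^{j-1}\gamma_b\rangle$ for $a$ odd, and since $\chi_r(\gamma_r)=-1$ the character restricts trivially precisely when $a$ is even. With these two corrections your outline coincides with the paper's proof.
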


\begin{proof}
One checks that 
\bern
&& s_{ij}^{(a)}(\gamma_k)=\gamma_k\text{ if $i,j\neq k,k+1$ or if $k=r$},\ s_{ik}^{(a)}(\gamma_k)=\gamma_i\cdots\gamma_k\text{ (resp. $\gamma_i\cdots\gamma_k\gamma_r$)} \text{ if $k\neq r$},
\\&&s_{k,k+1}^{(a)}(\gamma_k)=\gamma_{k},\ s_{i,k+1}^{(a)}(\gamma_k)=\gamma_i\cdots\gamma_{k-1}\text{ (resp. $\gamma_i\cdots\gamma_{k-1}\gamma_r$)},\text{ if $i\leq k-1$},\\
&& s_{kj}^{(a)}(\gamma_k)=\gamma_{k+1}\cdots\gamma_{j-1}\text{ (resp. $\gamma_{k+1}\cdots\gamma_{j-1}\gamma_r$)},\text{ if $j\geq k+2$},\\&& s_{k+1,j}^{(a)}(\gamma_k)=\gamma_k\cdots\gamma_{j-1}\text{ (resp. $\gamma_k\cdots\gamma_{j-1}\gamma_r$)},\text{ when $a$ is even (resp. odd);}
\\
&&\tau_i({\gamma_k})=\gamma_k,\,\text{ if $k\neq i-1,i$ or $k=r$},\ \tau_i({\gamma_k})=\gamma_k\gamma_r\text{ if }k\in\{i-1,i\}\text{ and }k\neq r.
\eern
It follows that
\bern
&&s_{ij}^{(a)}.\chi_k=\chi_k\Leftrightarrow j\leq k\text{ or }i\geq k+1,\ \ \tau_i.\chi_k=\chi_k,\text{ when $0\leq k\leq r/2$};
\\
&&s_{ij}^{(a)}.\chi_r=\chi_r\Leftrightarrow \text{$a$ is even};\ \tau_i^a.\chi_r=\chi_r\Leftrightarrow \text{$a$ is even}.\eern
Hence
\ber
&&W_{\fa,\chi_{k}}^0=\langle s_{ij}^{(a)},\,j\leq k,\,i\geq k+1,\ \tau_i\rangle=\langle s_1,\ldots,s_{k-1},\tau_k\rangle
\times\langle s_{k+1},\ldots,s_{r-1},\tau_r\rangle\label{wachi-b1}\\
&&\cong G(m,1,k)\times G(m,1,r-k), \, 0\leq k\leq r/2;\nonumber
\\
&&W_{\fa,\chi_{r}}^0=\langle s_{ij}^{(a)},\ \tau_i^2\mid a\text{ even}\rangle=\langle s_1,\ldots,s_{r-1},\tau_r^2\rangle\cong G(l,1,r).\label{wachi-b2}
\eer
From the endoscopic point of view, $\chi_k$ corresponds to \bern
&&\chi_k=\on{diag}(-I_{kl},I_{(r-k)l},I_{(r-k)l},-I_{kl})\in\check G=PSp(2N),\,0\leq k\leq r/2;
\\
&&\chi_r=\on{diag}(\underbrace{J_{l},\ldots,J_l}_{r\text{ terms}},-J_{l},\ldots,-J_l)\in\check G=PSp(2N),
\eern
where $J_l$ is defined in~\eqref{the matrix Jk}. 
 We have
\bern
&& \check G(\chi_k)^0\cong P\big(Sp(2kl)\times Sp(2rl-2kl)\big),\ 0\leq k\leq r/2; 
 \\
&&  G(\chi_r)^0\cong P\big(Sp(rl)\times Sp(rl)\big)\text{ (resp. $P(GL(rl)\times GL(rl)^*)$)}\text{ if $l$ is even (resp. odd)} \\
&&\check G(\chi_k)=\check G(\chi_k)^0,\,0\leq k<r/2,\, \check G(\chi_k)/\check G(\chi_k)^0\cong\bZ/2\bZ,\,k=r/2,r.
 \eern
For $0\leq k\leq r/2$, $\check\theta$ restricts to each factor of $\check G(\chi_k)^0$ as an order $2l$ stable automorphism. When $l$ is even,  $\check\theta|_{\check G(\chi_r)^0}$ can be identified with an order $l$ stable automorphism of type $C_{rl/2}$; when $l$ is odd, $\check\theta|_{\check G(\chi_r)^0}$ is an order $2l$ stable (outer) automorphism type $A_{rl-1}^{2}$. It follows that 
$$W_{\fa,\chi_k}^{en}\cong W_{\fa,\chi_k}^0,\ 0\leq k\leq r/2,\text{ or }k=r.$$
One checks that
\bern
&&W_{\fa,\chi_{\frac{r}{2}}}=\langle\sigma_{0}\rangle\ltimes W_{\fa,\chi_{\frac{r}{2}}}^0,\,W_{\fa,\chi_{r}}=\langle\sigma_{0}'\rangle\ltimes W_{\fa,\chi_{r}}^0,\ \sigma_0=\prod_{j=1}^{r/2}s_{j,r+1-j}^{(0)},\\
&&\sigma_0': \epsilon_{jl+i}\mapsto \epsilon_{(r-1-j)l+i+1},\ \epsilon_{jl+l}\mapsto -\epsilon_{(r-1-j)l+1},\ 0\leq j\leq r-1, 1\leq i\leq l-1.
\eern
Moroever $\sigma_0^2=1$, $(\sigma_0')^2\in W_{\fa,\chi_{r}}^0$ and 
\beq
\sigma_0s_i\sigma_0=s_{r-i},\,\sigma_0\tau_i\sigma_0=\tau_{r+1-i},\ \,\sigma_0's_i(\sigma_0')^{-1}=s_{r-i},\,\sigma_0'\tau_i(\sigma_0')^{-1}=\tau_{r+1-i}.
\eeq
We have that $\theta$ restricts to an order $2l$ stable automorphism on $(G_{\tau_i})_{\on{der}}\cong Spin(2l+1)$ and 
\beqn
I_{s_{ij}^{(k)}}=\langle\prod_{a=i}^{j-1}\gamma_a\rangle\text{ (resp. $\langle\gamma_r\prod_{a=i}^{j-1}\gamma_a\rangle$)}\cong\mu_2,\text{ if $k$ is even (resp. odd)};\ I_{\tau_k}=\langle\gamma_r\rangle\cong\mu_2.
\eeqn
Thus
\begin{subequations}
\beq\label{charsB1}
\chi_k|_{I_{s_{ij}^{(a)}}}=1\Leftrightarrow{i\geq k+1\text{ or }j\leq k}\Leftrightarrow s_{ij}^{(a)}\in W_{\fa,\chi_k}^0,\ \ \chi_k|_{I_{\tau_a}}=1,\ \text{ for }0\leq k\leq r/2;
\eeq
\beq\label{charsB2}
\chi_r|_{I_{s_{ij}^{(a)}}}=1\Leftrightarrow\text{$a$ is even}\Leftrightarrow s_{ij}^{(a)}\in W_{\fa,\chi_k}^0,\ \ \chi_r|_{I_{\tau_a}}\neq1.
\eeq
\end{subequations}
   In view of~\eqref{charsB1}, the claim on $\cH_{W_{\fa,\chi_k}^0}$, $0\leq k\leq r/2$, follows from equations~\eqref{poly-order 2},~\eqref{wachi-b1} and~\eqref{mono-type B-trivial}.
In view of~\eqref{charsB2}, the claim on $\cH_{W_{\fa,\chi_r}^0}$  follows from equations~\eqref{poly-order 2},~\eqref{wachi-b2} and~\eqref{mono-type B-nontrivial}.
\end{proof}

\subsection{Type C}

Let $G=Sp(2N)$. Let $\theta$ be a stable automorphism of order $m=2l$, where $l|N$, see Table~\ref{table 1}. Let $r=N/l$. We identify the Weyl group of $G$ with the Weyl group of type $B_N$. We can assume that $\theta|_{\Lt=\on{Lie}(\Ts)}$ is defined in the same way as in~\eqref{theta-type B} (see~\cite[section 7]{RLYG}). The little Weyl group $W_\fa\cong G(m,1,r)$ has the same generators as in type $B_N$.

\begin{lemma}\label{lemma-type C}
{\rm (i)} We have \beqn
I=\langle\gamma_1,\ldots,\gamma_r\rangle\cong\mu_2^r
\eeqn
where \begin{eqnarray*}
&&\gamma_k=\prod_{j=1}^{l/2}\check\alpha_{(k-1)l+2j-1}(-1),\ 1\leq k\leq r,\ \text{ if $l$ is even};
\\
&&\gamma_k=\prod_{j=1}^l\check\alpha_{(k-1)l+2j-1}(-1),\ 1\leq k\leq r-1,\ \gamma_r=\prod_{j=1}^{\frac{l+1}{2}}\check\alpha_{(r-1)l+2j-1}(-1),\ \text{ if $l$ is odd}.
\end{eqnarray*}
{\rm (ii)} A set of representative of $W_\fa$-orbits in $\hat I$ is $\{\chi_k,\ 0\leq k\leq r\}$, 
defined by
\begin{eqnarray*}
&&\chi_k(\gamma_i)=-1,\ 1\leq i\leq k,\ \chi_k(\gamma_i)=1,\ k+1\leq i\leq r,\ \text{when $l$ is even};
\\
&&\chi_k(\gamma_k)=-1,\ \chi_k(\gamma_i)=1,\ i\neq k,\ \text{when $l$ is odd}.
\end{eqnarray*}
{\rm (iii)} We have that
\bern
&&W_{\fa,\chi_k}=W_{\fa,\chi_k}^0\cong G(m,1,k)\times G(m,1,r-k),\ 0\leq k\leq r,
\\
&&\cH_{W_{\fa,\chi_k}^0}\cong \cH^{l,l}(G(m,1,k))\otimes\cH^{l+1,l-1} (G(m,1,r-k)).
\eern
Moreover,
\beqn
W_{\fa,\chi_k}^{en}\cong  G(m,2,k)\times G(m,1,r-k),\,W_{\fa,\chi_k}/W_{\fa,\chi_k}^{en}\cong\bZ/2\bZ, \,k\geq 1.
\eeqn

\end{lemma}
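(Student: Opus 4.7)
The proof will run in parallel to that of Lemma~\ref{lemma-type B}, with the main differences arising from (a) the different coroot lattice (type $C_N$ versus $B_N$), which forces the split between the even and odd $l$ cases in the formulas for $\gamma_k$, and (b) the different endoscopic dual group ($\check G = SO(2N+1)$ rather than $\check G = PSp(2N)$), which produces the new subgroup $G(m,2,k) \subsetneq G(m,1,k)$ for $W_{\fa,\chi_k}^{en}$.

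For (i), I would compute $I = (\Ts)^\theta$ directly from the action of $w\vartheta$ on $X_*(\Ts)$: solve $\theta(t) = t$ in coroot coordinates, noting that the type $C$ short/long-root asymmetry at position $\alpha_n = 2\epsilon_n$ (contrast with $\alpha_n = \epsilon_n$ in type $B$) is what forces the split between the even and odd $l$ formulas for the generators $\gamma_k$. For (ii), one computes the action of the generators $s_i, \tau_r$ of $W_\fa$ on the $\gamma_k$'s, obtaining transformation rules analogous to those in the proof of Lemma~\ref{lemma-type B}; unlike in type $B$ the characters at the two extremes are on equal footing, so the orbits are indexed by $0 \le k \le r$ rather than up to $r/2$. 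Checking that the $\chi_k$ are pairwise non-conjugate and exhaust $\hat I/W_\fa$ is routine counting.

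For the stabilizer statements in (iii), the action of $s_{ij}^{(a)}$ on the $\gamma_k$ yields $s_{ij}^{(a)}.\chi_k = \chi_k \iff j \le k$ or $i \ge k+1$, and $\tau_i.\chi_k = \chi_k$ for all $i$; this gives
\beqn
W_{\fa,\chi_k}^0 \;=\; \langle s_1,\ldots,s_{k-1},\tau_k\rangle \times \langle s_{k+1},\ldots,s_{r-1},\tau_r\rangle \;\cong\; G(m,1,k) \times G(m,1,r-k)
\eeqn
and, since no non-trivial element of $W_\fa$ swaps the two blocks in a way that fixes $\chi_k$ (because the characters on the two blocks are different, unlike the symmetric case $k=r/2$ in types $A,B$), one obtains $W_{\fa,\chi_k} = W_{\fa,\chi_k}^0$. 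On the dual side, $\chi_k$ corresponds via~\eqref{hat I} to an element of the form $\on{diag}(-I_{kl}, I_{2(r-k)l+1}, -I_{kl}) \in \check G = SO(2rl+1)$; its centralizer is $\check G(\chi_k) \cong S(O(2kl) \times O(2(r-k)l+1))$ with identity component $SO(2kl) \times SO(2(r-k)l+1)$, on which $\check\theta$ acts by stable automorphisms. Reading off the little Weyl groups and using that $W(SO(2kl),\fa^*) \cong G(m,2,k)$ while $W(SO(2(r-k)l+1),\fa^*) \cong G(m,1,r-k)$, this yields $W_{\fa,\chi_k}^{en} \cong G(m,2,k) \times G(m,1,r-k)$, and the index 2 in $W_{\fa,\chi_k}$ accounts for the quotient $\bZ/2\bZ$.

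The Hecke algebra is then determined by the rank-one reductions of \S\ref{subsec-regular-splitting}. For the transposition-type reflections $s_{ij}^{(a)}$, the derived group of $G_s$ is a product of $SL(2)$'s permuted cyclically by $\theta$, with $I_s \cong \mu_2$ generated by a product of $\gamma_{a'}$'s, so~\eqref{poly-order 2} gives the quadratic relation $(\sigma-1)^2$ on the corresponding braid generator. For the end reflections $\tau_i$, the derived group $(G_{\tau_i})_{\on{der}} \cong Sp(2l)$ and $\theta|_{(G_{\tau_i})_{\on{der}}}$ is the Coxeter-order stable automorphism treated in \S\ref{ssec-cc}; one checks that $I_{\tau_i} = \langle\gamma_i\rangle$ (up to the even/odd $l$ adjustment already encoded in $\gamma_i$), and then $\chi_k|_{I_{\tau_i}}$ is trivial for $i > k$ and non-trivial for $i \le k$. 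By~\eqref{type C-trivial}–\eqref{type C-nontrivial} this produces polynomials $(x-1)^{l+1}(x+1)^{l-1}$ on the $G(m,1,r-k)$ factor and $(x^2-1)^l$ on the $G(m,1,k)$ factor, yielding $\cH^{l,l}(G(m,1,k)) \otimes \cH^{l+1,l-1}(G(m,1,r-k))$ as claimed. The main technical point to check carefully is the bookkeeping in case (i) for odd $l$, where the distinguished character $\gamma_r$ involves an odd number of coroot factors and one must verify that the even/odd-$l$ formulas produce the uniform description asserted in (iii).
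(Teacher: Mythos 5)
Your proposal is correct and follows essentially the same route as the paper's proof: compute $I=(\Ts)^\theta$ explicitly, work out the $W_\fa$-action on the generators $\gamma_k$ to identify orbit representatives and $W_{\fa,\chi_k}^0$, pass to the dual group $SO(2N+1)$ to read off $\check G(\chi_k)^0\cong SO(2kl)\times SO(2(r-k)l+1)$ and hence $W_{\fa,\chi_k}^{en}$, and feed the rank-one groups $(G_{\tau_i})_{\on{der}}\cong Sp(2l)$ and $I_{s_{ij}^{(a)}},I_{\tau_i}$ into the polynomials of~\eqref{type C-trivial}, \eqref{type C-nontrivial}, \eqref{poly-order 2}. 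The one place you gloss over but the paper spells out is the odd-$l$ bookkeeping for $I_{\tau_i}$: it is $\langle\gamma_i\rangle$ when $l$ is even but $\langle\prod_{a=i}^r\gamma_a\rangle$ when $l$ is odd, and one must verify that the resulting restriction condition $\chi_k|_{I_{\tau_i}}=1\iff i\ge k+1$ still holds in both parities before the uniform formula $\cH^{l,l}(G(m,1,k))\otimes\cH^{l+1,l-1}(G(m,1,r-k))$ can be asserted.
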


\begin{proof}

One checks that 
\begin{eqnarray*}
&&s_i:\gamma_{i-1}\mapsto\gamma_{i-1}\gamma_{i},\ \gamma_{i+1}\mapsto\gamma_{i}\gamma_{i+1},\ \gamma_k\mapsto\gamma_k,\ k\neq i-1,i+1,\ \text{ when $l$ is odd,}\\
&&s_i:\gamma_{i}\mapsto\gamma_{i+1},\ \gamma_{i+1}\mapsto\gamma_{i},\ \gamma_k\mapsto\gamma_k,\ k\neq i, i+1,\ \text{ when $l$ is even},\ 1\leq i\leq r-1;\\
&&\tau_r:\gamma_k\mapsto\gamma_k.
\end{eqnarray*}
It follows that
\beq\label{wachi-c}
W_{\fa,\chi_k}^0=\langle s_1,\ldots,s_{k-1},\tau_k\rangle\times\langle s_{k+1},\ldots,s_{r-1},\tau_r\rangle\cong G(m,1,k)\times G(m,1,r-k).
\eeq
From the endoscopic point of view,  $\chi_k$ corresponds to  
\beqn
\chi_k=\on{diag}(-I_{kl},I_{(r-k)l},1,I_{(r-k)l},-I_{kl})\in \check G=SO(2N+1),\ 0\leq k\leq r.
\eeqn
When $k\neq 0$
\beqn
\check G(\chi_k)^0\cong SO(2kl)\times SO(2(r-k)l+1)\text{ and }\check G(\chi_k)/\check G(\chi_k)^0\cong\bZ/2\bZ.
\eeqn
Moreover, $\check\theta$ restricts to an order $2l$ stable automorphism on each factor of $\check G(\chi_k)^0$. 
It follows that for $1\leq k\leq r$
\beqn
W_{\fa,\chi_k}^{en}=\langle s_1,\ldots,s_{k-1},s_{k-1,k}^{(1)},\tau_k^2\rangle\times\langle s_{k+1},\ldots,s_{r-1},\tau_r\rangle\cong G(m,2,k)\times G(m,1,r-k).
\eeqn
We have that $\theta$ restricts to an order $2l$ stable automorphism on $(G_{\tau_i})_{\on{der}}\cong Sp(2l)$ and 
\beqn
I_{s_{ij}^{(a)}}=\langle\gamma_i\gamma_{j}\rangle\text{ (resp. $\langle\prod_{k=i}^{j-1}\gamma_k\rangle$)}\cong\mu_2,\,\ I_{\tau_i}=\langle\gamma_i\rangle\text{ (resp. $\langle\prod_{k=i}^r\gamma_i\rangle$)}\cong\mu_2,\text{ if $l$ is even (resp. odd)}.
\eeqn
Thus
\beq\label{charsC}
\chi_k|_{I_{s_{ij}^{(a)}}}=1\Leftrightarrow j\leq k\text{ or }i\geq k+1\Leftrightarrow s_{ij}^{(a)}\in W_{\fa,\chi_k}^0,\ \ \chi_k|_{I_{\tau_i}}=1\Leftrightarrow i\geq k+1.
\eeq
 In view of~\eqref{charsC} the claim on $\cH_{W_{\fa,\chi_k}^0}$ follows from equations~\eqref{poly-order 2},~\eqref{wachi-c},~\eqref{type C-trivial} and~\eqref{type C-nontrivial}.
\end{proof}

\subsection{Type D}\label{ssec-pfD}

Assume that $G=Spin(2n)$. Let $\theta$ be a stable automorphism of order $m=2l$, where $l|n$ or $l|n-1$, see Table~\ref{table 1}. Let $r=n/l$ (resp. $r=(n-1)/l$).

Recall $W_n$ denote the Weyl group of type $B_n$  and the permutations $t_{ij}$, $t_a$ (see subsection~\ref{sec-typeB}). Let $W_n'\subset W_n$ be the subgroup of type $D_n$ consisting of permutations $\sigma$ such that $|\{i>0\mid\sigma(i)<0\}|$ is even.

\subsubsection{The case of $n=rl$}

Suppose that $n=rl$.  Let
\beqn
k_i=(i-1)l,\text{ and }\tilde\tau_i=t_{k_i+1,k_i+2}t_{k_i+2,k_i+3}\cdots t_{k_i+l-1,k_i+l}t_{k_i+l}\in W_n,\ 1\leq i\leq r.
\eeqn
 We can assume that (see~\cite[section 7]{RLYG}) $$\theta|_{\Lt=\on{Lie}(\Ts)}=w_0=\tilde\tau_1\cdots \tilde\tau_r.$$
Note that $w_0\in W_n'$ if and only if $r$ is even.  The little Weyl group $$W_\fa=\langle s_1,\ldots,s_{r-1},s_{r-1}',\tau_r\rangle\cong G(m,2,r)$$
 where 
\beq\label{type D sj}
s_{j}=\prod_{a=1}^lt_{k_j+a,k_{j+1}+a}\,,\ 1\leq j\leq r-1,\ s_{r-1}'=\tilde{\tau}_{r-1}^{-1}s_{r-1}\tilde{\tau}_{r-1}
,\ \ \tau_r=\tilde\tau_r^2.
\eeq

\begin{lemma}\label{lemma-type D-1}
{\rm (i)} We have
$$I=\langle\gamma_i, i=1,\ldots,r\rangle\cong \mu_2^{r}$$
where
\beqn
\bega
\gamma_k=\prod_{j=1}^{l}\check\alpha_{(k-1)l+2j-1}(-1),\ 1\leq k\leq r-1,\ 
\gamma_r=\check\alpha_{rl-1}(-1)\check\alpha_{rl}(-1).
\eega
\eeqn
{\rm (ii)} A set of representatives of $W_\fa$-orbits in $\hat I$ is
\bern
&&\{\chi_k,\ 0\leq k\leq \frac{r-1}{2}, \chi_r\},\text{ if $r$ is odd};\ \ \{\chi_k,\ 0\leq k\leq \frac{r}{2},\ \chi_{r},\ \chi_{r+1}\},\text{ if $r$ is even},
\eern
 defined by
\begin{eqnarray*}
&&\chi_k:\gamma_k\mapsto-1,\,\gamma_i\mapsto 1,\,i\neq k,\,0\leq k\leq[\frac{r-1}{2}]\text{ or }k=r;\\
&&\chi_{r+1}:\gamma_{r-1}\mapsto-1,\,\gamma_{r}\mapsto-1,\,\gamma_i\mapsto 1,\,i\neq r-1,r,\text{ if $r$ is even}.
\end{eqnarray*}
{\rm (iii)} We have
\bern
&&W_{\fa,\chi_k}^0\cong G(m,2,k)\times G(m,2,r-k),\\&&\cH_{W_{\fa,\chi_k}^0}\cong \cH^{l}(G(m,2,k))\times \cH^{l}(G(m,2,r-k)),\ 0\leq k\leq\frac{r}{2};\\
&&W_{\fa,\chi_r}^0\cong G(l,1,r),\ W_{\fa,\chi_{r+1}}^0\cong G(l,1,r)\text{ (when $r$ is even)},\\
&&\cH_{W_{\fa,\chi_r}^0}\cong\cH_{W_{\fa,\chi_{r+1}}^0}\cong
\cH^{[\frac{l+1}{2}],[\frac{l}{2}]} G(l,1,r).
\eern
Moreover
\bern
&&W_{\fa,\chi_k}^{en}=W_{\fa,\chi_k}^0\text{ if $k\neq r,r+1$ or $l$ is odd},\\
&& W_{\fa,\chi_k}^{0}/W_{\fa,\chi_k}^{en}\cong\bZ/2\bZ\text{ if $k= r,r+1$ and $l$ is even};\\
&&W_{\fa,\chi_k}/W_{\fa,\chi_k}^0\cong\bZ/2\bZ,\ 1\leq k<\frac{r}{2},\ \ W_{\fa,\chi_{r/2}}/W_{\fa,\chi_{r/2}}^0\cong\bZ/2\bZ\times\bZ/2\bZ,\\&&W_{\fa,\chi_r}\cong W_{\fa,\chi_r}^0\text{ if $r$ is odd};\ W_{\fa,\chi_k}/W_{\fa,\chi_k}^0\cong\bZ/2\bZ, \,k=r,r+1,\text{ if $r$ is even}.
\eern

\end{lemma}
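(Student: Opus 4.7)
The plan is to follow the same overall strategy as in Lemmas~\ref{type A-lemma-1}--\ref{lemma-type C}, but adapted to the group $G(m,2,r)$ with the additional complication of the extra generator $s_{r-1}'$ and the characters $\chi_r,\chi_{r+1}$ that live over the fork at the end of the $D_n$ Dynkin diagram.

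First, for part (i), I would verify directly that each proposed $\gamma_k$ lies in $I=(\Ts)^\theta$ by checking $w_0(\gamma_k)=\gamma_k$, where $w_0=\tilde\tau_1\cdots\tilde\tau_r$. The key point is that each $\tilde\tau_i$ acts on the coordinates within the $i$-th block as a twisted Coxeter element and trivially on the other blocks, so the computation for $\gamma_k$ with $1\le k\le r-1$ reduces to a single-block verification analogous to the one implicitly performed in \S\ref{ssec-cox-D}. A separate check handles $\gamma_r$, which sits at the forked end. The $\bZ/2\bZ$-independence and the count $|I|=2^r$ then follow by computing the $w_0$-fixed subspace of $X_*(\Ts)\otimes\bF_2$ (of dimension exactly $r$).

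Second, for part (ii), I would record how each of $s_1,\dots,s_{r-1},s_{r-1}',\tau_r$ acts on $\gamma_1,\dots,\gamma_r$. The generators $s_j$, $j\le r-2$, act by swapping $\gamma_j$ with $\gamma_{j+1}$ (possibly modulated by $\gamma_r$ as in the type $B$ analysis), while $s_{r-1}$ and $s_{r-1}'$ and $\tau_r$ produce, up to $\gamma_r$, the extra symmetry that distinguishes $G(m,2,r)$ from $G(m,1,r)$. Reading off the orbits of $W_\fa$ on $\hat I$ then yields the representatives $\{\chi_k\}$ as listed; the splitting between $\chi_r$ and $\chi_{r+1}$ in the case of even $r$ reflects the fact that the diagram automorphism exchanging the last two nodes fails to lie in $W_\fa$ in that case.

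Third, for the stabilizers in part (iii), I would apply formulas~\eqref{eqn-reflections}--\eqref{eqn-reflections-2} to decide for each distinguished reflection $s$ whether $s\cdot\chi_k=\chi_k$, proceeding exactly as in the proofs of Lemmas~\ref{lemma-type B} and~\ref{lemma-type C}. For $0\le k\le r/2$ this yields the product decomposition $G(m,2,k)\times G(m,2,r-k)$; for $k\in\{r,r+1\}$ only the squares $\tau_i^2$ and the even-twist reflections survive, giving $G(l,1,r)$. Extra outer elements $\sigma_0$ (reversing blocks) account for the quotients $\bZ/2\bZ$ and $\bZ/2\bZ\times\bZ/2\bZ$ described in the lemma. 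The endoscopic description is obtained by translating $\chi_k$ into an explicit diagonal element of $\check G=PSO(2n)$ via~\eqref{hat I}, and then computing $\check G(\chi_k)^0$: this is of type $D$ for $0\le k\le r/2$ and of type $A$ for $k\in\{r,r+1\}$ with $l$ even, with $\check\theta$ restricting to a stable automorphism whose type matches one of the cases in \S\ref{sec-stable rank 1 explicit}; comparing little Weyl groups then yields $W_{\fa,\chi_k}^{en}$ and in particular the index-$2$ discrepancy with $W_{\fa,\chi_k}^0$ when $l$ is even and $k=r,r+1$. Finally, the Hecke algebras are assembled by determining, for each distinguished reflection $s\in W_{\fa,\chi_k}^0$, the local data $I_s\subset I$ and $\chi_k|_{I_s}$: for the transposition-type reflections $s_{ij}^{(a)}$ the resulting polynomial is given by~\eqref{poly-order 2}, while for the $\tau_i$-type (and $\tau_i^2$-type) reflections $(G_{\tau_i})_{\mathrm{der}}\cong\mathrm{Spin}(2l)$ and one feeds in the appropriate Coxeter/twisted Coxeter polynomial from \S\ref{ssec-cox-D} and \S\ref{ssec-tcd}.

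The main obstacle is the careful bookkeeping around the fork of the Dynkin diagram, which has two parallel manifestations: the extra generator $\gamma_r$ in $I$ and the extra generator $s_{r-1}'$ in $W_\fa$. These conspire to produce the two distinct characters $\chi_r,\chi_{r+1}$ when $r$ is even and to force the distinction between $W_{\fa,\chi_k}^0$ and $W_{\fa,\chi_k}^{en}$ in the endoscopic picture. Getting the $\bZ/2\bZ$-indices consistent between the direct computation of stabilizers, the endoscopic computation, and the local rank-one monodromy polynomials (in particular verifying the expectations~\eqref{min-mono} and~\eqref{mono-2}) is where most of the work will lie.
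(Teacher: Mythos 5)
Your plan follows essentially the same route as the paper's proof: compute the action of the distinguished reflections $s_{ij}^{(k)}$ and $\tau_i$ on the generators $\gamma_1,\dots,\gamma_r$, deduce the stabilizers $W_{\fa,\chi_k}^0$ and, by adjoining the explicit "block-reversing/sign-changing" elements, the full stabilizers $W_{\fa,\chi_k}$; translate each $\chi_k$ into a diagonal element of $PSO(2n)$ to compute $\check G(\chi_k)^0$ and hence $W_{\fa,\chi_k}^{en}$; and assemble the Hecke algebras from the local data $I_s$ and $\chi_k|_{I_s}$, feeding in equation~\eqref{poly-order 2} for the $s_{ij}^{(a)}$-type reflections and the rank-one monodromy polynomials for $\tau_i$.

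One small correction: for this lemma, where $n=rl$, the rank-one reduction at a $\tau_i$-type reflection has $(G_{\tau_i})_{\mathrm{der}}\cong\mathrm{Spin}(2l)$ with $\theta$ restricting to an order-$2l$ \emph{outer} automorphism, i.e.\ a twisted Coxeter case of type $D_l^2$; thus only the polynomials of \S\ref{ssec-tcd}, equations~\eqref{type D2-trivial} and~\eqref{type D2-nontrivial}, are needed. The inner Coxeter polynomials of \S\ref{ssec-cox-D} enter instead in Lemma~\ref{lemma-type D-2}, where $n=rl+1$ and $(G_{\tau_i})_{\mathrm{der}}\cong\mathrm{Spin}(2l+2)$. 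Also, for $1\le k<r/2$ the $\bZ/2\bZ$ quotient $W_{\fa,\chi_k}/W_{\fa,\chi_k}^0$ is produced not by the block-reversal $\sigma_0$ but by the element $\sigma_k=\tilde\tau_k\tilde\tau_{k+1}$, which lies in $W_n'$ precisely because it is a product of two sign changes; $\sigma_0$ only enters for $k=r/2$. These are details your plan would surface once the bookkeeping is carried out, and they do not affect the soundness of the approach.
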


\begin{proof}

We have
\begin{eqnarray*}
&&  s_{ij}^{(k)}:\gamma_{i-1}\mapsto\prod_{a=i-1}^{j-1}\gamma_{a}\text{ (resp. $\gamma_r\prod_{a=i-1}^{j-1}\gamma_{a}$)},\ \gamma_{i}\mapsto\prod_{a=i+1}^{j-1}\gamma_{a} \text{ (resp. $\gamma_r\prod_{a=i+1}^{j-1}\gamma_{a})$ (if $j\geq i+2$)},\\&& \gamma_{j-1}\mapsto\prod_{a=i}^{j-2}\gamma_{a} \text{ (resp. $\gamma_r\prod_{a=i}^{j-2}\gamma_{a}$) (if $j\geq i+2$)},\  \gamma_{j}\mapsto\begin{cases}\prod_{a=i}^{j}\gamma_{a}\text{ (resp. $\gamma_r\prod_{a=i}^{j}\gamma_{a}$)}&\text{ if $j\neq r$}\\\gamma_j&\text{ if $j=r$}\end{cases},\\&&\gamma_{a}\mapsto\gamma_a,\,a\neq i-1,i,j-1,j,\,\text{ if $k$ is even (resp. odd)};\\
&&  \tau_i:\gamma_a\mapsto\gamma_a\text{ for all }a.
\end{eqnarray*}
 It follows that
\begin{eqnarray*}
W_{\fa,\chi_k}^0&=&\langle s_{ij}^{(a)},\,i\geq k+1\text{ or }j\leq k,\ \tau_i\rangle=\langle s_1,\ldots,s_{k-1},s_{k-1,k}^{(1)},\tau_k\rangle\times\langle s_{k+1},\ldots, s_{r-1,r}^{(1)},\tau_r\rangle\\
&\cong& G(m,2,k)\times G(m,2,r-k),\ 0\leq k\leq \frac{r}{2};
\\
W_{\fa,\chi_r}^0&=&\langle s_{ij}^{(a)},\text{ $a$ even},\ \tau_i\rangle= \langle s_1,\ldots, s_{r-1},\tau_r\rangle\cong G(l,1,r);\\
W_{\fa,\chi_{r+1}}^0&=&\langle s_{ij}^{(a)},\text{ $j\leq r-1$ and $a$ even},\, s_{ir}^{(b)},\text{  $b$ odd},\,\tau_i\rangle= \langle s_1,\ldots,s_{r-2}, s_{r-1,r}^{(1)},\tau_r\rangle\cong G(l,1,r).
\end{eqnarray*}
Let $\sigma_0=\prod_{j=1}^{r/2}s_{j,r+1-j}^{(0)}$ and  $\sigma_k=\tilde\tau_k\tilde\tau_{k+1}\in W_n'$, $1\leq k\leq r/2$. Then one checks that 
\beqn
W_{\fa,\chi_k}=\langle\sigma_k\rangle\ltimes W_{\fa,\chi_k}^0,\,1\leq k<r/2,\,W_{\fa,\chi_{r/2}}=\langle\sigma_{r/2},\sigma_0\rangle\ltimes W_{\fa,\chi_{r/2}}^0.
\eeqn
Suppose $r$ is even. Let $\sigma_0': \epsilon_{jl+i}\mapsto \epsilon_{(r-1-j)l+i+1},\ \epsilon_{jl+l}\mapsto -\epsilon_{(r-1-j)l+1},\ 0\leq j\leq r-1, 1\leq i\leq l-1$, and $\sigma_0''=\tilde\tau_r^{-1}\sigma_0'\tilde\tau_r$. Then $W_{\fa,\chi_r}=\langle\sigma_0'\rangle\ltimes W_{\fa,\chi_r}^0$ and $W_{\fa,\chi_{r+1}}=\langle\sigma_0''\rangle\ltimes W_{\fa,\chi_{r+1}}^0$.

From the endoscopic point of view, $\chi_k$ corresponds to  
\bern
&&\chi_k=\on{diag}(-I_{kl},I_{(r-k)l},-I_{(r-k)l},I_{kl})\in PSO(2n),\ 0\leq k\leq r/2\\
&&\chi_{r}=\on{diag}(\underbrace{J_l,\ldots,J_l}_{r\text{ terms}},-J_l,\ldots,- J_l)\in PSO(2n),\\
&&\chi_{r+1}=\on{diag}(\underbrace{J_l,\ldots,J_l}_{r-1\text{ terms}},-J_l,J_l,-J_l\ldots,- J_l)\in PSO(2n),\ \text{if $r$ is even}
\eern
where  $J_l$ is defined in~\eqref{the matrix Jk}. Thus 
\bern
&&\check G(\chi_k)^0\cong\begin{cases}P\big(SO(2kl)\times SO(2(r-k)l)\big)&\ 0\leq k\leq r/2
\\
P(SO\big(rl)\times SO(rl)\big)&\ k=r, r+1,\, l\text{ even}\\
P(GL(rl)\times GL(rl)^*)&\ k=r, r+1,\,l\text{ odd},\end{cases}\\
&&\check G(\chi_k)/\check G(\chi_k)^0\cong\begin{cases}1&k=0, \text{ or }k=r\text{ and $l, r$ both odd}\\\bZ/2\bZ&1\leq k<r/2,\text{ or }k=r,r+1,\text{$l$ odd, $r$ even}\\\bZ/2\bZ\times\bZ/2\bZ&k=r/2,\text{ or }k=r,r+1\text{ and $l$ is even}.\end{cases}
\eern
When $0\leq k\leq r/2$, $\check\theta$ restricts to an order $m$ stable automorphism of each factor of ${\check G(\chi_k)^0}$. Thus $W_{\fa,\chi_k}^{en}\cong G(m,2,r/2)\times G(m,2,r/2)$.

\noindent When $l$ is even and $k=r$ or $r+1$,  $\check\theta|_{\check G(\chi_k)^0}$ can be identified with an order $l$  stable automorphism of type $D_{rl/2}$ (if $r$ is even) or $D_{rl/2}^2$ (if $r$ is odd). Thus $W_{\fa,\chi_k}^{en}\cong G(l,2,r)$.

\noindent When $l$ is odd,  and $k=r,r+1$,  $\check\theta|_{\check G(\chi_k)^0}$ is an order $2l$ stable (outer) automorphism of  type $A_{rl-1}^2$. Thus $W_{\fa,\chi}^{en}\cong G(l,1,r)$.

We have $\theta$ restricts to an order $2l$ stable (outer) automorphism on $(G_{\tau_i})_{\on{der}}\cong Spin(2l)$ and that 
 \beqn
 I_{s_{ij}^{(k)}}=\langle\prod_{a=i}^{j-1}\gamma_a\rangle\text{ (resp. $\langle\gamma_r\prod_{a=i}^{j-1}\gamma_a\rangle$)}\cong\mu_2\text{ if $k$ is even (resp. odd)},\ \ I_{\tau_i}=\langle\gamma_r\rangle\cong\mu_2.
 \eeqn
Moreover
\begin{subequations}
\beq\label{charsD1}
\chi_k|_{I_{s_{ij}^{(a)}}}=1\Leftrightarrow i\geq k+1\text{ or }j\leq k\Leftrightarrow s_{ij}^{(a)}\in W_{\fa,\chi_k}^0,\ \ \chi_k|_{I_{\tau_i}}=1,\text{ $0\leq k\leq[\frac{r-1}{2}]$};
\eeq
\beq\label{charsD2}
\bega
\chi_r|_{I_{s_{ij}^{(a)}}}=1\Leftrightarrow a\text{ even}\Leftrightarrow s_{ij}^{(a)}\in W_{\fa,\chi_r}^0,\ \ \chi_r|_{I_{\tau_i}}\neq 1;\\
\chi_{r+1}|_{I_{s_{ij}^{(a)}}}=1\Leftrightarrow j\leq r-1 \text{ and $a$ even, or $j=r$ and $a$ odd}\Leftrightarrow s_{ij}^{(a)}\in W_{\fa,\chi_{r+1}}^0,\\
\chi_{r+1}|_{I_{\tau_i}}\neq 1,\text{ when $r$ is even}.
\eega
\eeq
\end{subequations}
  In view of~\eqref{charsD1}, the claim on $\cH_{W_{\fa,\chi_k}^0}$, $0\leq k\leq[\frac{r-1}{2}]$, follows from equations~\eqref{poly-order 2} and~\eqref{type D2-trivial}.
In view of~\eqref{charsD2}, the claim on $\cH_{W_{\fa,\chi_r}^0}$ and $\cH_{W_{\fa,\chi_{r+1}}^0}$ follows from equations~\eqref{poly-order 2} and~\eqref{type D2-nontrivial}.
\end{proof}

\subsubsection{The case of $n=rl+1$}Suppose that $n=rl+1$.  For $1\leq i\leq r$, let
\beqn\label{taui}
\tau_i=t_{k_i+1,k_i+2}t_{k_i+2,k_i+3}\cdots t_{k_i+l-1,k_i+l}t_{k_i+l}t_{rl+1}\in W_n',
\eeqn
where $k_i=(i-1)l$. We can assume that (see~\cite[section 7]{RLYG}) $$\theta|_{\Lt=\on{Lie}(\Ts)}=w_0=\tau_1\cdots \tau_r\text{ (resp. $\tau_1\cdots \tau_rt_{rl+1}$)}\text{ if $r$ is odd (resp. even)}.$$
Note that $w_0\in W_n'$ if and only if $r$ is odd. The little Weyl group $$W_\fa=\langle s_1,\ldots,s_{r-1},\tau_r\rangle\cong G(m,1,r)$$ where $s_k,\,1\leq k\leq r-1$, is the same as in~\eqref{type D sj}.

\begin{lemma}\label{lemma-type D-2}
{\rm (i)} We have
\begin{eqnarray*}
&&I=\langle\gamma_i,\, i=1,\ldots,r\rangle\cong\mu_4\times\mu_2^{r-1}\text{ if $l$ is even}\\
&&I=\langle\gamma_i, i=1,\ldots,r+1\rangle\cong \mu_2^{r+1}\text{ if $l$ is odd},
\end{eqnarray*}
where
\begin{eqnarray*}
&&\gamma_k=\prod_{j=1}^{l}\check\alpha_{(k-1)l+2j-1}(-1),\ 1\leq k\leq r-1,\\
&&\gamma_r=\check\alpha_{rl}(\mathbf{i})\check\alpha_{rl+1}(-\mathbf{i})\prod_{j=1}^{\frac{l}{2}}\check\alpha_{(r-1)l+2j-1}(-1),\text{ if $l$ is even},\\
&&\gamma_r=\prod_{j=1}^{\frac{l+1}{2}}\check\alpha_{(r-1)l+2j-1}(-1),\ \ \gamma_{r+1}=\check\alpha_{rl}(-1)\check\alpha_{rl+1}(-1),\text{ if $l$ is odd}.
\end{eqnarray*}
{\rm (ii)} A set of representatives of $W_\fa$-orbits in $\hat I$ is 
$$\{\chi_k,\ 0\leq k\leq r+1\}\text{ if $r$ is even},\ \ \{\chi_k,0\leq k\leq r+2\}\text{ if $r$ is odd}$$
defined by
\begin{eqnarray*}
&&\chi_k:\gamma_k\mapsto-1,\ \gamma_i\mapsto1,\ i\neq k,\ 0\leq k\leq r,\\
&& \chi_{r+1}:\gamma_r\mapsto\mathbf{i},\,\gamma_i\mapsto1,\,i\neq r,\,\chi_{r+2}:\gamma_r\mapsto-\mathbf{i},\,\gamma_i\mapsto 1,\,i\neq r, \text{ if $l$ is even};\\
&&\chi_{r+1}:\gamma_{r+1}\mapsto-1,\ \gamma_i\mapsto1,\ i\neq r+1, \\&&  \chi_{r+2}:\gamma_r\mapsto-1,\ \gamma_{r+1}\mapsto-1,\ \gamma_i\mapsto1,\ i\neq r,r+1,\,\text{  if $l$ is odd}.
\end{eqnarray*}
{\rm (iii)} We have
\bern
&&W_{\fa,\chi_k}^0\cong G(m,1,k)\times G(m,1,r-k),\\
&&\cH_{W_{\fa,\chi_k}^0}\cong \cH^{l,l}(G(m,1,k))\otimes\cH^{l+2,l-2} (G(m,1,r-k)),\ 0\leq k\leq r;
\\
&&W_{\fa,\chi_{r+1}}^0\cong W_{\fa,\chi_{r+2}}^0\cong G(l,1,r)\\
&&\cH_{W_{\fa,\chi_{r+1}}^0}\cong\cH_{W_{\fa,\chi_{r+2}}^0}\cong\cH^{\frac{l}{2},\frac{l}{2}}{(G(l,1,r))}\text{ (resp. $\cH^{\frac{l+3}{2},\frac{l-3}{2}} G(l,1,r)$)}\\&&\hspace{2in}\text{ if $l$ is even (resp. odd)}.
\eern
Moreover
\bern
&&W_{\fa,\chi_k}^{en}=W_{\fa,\chi_k}^0\text{ if $k=0$, or $k=r+1,r+2$, $l$ odd},\\
&&W_{\fa,\chi_k}^0/W_{\fa,\chi_k}^{en}\cong\bZ/2\bZ\text{ if $1\leq k\leq r$, or if $k=r+1,r+2$, $l$ even};\\
&&W_{\fa,\chi_k}=W_{\fa,\chi_k}^0,\ 0\leq k\leq r\text{ or $k=r+1$ and $r$ even};\\&&
 W_{\fa,\chi_{k}}/ W_{\fa,\chi_{k}}^0\cong\bZ/2\bZ,\,k=r+1,r+2,\text{ and $r$ is odd}.
\eern

\end{lemma}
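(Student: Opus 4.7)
The plan is to follow the strategy used for Lemma~\ref{lemma-type D-1}, adapting the bookkeeping to the case $n=rl+1$. The argument has three components: (1) explicit description of $I=(\Ts)^\theta$ together with the action of the generators of $W_\fa\cong G(m,1,r)$ on $\hat I$; (2) identification of the stabilisers $W_{\fa,\chi_k}^0$ and $W_{\fa,\chi_k}$ as subgroups generated by distinguished reflections (see~\eqref{eqn-reflections}); and (3) the rank-one reduction for the Hecke algebra and the endoscopic subgroup $W_{\fa,\chi_k}^{en}$ via the polynomials $R_{\chi,s}$ from Section~\ref{sec-stable rank 1 explicit}.

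For part (i) I would verify directly that each $\gamma_i$ listed is fixed by $\theta|_{\Ts}=\tau_1\cdots\tau_r$ (resp.\ $\tau_1\cdots\tau_r t_{rl+1}$) by computing $\tau_j(\check\alpha_a)$ on the simple coroots in each $\tau_j$-block and then matching orders with $|(\Ts)^\theta|$. The novelty compared to Lemma~\ref{lemma-type D-1} is the extra pair of simple roots $\alpha_{rl},\alpha_{rl+1}$ attached to the $(rl+1)$-st coordinate: when $l$ is even, the combination $\check\alpha_{rl}(\mathbf{i})\check\alpha_{rl+1}(-\mathbf{i})$ produces a $\gamma_r$ of order $4$, yielding the $\mu_4\times\mu_2^{r-1}$ structure; when $l$ is odd, the same coordinates contribute an additional independent order-$2$ generator $\gamma_{r+1}$, yielding the $\mu_2^{r+1}$ structure. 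For parts (ii) and (iii), I would compute the action of $s_1,\ldots,s_{r-1},\tau_r$ (and hence of all $s_{ij}^{(a)}$ and $\tau_i$, see~\eqref{eqn-reflections-2}) on each $\gamma_k$ exactly as in Lemma~\ref{lemma-type D-1}: the $s_i$ permute and twist the $\gamma_k$ with $1\leq k\leq r-1$ as in the type-$A$ and type-$B$ calculations, while the interaction with $\gamma_r$ (and $\gamma_{r+1}$ when $l$ is odd) is governed by the extra $t_{rl+1}$-factor. A direct enumeration then shows that every $W_\fa$-orbit in $\hat I$ meets exactly one of the listed $\chi_k$. Moreover, the distinguished reflections fixing $\chi_k$ generate precisely the subgroups $G(m,1,k)\times G(m,1,r-k)$ for $0\leq k\leq r$, and $G(l,1,r)$ for $k=r+1,r+2$, where the latter arises from restricting to those $s_{ij}^{(a)}$ with $a$ of the appropriate parity so that $\tilde\tau_i$ is replaced by $\tau_i^2$ of order $l$; the outer elements giving $W_{\fa,\chi_k}/W_{\fa,\chi_k}^0\cong\bZ/2\bZ$ for $k=r+1,r+2$ with $r$ odd come from elements $\sigma_0'$ analogous to those constructed in the proof of Lemma~\ref{lemma-type D-1}.

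For each distinguished reflection $s$ in $W_{\fa,\chi_k}^0$, I would compute $I_s$ and the restriction $\chi_k|_{I_s}$ and then read off $R_{\chi,s}$ from Section~\ref{sec-stable rank 1 explicit}. The order-two reflections $s_{ij}^{(a)}$ contribute via~\eqref{poly-order 2}. The $\tau_i$ reduce to a rank-one Coxeter situation with $(G_{\tau_i})_{\on{der}}\cong Spin(2l+2)$ and $\theta|_{(G_{\tau_i})_{\on{der}}}$ the order-$2l$ Coxeter stable automorphism of $D_{l+1}$ (this is the rank-one case in Table~\ref{table 1} with $l\mid (n'-1)$, $n'=l+1$, $r'=1$), so the polynomials~\eqref{type D-odd-trivial}--\eqref{type D-odd-order 2} (when $l$ is even) or~\eqref{typeD-even-1}--\eqref{typeD-even-3} (when $l$ is odd) apply. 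Matching these polynomials to the parities of $\chi_k|_{I_{\tau_i}}$ across the two factors produces exactly the claimed Hecke algebras $\cH^{l,l}\otimes\cH^{l+2,l-2}$ for $k\leq r$ and $\cH^{l/2,l/2}$ or $\cH^{(l+3)/2,(l-3)/2}$ for $k=r+1,r+2$.

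The endoscopic part identifies $\chi_k$ via~\eqref{hat I} with an explicit semisimple element of $\check G=PSO(2n)$: for $0\leq k\leq r$ a block-diagonal sign matrix of the form $\on{diag}(-I_{kl},I_{(r-k)l+1},I_{(r-k)l+1},-I_{kl})$, and for $k=r+1,r+2$ a block-matrix built from the $J_l$ of~\eqref{the matrix Jk} together with $\pm\mathbf{i}$-eigenvalue blocks (when $l$ is even) coming from the $\mu_4$-factor of $\hat I$. Computing $\check G(\chi_k)^0$ and the restriction $\check\theta|_{\check G(\chi_k)^0}$ then gives $W_{\fa,\chi_k}^{en}$ as the little Weyl group of the appropriate smaller stable grading already handled in Section~\ref{sec-stable rank 1 explicit}. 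The main obstacle is the delicate parity analysis for $\chi_{r+1},\chi_{r+2}$: in the $l$-even case the $\mu_4$-factor of $I$ forces $\pm\mathbf{i}$-eigenvalue blocks on the dual side, so that $\check G(\chi_k)^0$ becomes a product of $D$-type factors cyclically permuted by $\check\theta$, and one must verify that the resulting stable restriction is genuinely of the type that yields $W_{\fa,\chi_k}^{en}\cong G(l,2,r)$ as a proper subgroup of $W_{\fa,\chi_k}^0\cong G(l,1,r)$ (accounting for the $\bZ/2\bZ$ quotient), whereas in the $l$-odd case the endoscopic piece is of type $A_{rl-1}^2$ with $W_{\fa,\chi_k}^{en}=W_{\fa,\chi_k}^0$.
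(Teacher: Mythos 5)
Your proposal follows essentially the same strategy as the paper's proof: compute the action of the distinguished reflections on the generators $\gamma_i$ of $\hat I$, identify $W_{\fa,\chi_k}^0$ and the outer elements giving $W_{\fa,\chi_k}/W_{\fa,\chi_k}^0$, reduce to semisimple rank one via $(G_{\tau_i})_{\on{der}}\cong Spin(2l+2)$ (with $R_{\chi,s}$ read off from the $D_{l+1}$ Coxeter formulas, the parity of $l$ controlling whether $n'=l+1$ is odd or even) and via~\eqref{poly-order 2} for the $s_{ij}^{(a)}$, and finally compute $W_{\fa,\chi_k}^{en}$ from $\check G(\chi_k)^0$ and $\check\theta|_{\check G(\chi_k)^0}$.

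One small slip worth flagging: for $k=r+1,r+2$ with $l$ odd you write that the endoscopic piece is of type $A_{rl-1}^2$. That is the identification occurring in Lemma~\ref{lemma-type D-1}, where $n=rl$ and $\check G(\chi_k)^0\cong P(GL(rl)\times GL(rl)^*)$. Here $n=rl+1$, so $\check G(\chi_k)^0\cong P\big(GL(rl+1)\times GL(rl+1)^*\big)$ and the type is $A_{rl}^2$. This is a bookkeeping slip only; it does not change your conclusion $W_{\fa,\chi_k}^{en}\cong G(l,1,r)=W_{\fa,\chi_k}^0$, which matches the paper.
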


\begin{proof}
Let us write
\beqn
\gamma=\gamma_r^2\text{ (resp. $\gamma_{r+1}$)}\text{ if $l$ is even (resp. odd)}=\check\alpha_{rl}(-1)\check\alpha_{rl+1}(-1).
\eeqn
We have 
\bern
&& s_{ij}^{(a)}(\gamma_k)=\gamma_k\text{ if $i,j\neq k,k+1$},\ s_{ik}^{(a)}(\gamma_k)=\gamma_i\cdots\gamma_k\text{ (resp. $\gamma_i\cdots\gamma_k\gamma$)},\\&&\,s_{k,k+1}^{(a)}(\gamma_k)=\gamma_{k},\ s_{i,k+1}^{(a)}(\gamma_k)=\gamma_i\cdots\gamma_{k-1}\text{ (resp. $\gamma_i\cdots\gamma_{k-1}$)},\text{ if $i\leq k-1$},\\&& s_{kj}^{(a)}(\gamma_k)=\gamma_{k+1}\cdots\gamma_{j-1}\text{ (resp. $\gamma_{k+1}\cdots\gamma_{j-1}\gamma$)},\text{ if $j\geq k+2$},\\&& s_{k+1,j}^{(a)}(\gamma_k)=\gamma_k\cdots\gamma_{j-1}\text{ (resp. $\gamma_k\cdots\gamma_{j-1}\gamma$)},\text{ when $a$ is even (resp. odd)};
\\
&&\tau_i({\gamma_k})=\begin{cases}\gamma_k&\text{if }\,k\notin\{ i-1,i,r\}, \text{ $l$ even},  \text{ or }\,k\notin\{ i-1,i\}, \text{ $l$ odd}\\ \gamma_k\gamma&\text{if }\,k\in\{ i-1,i,r\}, \text{ $l$ even},  \text{ or }\,k\in\{ i-1,i\}, \text{ $l$ odd}\end{cases},\,\,i\leq r-1,
\\&&\tau_r({\gamma_k})=\gamma_k\text{ (resp. $\gamma_k\gamma$)},\text{ if $k\neq r-1$ (resp. $k=r-1$)}.
\eern
Thus 
\bern
&&s_{ij}^{(a)}.\chi_k=\chi_k\Leftrightarrow j\leq k\text{ or }i\geq k+1,\ \ \tau_i.\chi_k=\chi_k,\text{ for $0\leq k\leq r$};
\\
&&s_{ij}^{(a)}.\chi_k=\chi_k\Leftrightarrow \text{$a$ is even},\ \tau_i^a.\chi_k=\chi_k\Leftrightarrow \text{$a$ is even},\text{ for $k=r+1,r+2$}.
\eern
It follows that
\ber
&&W_{\fa,\chi_k}^0=\langle s_1,\ldots,s_{k-1},\tau_k\rangle\times\langle s_{k+1},\ldots,s_{r-1},\tau_r\rangle\label{wachi-d12}\\
&&\qquad\quad\cong G(m,1,k)\times G(m,1,r-k),\ 0\leq k\leq r\nonumber;
\\
&&W_{\fa,\chi_{r+1}}^0=\langle s_1,\ldots,s_{r-1},\tau_{r}^2\rangle\cong G(l,1,r)\label{wachi-D11}
\\
&&W_{\fa,\chi_{r+2}}^0=\langle s_1,\ldots,s_{r-1},\tau_{r}^2\rangle\cong G(l,1,r)\text{ if $r$ is odd}\label{wachi-D13}.
\eer
We have $W_{\fa,\chi_k}=\langle\sigma_0'\rangle\ltimes W_{\fa,\chi_{k}}^0$, $k=r+1,r+2$, where
$$\sigma_0': \epsilon_{jl+i}\mapsto \epsilon_{(r-1-j)l+i+1},\ \epsilon_{jl+l}\mapsto -\epsilon_{(r-1-j)l+1},\ 0\leq j\leq r-1, 1\leq i\leq l-1,\epsilon_{rl+1}\mapsto-\epsilon_{rl+1}.$$

From the endoscopic point of view, the $\chi_k$ corresponds to 
\bern
&&\chi_k=\on{diag}(-I_{kl},I_{(r-k)l},1,1,I_{(r-k)l},-I_{kl})\in\check G=PSO(2n),\ 0\leq k\leq r\\
&&\chi_{r+1}=\on{diag}(\underbrace{J_l,\ldots,J_l}_{r\text{ terms}},\mathbf{i},-\mathbf{i},\underbrace{-J_l,\ldots,-J_l}_{r\text{ terms}})\in\check G=PSO(2n),\\
&&\chi_{r+2}=\on{diag}(\underbrace{J_l,\ldots,J_l}_{r\text{ terms}},-\mathbf{i},\mathbf{i},\underbrace{-J_l,\ldots,-J_l}_{r\text{ terms}})\in\check G=PSO(2n),\ \text{ when $r$ is odd}
\eern
where $J_l$ is defined in~\eqref{the matrix Jk}. 
Thus 
\bern
&&\check G(\chi_k)^0\cong\begin{cases}P\big(SO(2kl)\times SO(2(r-k)l+2)\big)&\ 0\leq k\leq r
\\
P(GL(1)\times GL(1)^*\times SO(rl)\times SO(rl))\text{ if $l$ is even}& k=r+1, r+2\\
P\big(GL(rl+1)\times GL(rl+1)^*\big)\text{ if $l$ is odd}& k=r+1, r+2\,.\end{cases}\\
&&\check G(\chi_k)/\check G(\chi_k)^0\cong\begin{cases}1&k=0,\text{ or $k=r+1$, $l$ odd, $r$ even}\\
\bZ/2\bZ&1\leq k\leq r,\text{ or $k=r+1,r+2$, $l$ odd, $r$ odd}\\
\bZ/4\bZ&\text{$k=r+1,r+2$, $l$ even}
\end{cases}
\eern
\noindent When $0\leq k\leq r$, $\check\theta$ restricts to an order $m$ stable automorphism of each factor of ${\check G(\chi_k)^0}$. Thus $W_{\fa,\chi_k}^{en}=\langle s_1,\ldots,s_{k-1},s_{k-1,k}^{(1)},\tau_k^2\rangle\times\langle s_{k+1},\ldots,s_{r-1},\tau_r\rangle\cong G(m,2,k)\times G(m,1,r-k).$

\noindent When $l$ is even and $k=r+1,r+2$,  $\check\theta|_{\check G(\chi_k)^0}$ can be identified with an order $l$ stable automorphism of type $D_{rl/2}$ or $D_{rl/2}^2$. Thus $W_{\fa,\chi_k}^{en}=\langle s_{ij}^{(2a)},\tau_{i}^4\rangle\cong G(l,2,r)$.

\noindent When $l$ is odd,  and $k=r+1,r+2$,  $\check\theta|_{\check G(\chi_k)^0}$ is an order $2l$ stable (outer) automorphism of type $A_{rl}^2$. Thus $W_{\fa,\chi_k}^{en}=\langle s_1,\ldots,s_{r-1},\tau_{r}^2\rangle\cong G(l,1,r)$.

We have that $\theta$ restricts to an order $2l$ stable (inner) automorphism on $(G_{\tau_i})_{\on{der}}\cong Spin(2l+2)$ and that 
\begin{eqnarray*}
&&I_{s_{ij}^{(k)}}\cong\langle\prod_{a=i}^{j-1}\gamma_a\rangle\text{ (resp. $\cong\langle\gamma\prod_{a=i}^{j-1}\gamma_a\rangle$) }\cong\mu_2,\text{ if $k$ is even (resp. odd)}; \\ 
&&I_{\tau_k}=\langle\gamma_k\cdots\gamma_r\rangle\cong\mu_4,\text{ if $l$ is even};\ I_{\tau_k}=\langle\gamma_k\cdots\gamma_r,\,\gamma_{r+1}\rangle\cong\mu_2\times\mu_2,\text{ if $l$ is odd}.
\end{eqnarray*}
It follows that
$\chi_k|_{I_{s_{ij}^{(a)}}}=1\Leftrightarrow s_{ij}^{(a)}\in W_{\fa,\chi_k}^0$.

 For $0\leq k\leq r$, since $\chi_k|_{I_{\tau_i}}= 1\Leftrightarrow i\geq k+1$, $\chi_k|_{I_{\tau_i}}$, $i\leq k$, is of order 2 when $l$ is even, and $\chi_k(\gamma_{r+1})=1$ when $l$ is odd, the claim on $\cH_{W_{\fa,\chi_k}^0}$ follows from equations~\eqref{poly-order 2},~\eqref{wachi-d12},~\eqref{type D-odd-trivial}, \eqref{type D-odd-order 2}, \eqref{typeD-even-1} and~\eqref{typeD-even-2}.

Suppose $l$ is even. Since $\chi_{r+1}|_{I_{\tau_i}}$ and $\chi_{r+2}|_{I_{\tau_i}}$ are of order $4$, the claim on $\cH_{W_{\fa,\chi_{r+1}}^0}$ and $\cH_{W_{\fa,\chi_{r+2}}^0}$ follows from equations~\eqref{poly-order 2},~\eqref{wachi-D11},~\eqref{wachi-D13} and~\eqref{type D-odd-order 4}.

Suppose $l$ is odd. Since $\chi_{r+1}(\gamma_{r+1})=\chi_{r+2}(\gamma_{r+1})=-1$, the claim on $\cH_{W_{\fa,\chi_{r+1}}^0}$ and $\cH_{W_{\fa,\chi_{r+2}}^0}$ follows from equations~\eqref{poly-order 2},~\eqref{wachi-D11},~\eqref{wachi-D13} and~\eqref{typeD-even-3}.
\end{proof}

\end{document}